\def\llongrightarrow{\relbar\joinrel\relbar\joinrel\relbar\joinrel\rightarrow}
\providecommand{\rarrow}[1]{\stackrel{#1}{\llongrightarrow}}
\def\Xz{\boldsymbol{X}}
\providecommand\X[1]{\boldsymbol{X_{#1}}}
\newtheorem{theorem}{Theorem}
\newtheorem{proposition}[theorem]{Proposition}
\newtheorem{lemma}[theorem]{Lemma}
\newtheorem{define}[theorem]{Definition}
\newtheorem{example}[theorem]{Example}
\newtheorem{remark}[theorem]{Remark}
\numberwithin{theorem}{section}
\providecommand{\blue}[1]{\color{black}{#1}\color{black}\hspace{0pt}}
\providecommand{\bluee}[1]{\color{black}{#1}\color{black}\hspace{0pt}}
\DeclareMathOperator*{\col}{col}
\DeclareMathOperator*{\diag}{diag}
\DeclareMathOperator*{\trace}{trace}
\DeclareMathOperator{\sgn}{sgn}
\def\eps{\varepsilon}
\newenvironment{proof}{{\it Proof :~}}{\hfill$\diamondsuit$\\}
\begin{document}
\begin{frontmatter}

\title{Sign properties of Metzler matrices with applications}


\author{Corentin Briat}
\ead[url]{www.briat.info}
\ead{corentin.briat@bsse.ethz.ch, corentin@briat.info}


\address{Department of Biosystems Science and Engineering, ETH-Z\"{u}rich, Switzerland}


\begin{abstract}
Several results about sign properties of Metzler matrices are obtained. It is first established that checking the sign-stability of a Metzler sign-matrix can be either characterized in terms of the Hurwitz stability of the unit sign-matrix in the corresponding qualitative class, or in terms the negativity of the diagonal elements of the Metzler sign-matrix and the acyclicity of the associated directed graph. Similar results are obtained for the case of Metzler block-matrices and Metzler mixed-matrices, the latter being a class of Metzler matrices containing both sign- and real-type entries. The problem of assessing the sign-stability of the convex hull of a finite and summable family of Metzler matrices is also solved, and a necessary and sufficient condition for the existence of  common Lyapunov functions for all the matrices in the convex hull is obtained. The concept of sign-stability is then generalized to the concept of Ker$_+(B)$-sign-stability, a problem that arises in the analysis of certain jump Markov processes. A sufficient condition for the Ker$_+(B)$-sign-stability  of Metzler sign-matrices is obtained and formulated using inverses of sign-matrices and the concept of $L^+$-matrices. Several applications of the results are discussed in the last section.
\end{abstract}

\begin{keyword}
Sign-stability; Metzler matrices; Positive systems
\MSC[2010] 15A48, 34D05
\end{keyword}

\end{frontmatter}

\section{Introduction}

Metzler matrices \cite{Berman:94} are often encountered in various fields such as economics \cite{Nikaido:68} and biology \cite{Briat:12c,Briat:13i,Parise:14,Briat:15e}.
 These matrices have also been shown to play a fundamental role in the description of linear positive systems, a class of systems that leave the nonnegative orthant invariant \cite{Farina:00}. Due to their particular structure, such systems have been the topic of a lot of works in the dynamical systems and control communities; see e.g. also \cite{Aitrami:07,Briat:11h,Rantzer:11,Tanaka:13a,Colombino:15,Briat:15g,Briat:16c} and references therein.
In this paper, we will be interested in the sign-properties of such matrices, that is, those general properties that can be deduced from the knowledge of the sign-pattern. This problem initially emerged from economics \cite{Quirk:65}, but also found applications in  ecology \cite{May:72,May:73,Roberts:74,Jeffries:74} and chemistry \cite{Tyson:74,Clarke:75}. The rationale for this approach stems from the fact that, in these fields, the interactions between different participants in a given system are, in general, qualitatively known but quantitatively unknown. This incomplete knowledge is very often a direct consequence of the difficulty \bluee{in} identifying and discriminating models because of their inherent complexity and scarce experimental data. In this context, it seems relevant to study the properties of the system solely based on the sign pattern structure or, more loosely, from the pattern of nonzero entries. This is referred to as \emph{qualitative analysis} \cite{Maybee:69}. For instance, the sign-stability of general matrices (the property that all the matrices having the same sign-pattern are all Hurwitz stable) has been studied in \cite{Quirk:65,Jeffries:74,Jeffries:77}. An algorithm, having worst-case $O(n^2)$ time- and space-complexity, verifying the conditions in \cite{Jeffries:77} has been proposed in \cite{Klee:77}. Many other problems have also been addressed over the past decades; see e.g. \cite{Klee:84,Berger:87,Thomassen:89,Brualdi:95,Catral:09} and references therein.

The first problem addressed in this paper is the sign-stability problem for which several general necessary and sufficient conditions exist \cite{Maybee:69}. By adapting these results, we show that the sign-stability of a Metzler sign-matrix can be assessed from the Hurwitz stability of a single particular matrix, referred to as the \emph{unit sign-matrix}, lying inside the qualitative class. Therefore, for this class of matrices, checking the Hurwitz stability of an uncountably infinite and unbounded family of matrices is not more difficult than checking the stability of a given matrix. Lyapunov conditions, taking in this case the form of linear programs \cite{Boyd:04}, can hence be used for establishing the sign-stability of a given Metzler sign-matrix. An alternative condition is formulated in terms of the acyclicity of the graph associated with the sign-pattern, a property that can be easily checked using algorithms such as the Depth-First-Search algorithm \cite{Knuth:97}. This result is then generalized to the case of block matrices for which several results, potentially enabling the use of distributed algorithms, are provided. Sign-stability results are then generalized to the problem of establishing the sign-stability of all the matrices located in the convex hull of a finite number of \bluee{"summable"} Metzler sign-matrices. \blue{This problem is highly connected to the analysis of linear positive switched systems, a problem that has been extensively considered in the literature: see e.g. \cite{Gurvits:07,Mason:07,Fornasini:10}.} Necessary and sufficient conditions for the existence of a common Lyapunov function for all the matrices in the convex-hull of such matrices are also provided. \blue{A novel stability concept, referred to as Ker$_+(B)$-sign-stability, is then considered in order to formulate a constrained stability concept that arises in the analysis and the control of positive nonlinear systems and stochastic reaction networks \cite{Briat:13i,Briat:15e,Briat:16cdc}. It is shown that the obtained sufficient conditions characterizing Ker$_+(B)$-sign-stability of a Metzler sign-matrix can be brought back to a combinatorial problem known to be NP-complete \cite{Lee:98}.} Finally, mixed-matrices consisting of matrices with both sign- and real-type entries are considered and their sign-properties clarified, again in terms of algebraic and graph theoretical conditions analogous to those obtained in the pure sign-matrix case. Several application examples are provided in the last section.

\textbf{Outline:} Section \ref{sec:prel} recalls some important definitions and known results. Section \ref{sec:structmetzler} is devoted to the analysis of the sign-stability of Metzler sign-matrices. The analysis of the sign-stability of block matrices is carried out in Section \ref{sec:structinter}. Conditions for the sign-stability of the convex-hull of Metzler sign-matrices are obtained in Section \ref{sec:hull}. Section \ref{sec:relative} is concerned with relative structural stability analysis whereas Section \ref{sec:partial} addresses partial sign-stability. Application examples are finally discussed in Section \ref{sec:app}.

\textbf{Notations:} The set of real, real positive and real nonnegative numbers are denoted by $\mathbb{R}$, $\mathbb{R}_{>0}$ and $\mathbb{R}_{\ge0}$, respectively. These notations generalize to vectors where $\mathbb{R}_{>0}^n$ and $\mathbb{R}_{\ge0}^n$ are used to denote vectors with positive and nonnegative entries, respectively. \bluee{The sets of integers and nonnegative integers are denoted by $\mathbb{Z}$ and $\mathbb{Z}_{\ge0}$, respectively.} The $n$-dimensional vector of ones is denoted by $\mathds{1}_n$. The block-diagonal matrix with diagonal elements given by $M_i\in\mathbb{R}^{n_i\times m_i}$ is denoted by $\textstyle\diag_i\{M_i\}$.

\section{Preliminaries}\label{sec:prel}

We define in this section the terminology that will be used in the paper as well as some elementary results.
\begin{define}
Let us define the following matrix types and associated sets:
  \begin{itemize}
   \item A \textbf{sign-matrix} is a matrix taking entries in $\mathbb{S}:=\{\ominus,0,\oplus\}$. The set of all $n\times m$ sign-matrices is denoted by $\mathbb{S}^{n\times m}$. The set $\mathbb{S}$ has a full-order structure with the order $\ominus<0<\oplus$. \bluee{In this regard, $\ominus$ is considered here as a negative entry and $\oplus$ as a positive one.}
%
 \item \blue{A \textbf{Metzler matrix} with entries in $\mathbb{R}\cup\mathbb{S}$ is a square matrix having nonnegative off-diagonal entries. The set of all $n\times n$ Metzler matrices with entries in $\mathbb{R}\cup\mathbb{S}$ is denoted by $\mathbb{M}^n:=(\mathbb{R}\cup\mathbb{S})^{n\times n}$.}
 \blue{   \item The cones of $n\times m$ nonnegative matrices with entries in $\mathbb{S}$ or $\mathbb{R}$ are denoted by $\mathbb{S}^{n\times m}_{\ge0}$ and $\mathbb{R}^{n\times m}_{\ge0}$, respectively.}
  \blue{ \item The sets $\mathbb{MS}^n$ and $\mathbb{MR}^n$ are used to denote the sets of Metzler sign-matrices and Metzler real matrices, respectively. The considered orders are the natural one and the one defined above for sign-matrices.}
  \end{itemize}
\end{define}

\begin{define}
  The qualitative class of a matrix $A\in(\mathbb{S}\cup\mathbb{R})^{n\times m}$ is the set of matrices given by
  \begin{equation*}
  \mathcal{Q}(A):=\begin{Bmatrix}
  M\in\mathbb{R}^{n\times m} \hspace{-2mm}& \vline & \hspace{-2mm}[M]_{ij}\left\{\begin{array}{lcl}
    \hspace{-2mm}\in\mathbb{R}_{>0} & \hspace{-2mm}\textnormal{if} & \hspace{-2mm}[A]_{ij}=\oplus\hspace{-2mm}\\
    \hspace{-2mm}\in\mathbb{R}_{<0} & \hspace{-2mm}\textnormal{if} & \hspace{-2mm}[A]_{ij}=\ominus\hspace{-2mm}\\
    \hspace{-2mm}=[A]_{ij} & \multicolumn{2}{l}{${\hspace{-2mm}}\textnormal{otherwise}$\hspace{-2mm}}

  \end{array}\right\}\hspace{-1mm}
  \end{Bmatrix}.
  \end{equation*}
\end{define}

\begin{define}
  Let $M\in\mathbb{S}^{n\times m}$. Then, the unit sign-matrix $U$ associated with $M$, denoted by $U=\sgn(M)$, is defined as
  \begin{equation}
    [U]_{ij}:=\left\{\begin{array}{lcl}
      0 &\textnormal{if}& [S]_{ij}=0,\\
      1 &\textnormal{if}& [S]_{ij}=\oplus,\\
      -1 &\textnormal{if}& [S]_{ij}=\ominus
    \end{array}\right.
  \end{equation}
  where $\sgn(\cdot)$ is the extension of the signum function to sign-matrices.
\end{define}

\begin{define}[\cite{Jeffries:77}]\label{def:graphD}
    For any matrix $A\in(\mathbb{S}\cup\mathbb{R})^{n\times n}$, we define its associated directed graph as $D_A=(V,E_D)$ where ${V=\{1,\ldots,n\}}$ and
    \begin{equation*}
      E_D=\left\{(j,i):\ a_{ij}\ne0, i,j\in V,i\ne j\right\}.
    \end{equation*}
\end{define}

\begin{define}
  A matrix $M\in\mathbb{S}^{n\times n}$ is sign-stable if all the matrices $\blue{M^\prime\in\mathcal{Q}(M)}$ are Hurwitz stable.
\end{define}

\begin{define}
  A matrix $M\in\mathbb{S}^{n\times n}$ is potentially stable if $\mathcal{Q}(M)$ contains at least one matrix that is Hurwitz stable.
\end{define}

\begin{example}
Let us consider the matrices
  \begin{equation}
      M_1=\begin{bmatrix}
        \ominus & \oplus\\
         \oplus & \ominus
      \end{bmatrix}\ \textnormal{and}\ M_2=\begin{bmatrix}
        \ominus & \ominus\\
         \oplus & \ominus
      \end{bmatrix}.
  \end{equation}
  The matrix $M_1$ is not sign-stable since the determinant is not sign definite; i.e. $\det(M_1)=\oplus+\ominus$. Indeed, the entries (1,1), (1,2) and (2,2) being given, we can always find a sufficiently large entry (2,1) such that the determinant is negative and, hence, the matrix not Hurwitz stable. On the other hand, the matrix $M_2$ is sign-stable since $\trace(M_2)=\ominus$ and $\det(M_2)=\oplus$; i.e. the coefficients of the characteristic polynomial are always positive and hence all the matrices in the qualitative class are Hurwitz stable. However, the sign-matrix $\bluee{M_1}$ is potentially sign-stable as some matrices in the qualitative class are Hurwitz stable.
\end{example}

\begin{lemma}[Hurwitz stability of Metzler matrices, \cite{Berman:94}]\label{lem:Metzler_stab}
  Let us consider a matrix $A\in\mathbb{MR}^n$. Then, the following statements are equivalent:
  \begin{enumerate}[(a)]
  \item The matrix $A$ is Hurwitz stable.
  \item $A$ is nonsingular and $A^{-1}\le0$.
  \item There exists a vector $v\in\mathbb{R}^n_{>0}$ such that $v^TA<0$.
\end{enumerate}
\end{lemma}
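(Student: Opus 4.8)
The plan is to prove the three-way equivalence by closing the cycle (a) $\Rightarrow$ (b) $\Rightarrow$ (c) $\Rightarrow$ (a), with the Perron--Frobenius structure of Metzler matrices as the unifying tool. First I would record the basic reduction: since $A$ is Metzler, the matrix $N:=A+sI$ is nonnegative for all $s$ large enough, so the spectrum of $A$ is that of $N$ shifted by $-s$. The Perron--Frobenius theorem applied to $N$ then guarantees that the eigenvalue of $A$ with largest real part, call it $\lambda^\ast$, is \emph{real} and carries nonnegative right and left eigenvectors. Consequently $A$ is Hurwitz stable if and only if $\lambda^\ast<0$, equivalently $\rho(N)<s$. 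This spectral fact is what makes a sign-only analysis possible and will be invoked in both the first and the last implication.

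For (a) $\Rightarrow$ (b), I would use that $-A=sI-N$ with $\rho(N/s)<1$ once $A$ is Hurwitz, so the Neumann series gives $(-A)^{-1}=s^{-1}\sum_{k\ge0}(N/s)^k\ge0$; in particular $-A$ is nonsingular and $A^{-1}=-(-A)^{-1}\le0$. An equivalent route is the integral formula $A^{-1}=-\int_0^{\infty}e^{At}\,dt$, using that a Metzler $A$ generates a nonnegative semigroup $e^{At}\ge0$ and that Hurwitz stability makes the integral converge. For (b) $\Rightarrow$ (c), I would simply exhibit the vector $v:=-A^{-T}\mathds{1}_n$. Then $v^{T}A=-\mathds{1}_n^{T}A^{-1}A=-\mathds{1}_n^{T}<0$ by construction. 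Moreover $v>0$: the $i$-th component of $v$ is the negative of the sum of the $i$-th column of $A^{-1}$, and since $A^{-1}\le0$ and $A$ is nonsingular (so no column of $A^{-1}$ vanishes), each such column sum is strictly negative. This yields $v\in\mathbb{R}^n_{>0}$ with $v^{T}A<0$.

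For (c) $\Rightarrow$ (a), I would pair the given $v$ with the nonnegative right eigenvector $w\ge0$, $w\neq0$, associated with the real dominant eigenvalue $\lambda^\ast$, i.e. $Aw=\lambda^\ast w$. On one hand $v^{T}Aw=\lambda^\ast\,v^{T}w$ with $v^{T}w>0$ (as $v>0$, $w\ge0$, $w\neq0$); on the other hand the entries of the row vector $v^{T}A$ are all negative while $w$ is nonnegative and nonzero, so $v^{T}Aw<0$. Hence $\lambda^\ast<0$, and since $\lambda^\ast$ is the rightmost eigenvalue, $A$ is Hurwitz stable.

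The main obstacle --- really the single substantive ingredient --- is the Perron--Frobenius property, namely the reality of $\lambda^\ast$ together with the nonnegativity of its associated eigenvectors; this is the structural fact that lets the rightmost eigenvalue be controlled by a single real number and by a single nonnegative test vector. Once that is in hand, the remaining work reduces to the Neumann-series (or semigroup-integral) computation in (a) $\Rightarrow$ (b) and the elementary eigenvector-pairing argument in (c) $\Rightarrow$ (a), both of which are routine.
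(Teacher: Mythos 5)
The paper offers no proof of this lemma: it is stated as a known result and attributed to \cite{Berman:94}, so there is no in-paper argument to compare against; what matters is whether your self-contained proof is correct, and it is. The reduction $N:=A+sI\ge0$ for large $s$ is the standard bridge, and you correctly invoke only the weak form of Perron--Frobenius (valid for general, not necessarily irreducible, nonnegative matrices): $\rho(N)$ is an eigenvalue of $N$ with a nonnegative right eigenvector, so the rightmost eigenvalue $\lambda^\ast=\rho(N)-s$ of $A$ is real and carries an eigenvector $w\ge0$, $w\ne0$. Each implication then checks out. For (a)~$\Rightarrow$~(b), Hurwitz stability gives $\rho(N/s)<1$, the Neumann series $(sI-N)^{-1}=s^{-1}\sum_{k\ge0}(N/s)^k\ge0$ converges, and $A^{-1}\le0$ follows; the semigroup alternative $A^{-1}=-\int_0^\infty e^{At}\,dt$ with $e^{At}\ge0$ is equally sound. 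For (b)~$\Rightarrow$~(c), the choice $v=-A^{-T}\mathds{1}_n$ gives $v^TA=-\mathds{1}_n^T<0$ immediately, and your positivity argument for $v$ is complete: the columns of the inverse of a nonsingular matrix cannot vanish, so each column sum of the nonpositive matrix $A^{-1}$ is strictly negative. For (c)~$\Rightarrow$~(a), the pairing $v^TAw=\lambda^\ast v^Tw$ with $v^Tw>0$ and $v^TAw<0$ (every entry of $v^TA$ is negative and $w\ge0$ is nonzero) forces $\lambda^\ast<0$, which suffices precisely because $\lambda^\ast$ is the rightmost eigenvalue. This is essentially the classical M-matrix route of \cite{Berman:94} --- statement (b) is the inverse-nonnegativity characterization of $-A$ being a nonsingular M-matrix --- so your argument reconstructs the cited proof rather than deviating from it; no gaps.
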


\section{Sign-stability of Metzler sign-matrices}\label{sec:structmetzler}

The sign-stability of matrices is a problem that has been extensively studied and for which \bluee{a complete solution exists. Such a complete solution is, for instance, given in \cite[Theorem 2]{Jeffries:77} where a set of necessary and sufficient conditions is provided in terms of  algebraic and graph theoretical concepts.} We propose here to study the particular case of Metzler sign-matrices and derive a collection of conditions that are more tailored to that kind of matrices than the general ones stated in \cite{Jeffries:77}. These conditions will be shown to be simpler than those in \cite{Jeffries:77} from both a theoretical and \blue{a computational} viewpoint. It will indeed be proved that the sign-stability of a Metzler sign-matrix can be established by checking the Hurwitz stability of a single matrix in the associated qualitative class. From a computational viewpoint, we also demonstrate that the Depth-First Search (DFS) algorithm \cite{Knuth:97} can be used in order to establish the sign-stability of the matrix. This algorithm has worst-case time- and space-complexity of $O(n^2)$ and $O(n)$, respectively. 

\subsection{Preliminary results}

The following results will play a key role in the derivation of the main results of the section.
\begin{lemma}[\cite{Farina:00}]\label{lem:diag}
  Let $A\in\mathbb{MR}^n$. If $A$ is Hurwitz stable, then $A$ has negative diagonal elements.
\end{lemma}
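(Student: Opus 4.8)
The plan is to derive the claim directly from the positive-left-eigenvector characterization of Hurwitz stability for Metzler matrices, namely statement (c) of Lemma~\ref{lem:Metzler_stab}. The key conceptual observation is that the trace condition alone is too weak for this purpose: the sum of the diagonal entries equals the sum of the eigenvalues and is therefore negative, but this controls the diagonal entries only collectively, not individually. The Lyapunov vector, by contrast, couples each diagonal entry to the nonnegative off-diagonal structure in a way that isolates its sign, which is exactly what is needed here.

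First I would use the Hurwitz stability of $A$ together with Lemma~\ref{lem:Metzler_stab} to produce a vector $v\in\mathbb{R}^n_{>0}$ satisfying $v^TA<0$, i.e. every component of the row vector $v^TA$ is strictly negative. Fixing an arbitrary index $i\in\{1,\dots,n\}$, I would then write out the $i$-th component as $[v^TA]_i=v_ia_{ii}+\sum_{j\ne i}v_ja_{ji}$, separating the diagonal contribution from the off-diagonal ones.

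The decisive step is the sign bookkeeping. Because $A$ is Metzler, every off-diagonal entry satisfies $a_{ji}\ge0$, and since $v_j>0$ for all $j$, the off-diagonal sum $\sum_{j\ne i}v_ja_{ji}$ is nonnegative. Combining this with $[v^TA]_i<0$ yields $v_ia_{ii}=[v^TA]_i-\sum_{j\ne i}v_ja_{ji}<0$, and dividing by $v_i>0$ gives $a_{ii}<0$. As $i$ was arbitrary, all diagonal entries are negative, which is the assertion.

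There is no genuine obstacle once Lemma~\ref{lem:Metzler_stab} is available; the only point requiring care is the direction of the inequalities and the recognition that the nonnegative off-diagonal terms can only push $v_ia_{ii}$ further below zero, never rescue it. Should one wish to avoid invoking Lemma~\ref{lem:Metzler_stab}, an alternative route would go through the spectral abscissa: a Metzler matrix attains its spectral abscissa at a real eigenvalue (Perron--Frobenius applied to $A+cI$ for $c$ sufficiently large), and this abscissa is monotone under passing to principal submatrices, so $a_{ii}=\lambda_{\max}([a_{ii}])\le\lambda_{\max}(A)<0$. The harder part of this second approach is establishing the monotonicity of the spectral abscissa under principal submatrices, which is precisely why routing through the already-stated Lemma~\ref{lem:Metzler_stab} is the cleaner choice.
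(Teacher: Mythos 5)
Your proof is correct. Note that the paper itself offers no proof of Lemma~\ref{lem:diag}: it is imported verbatim from \cite{Farina:00}, so there is no internal argument to compare against and your attempt must be judged on its own merits. It holds up. Lemma~\ref{lem:Metzler_stab}(c) supplies $v\in\mathbb{R}^n_{>0}$ with $v^TA<0$, and your componentwise bookkeeping $[v^TA]_i=v_ia_{ii}+\sum_{j\ne i}v_ja_{ji}$, combined with $a_{ji}\ge0$ (Metzler) and $v_j>0$, forces $v_ia_{ii}\le[v^TA]_i<0$ and hence $a_{ii}<0$; the inequality directions are all handled correctly. There is also no circularity in leaning on Lemma~\ref{lem:Metzler_stab}: that lemma is quoted from \cite{Berman:94}, and its standard proofs (via $-A^{-1}\ge0$ or the resolvent/exponential representation) do not pass through the present statement. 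Your opening observation that the trace argument is too weak--it controls the diagonal only in aggregate--is exactly the right diagnostic, and your sketched alternative via the spectral abscissa ($a_{ii}\le\lambda_{\max}(A)<0$, using Perron--Frobenius on $A+cI$ and monotonicity of the Perron root under principal submatrices) is likewise sound and is essentially the textbook route in \cite{Berman:94}; as you note, it requires the extra monotonicity lemma, so within this paper the derivation from Lemma~\ref{lem:Metzler_stab}(c) is the cleaner choice, and it has the additional virtue of using precisely the linear-programming characterization the paper exploits throughout (e.g.\ in Theorem~\ref{th:struct} and Theorem~\ref{th:inter}).
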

%
%
\begin{lemma}\label{lem:maxcycle}
  Let us consider a matrix $M\in\mathbb{MS}^n$ with negative diagonal elements and assume that its associated directed graph $D_M$ is \bluee{an elementary cycle}. Then, the matrix $\sgn(M)$ is irreducible and its Perron-Frobenius eigenvalue, denoted by $\lambda_{PF}(\sgn(M))$,  is equal to 0 with $\mathds{1}_n$ as associated right-eigenvector.
\end{lemma}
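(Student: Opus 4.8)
The plan is to first pin down the exact structure of the unit sign-matrix $U=\sgn(M)$ that is forced by the hypotheses, then read off irreducibility and an explicit eigenpair, and finally invoke Perron--Frobenius theory for Metzler matrices to identify that eigenvalue as the dominant one.

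First I would argue that, because $M\in\mathbb{MS}^n$ is Metzler, every off-diagonal entry lies in $\{0,\oplus\}$, while the assumption of negative diagonal forces every diagonal entry to be $\ominus$. Hence $U$ carries $-1$ on the diagonal and entries in $\{0,1\}$ off the diagonal. The hypothesis that $D_M$ is an elementary cycle on the whole vertex set $V=\{1,\dots,n\}$ means every vertex has in-degree and out-degree exactly one; since an edge $(j,i)\in E_D$ corresponds precisely to a nonzero off-diagonal entry $a_{ij}$, this says that each row, and each column, of $U$ contains exactly one off-diagonal $+1$. Up to a relabelling of the vertices along the cycle, $U$ is therefore permutation-similar to $P-I$, where $P$ is the $n\times n$ cyclic permutation matrix.

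Next, irreducibility is immediate: the digraph governing reducibility of $U$ is exactly $D_M$ (or its reverse, which is again an elementary cycle), and a single spanning cycle is strongly connected, so $U$ is irreducible. For the eigenpair I would simply compute row sums: each row of $U$ has one $-1$ and one $+1$, whence $U\mathds{1}_n=0$, i.e. $0$ is an eigenvalue of $\sgn(M)$ with right-eigenvector $\mathds{1}_n$.

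It then remains to show that $0$ is the Perron--Frobenius (dominant) eigenvalue. Here I would use that $U$ is an irreducible Metzler matrix, so that, applying the Perron--Frobenius theorem to the nonnegative matrix $U+\alpha I$ for $\alpha$ large enough, the eigenvalue of $U$ with largest real part is real, simple, and is the unique eigenvalue admitting a strictly positive eigenvector. Since $\mathds{1}_n>0$ is an eigenvector for the eigenvalue $0$, that eigenvalue must be the dominant one, giving $\lambda_{PF}(\sgn(M))=0$. The only delicate point is this last identification, where the strict positivity of $\mathds{1}_n$ together with the uniqueness part of Perron--Frobenius is essential; as a sanity check one can also read off the spectrum explicitly from the circulant form $P-I$, namely $\{e^{2\pi\mathrm{i}k/n}-1:k=0,\dots,n-1\}$, whose real parts $\cos(2\pi k/n)-1$ are all $\le 0$ and vanish only at $k=0$.
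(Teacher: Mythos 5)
Your proof is correct and takes essentially the same route as the paper's: reduce $\sgn(M)$ (up to relabelling of the vertices) to $P-I_n$ with $P$ a cyclic permutation matrix, get irreducibility from the strong connectivity of the cycle, compute $\sgn(M)\mathds{1}_n=0$ from the row sums, and invoke the Perron--Frobenius theorem to identify $0$ as $\lambda_{PF}(\sgn(M))$. You are merely more explicit than the paper on the final identification step (shifting to the nonnegative matrix $U+\alpha I$ and using that only the dominant eigenvalue admits a strictly positive eigenvector), which is a welcome but not substantively different elaboration.
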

\begin{proof}
Since $D_M$ is an elementary cycle, then $\sgn(M)$ can be decomposed as $\sgn(M)=P-I_n$ where $P$ is a \blue{cyclic permutation matrix} with zero entries on the diagonal. Hence, $\sgn(M)$ is irreducible since $P$ is irreducible. Furthermore, it is immediate to see that $\sgn(M)\mathds{1}=0$ and hence that 0 is an eigenvalue of $\sgn(M)$ with positive right-eigenvector equal to $\mathds{1}$. From the irreducibility property and the Perron-Frobenius theorem \cite{Berman:94}, we have that this eigenvalue is the Perron-Frobenius one and that it is unique. The proof is complete.
\end{proof}
\begin{lemma}\label{lem:nocycle}
  Let us consider a matrix $M\in\mathbb{MR}^n\cap\{-1,0,1\}^{n\times n}$. Then, the following statements are equivalent:
  \begin{enumerate}[(a)]
    \item $M$ is Hurwitz stable \blue{(in the sense that $M$ interpreted as a real matrix is Hurwitz stable)}.
    \item The diagonal elements of $M$ are negative and the directed graph $D_M$ \bluee{is acyclic}.
  \end{enumerate}
\end{lemma}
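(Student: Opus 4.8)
The plan is to prove the two implications separately, exploiting the fact that the graph $D_M$ encodes only the off-diagonal sign pattern, so that the diagonal condition and the acyclicity condition in (b) can be handled independently.

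For the implication (b) $\Rightarrow$ (a), observe that since $M\in\{-1,0,1\}^{n\times n}$ is Metzler its off-diagonal entries lie in $\{0,1\}$, and acyclicity of $D_M$ means exactly that these entries admit a topological ordering of the vertices. I would invoke the standard characterization that a digraph is acyclic if and only if there is a permutation matrix $\Pi$ rendering $\Pi M\Pi^\T$ triangular: reordering the vertices into a topological order places every off-diagonal nonzero entry strictly on one side of the diagonal. Since this similarity transformation preserves both the spectrum and the diagonal, the eigenvalues of $M$ coincide with its diagonal entries, each equal to $-1$ by hypothesis, and hence $M$ is Hurwitz stable. This direction is essentially bookkeeping.

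For the implication (a) $\Rightarrow$ (b), Hurwitz stability immediately gives negative diagonal entries by Lemma~\ref{lem:diag} (all equal to $-1$, given the entry constraint), so only acyclicity remains. I would prove it by contraposition: assuming $D_M$ contains a cycle, I exhibit a principal submatrix that is not Hurwitz. First I would select an elementary cycle of minimum length and argue that it is chordless, since any chord would close a strictly shorter elementary cycle, contradicting minimality. Letting $S\subseteq V$ be its vertex set, this induced property guarantees that the directed graph of the principal submatrix $M_S$ is precisely this elementary cycle. As $M_S$ is then Metzler with negative diagonal and $D_{M_S}$ is an elementary cycle, Lemma~\ref{lem:maxcycle} applies to the associated sign pattern and shows that $M_S$ (which equals its own unit sign-matrix) has Perron--Frobenius eigenvalue $0$; in particular $M_S$ is not Hurwitz stable.

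To close the contradiction I need that every principal submatrix of a Hurwitz Metzler matrix is again Hurwitz, which I would derive directly from statement (c) of Lemma~\ref{lem:Metzler_stab}. If $v\in\mathbb{R}^n_{>0}$ satisfies $v^\T M<0$, then for each $j\in S$ one has $\sum_{i\in S}v_i[M]_{ij}=\sum_i v_i[M]_{ij}-\sum_{i\notin S}v_i[M]_{ij}$, where the subtracted sum is nonnegative because $M$ is Metzler and $j\in S$ forces $i\ne j$; hence $v_S^\T M_S<0$ with $v_S>0$, and Lemma~\ref{lem:Metzler_stab} certifies that $M_S$ is Hurwitz. This contradicts the previous paragraph and forces $D_M$ to be acyclic, completing the proof. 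I expect the main obstacle to be the combinatorial reduction to a chordless cycle, ensuring that the principal submatrix has a digraph that is exactly an elementary cycle so that Lemma~\ref{lem:maxcycle} is applicable, rather than the stability inheritance, which is a short consequence of the Lyapunov-vector characterization.
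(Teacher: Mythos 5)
Your proof is correct, and while your argument for (b) $\Rightarrow$ (a) coincides with the paper's (topological order, conjugation by a permutation to triangular form, eigenvalues read off the diagonal), your proof of (a) $\Rightarrow$ (b) takes a genuinely different route. The paper also reduces to an elementary cycle but stays in dimension $n$: it constructs a minorant $\widetilde{M}\le M$ whose graph retains only the cycle edges, so that $\widetilde{M}$ is a permutation of $\begin{bmatrix} P-I_k & 0\\ 0 & -I_{n-k}\end{bmatrix}$, and then invokes monotonicity of the Perron--Frobenius eigenvalue for Metzler matrices to conclude $\lambda_{PF}(M)\ge\lambda_{PF}(\widetilde{M})=0$ via Lemma \ref{lem:maxcycle}; note that this comparison needs no chordlessness, since any extra edges of $M$ only increase entries relative to $\widetilde{M}$. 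You instead pass to the principal submatrix $M_S$ on the vertex set of a minimum-length cycle, and your combinatorial reduction is sound: a chord $v_i\to v_j$ with $j\ne i+1$ closes a cycle of length $\left((i-j)\bmod k\right)+1<k$, self-loops being excluded from $D_M$ by definition, so $D_{M_S}$ is exactly an elementary cycle and $M_S=P-I_k$ by the entry constraint, whence $\lambda_{PF}(M_S)=0$ by Lemma \ref{lem:maxcycle}. The contradiction then rests on your inheritance step --- principal submatrices of Hurwitz stable Metzler matrices are Hurwitz stable --- which you correctly derive from statement (c) of Lemma \ref{lem:Metzler_stab} by discarding the nonnegative contributions $v_i[M]_{ij}$ with $i\notin S$, $j\in S$. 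The trade-off is clear: the paper's minorant comparison is shorter and avoids the combinatorial cleanup (it even sidesteps your minimality argument entirely, and handles multiple cycles by simply deleting all non-cycle edges), but it leans on Perron--Frobenius monotonicity under the entrywise order; your route replaces that with the submatrix-stability lemma, a fact of independent interest that is specific to Metzler matrices (it fails for general Hurwitz matrices) and follows elementarily from the Lyapunov-vector characterization, at the cost of the chordless reduction needed to make Lemma \ref{lem:maxcycle} applicable.
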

\begin{proof}
\textbf{Proof of (a) $\Rightarrow$ (b).} We use the contrapositive.  We have two cases. The first one is when at least one of the diagonal entries is nonnegative and, in this case, Lemma \ref{lem:diag} proves the implication. The second one is when there exists at least a cycle in the graph $D_M$. Let us assume that there is a single cycle of order $k\ge2$ in $D_M$ (the case of multiple cycles can be addressed in the same way). Define the matrix $\widetilde{M}\in\mathbb{MR}^n\cap\{-1,0,1\}^{n\times n}$ with negative diagonal elements and for which the graph $D_{\widetilde{M}}$ is obtained from $D_M$ by removing all the edges that are not involved in the cycle. Then, $\widetilde{M}$ is a cyclic permutation of the matrix
\begin{equation}
\begin{bmatrix}
    P-I_k & 0\\
  0 & -I_{n-k}
\end{bmatrix}
\end{equation}
where $P$ is a \blue{cyclic permutation matrix} with diagonal elements equal to 0. The left-upper block then corresponds to the nodes involved in the cycle whereas the right-lower block corresponds to those that are not. Clearly, we have that $M\ge\widetilde{M}$ (in the same permutation basis). Hence, from the Perron-Frobenius theorem and the theory of nonnegative/Metzler matrices \cite{Berman:94}, we have that $\lambda_{PF}(M)\ge\lambda_{PF}(\widetilde{M})$. Again due to the Perron-Frobenius theorem and Lemma \ref{lem:maxcycle}, we have that $\lambda_{PF}(\widetilde{M})=\lambda_{PF}(P-I_k)=0$ and hence  $\lambda_{PF}(M)\ge0$, proving then that $M$ is not Hurwitz stable.

\textbf{Proof of (b) $\Rightarrow$ (a).} Assume that statement (b) holds. Since the graph $D_M$ is acyclic, then there exists a permutation matrix $P$ such that the matrix $P^TMP$ is upper-triangular with negative elements on the diagonal and nonnegative elements in the upper-triangular part. Therefore, the matrix $M$ is Hurwitz stable since its eigenvalues coincide with its diagonal elements, which are negative. The proof is complete.
\end{proof}

\subsection{Main results}

We can now state our main result on the sign-stability of Metzler sign-matrices:
\begin{theorem}\label{th:struct}
  Let $A\in\mathbb{MS}^{n\times n}$. Then, the following statements are equivalent:
  \begin{enumerate}[(a)]
    \item\label{stat:struct:1} The matrix $A$ is sign-stable.
    \item\label{stat:struct:3} The matrix $\sgn(A)$ is Hurwitz stable.
    \item\label{stat:struct:4} There exists $v\in\mathbb{R}_{>0}^n$ such that $v^T\sgn(A)<0$.
    \item\label{stat:struct:5} The directed graph $D_A$ is acyclic and the diagonal elements of $A$ are negative.
    \item\label{stat:struct:2} There exists a permutation matrix $P$ such that $P^T AP$ is upper-triangular with negative diagonal elements.
    \item\label{stat:struct:6} The diagonal elements of $A$ are negative and the DFS algorithm applied to the graph $D_A$ returns a spanning tree with no back edge\footnote{The spanning tree is the tree generated by the DFS algorithm while searching the graph and a back edge is an edge that points from a node to one of its ancestors in the spanning tree \cite{Knuth:97}.}.
  \end{enumerate}
\end{theorem}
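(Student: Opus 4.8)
The plan is to prove all six equivalences through a single cycle of implications, (a) $\Rightarrow$ (b) $\Rightarrow$ (d) $\Rightarrow$ (e) $\Rightarrow$ (a), and then to attach the two remaining statements (c) and (f) as side-equivalences to (b) and (d), respectively. The structural observation that makes everything work is that the directed graph $D_A$ depends only on the nonzero pattern of $A$, so it coincides with the graph $D_{\sgn(A)}$ of the unit sign-matrix; likewise the diagonal entries of $A$ are negative exactly when those of $\sgn(A)$ equal $-1$. Since $\sgn(A)$ is a real Metzler matrix lying in $\mathbb{MR}^n\cap\{-1,0,1\}^{n\times n}$, this lets me transfer the two preliminary lemmas directly to it.

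First, for (a) $\Rightarrow$ (b) I would note that $\sgn(A)\in\mathcal{Q}(A)$: every $\oplus$ becomes $+1>0$, every $\ominus$ becomes $-1<0$, and every $0$ entry is preserved, which is exactly the membership requirement. Hence sign-stability of $A$ forces the particular member $\sgn(A)$ to be Hurwitz stable. The equivalence (b) $\Leftrightarrow$ (c) is then immediate from Lemma \ref{lem:Metzler_stab} applied to the Metzler matrix $\sgn(A)$, and the equivalence (b) $\Leftrightarrow$ (d) is immediate from Lemma \ref{lem:nocycle} applied to $\sgn(A)$, once the identities $D_{\sgn(A)}=D_A$ and the diagonal-sign correspondence are invoked. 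For (d) $\Leftrightarrow$ (e) I would use the standard fact that a directed graph is acyclic if and only if its vertices admit a topological ordering, which is precisely a permutation $P$ rendering $P^TAP$ triangular, the negative-diagonal condition being carried along unchanged. To close the cycle with (e) $\Rightarrow$ (a), I would observe that triangularity and the negative diagonal are \emph{qualitative} properties: for every $M\in\mathcal{Q}(A)$ the matrix $P^TMP$ has the same zero pattern, hence remains triangular, with diagonal entries inheriting the sign $\ominus$ and therefore strictly negative, so that the eigenvalues of $M$ are its negative diagonal entries and $M$ is Hurwitz stable. Finally, (d) $\Leftrightarrow$ (f) is just the algorithmic restatement of acyclicity: a depth-first search produces a back edge if and only if the explored digraph contains a directed cycle \cite{Knuth:97}.

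I expect the main subtlety to be not any single hard computation, since most steps reduce to invoking Lemmas \ref{lem:Metzler_stab} and \ref{lem:nocycle}, but rather the conceptual heart of the statement, namely that stability of the \emph{entire} uncountable family $\mathcal{Q}(A)$ collapses to stability of the single matrix $\sgn(A)$. This is secured from one side by the membership $\sgn(A)\in\mathcal{Q}(A)$ and from the other by the preservation of the triangular form across the whole qualitative class in the step (e) $\Rightarrow$ (a). A secondary point requiring care is matching the orientation convention of Definition \ref{def:graphD} (an edge $(j,i)$ whenever $a_{ij}\ne0$) with the triangularization in (d) $\Leftrightarrow$ (e), so that acyclicity yields an \emph{upper}-triangular form in the stated permutation basis rather than a lower-triangular one.
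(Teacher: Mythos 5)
Your proposal is correct and takes essentially the same route as the paper's own proof: the cycle (a) $\Rightarrow$ (b) $\Leftrightarrow$ (d) $\Leftrightarrow$ (e) $\Rightarrow$ (a) resting on Lemma \ref{lem:Metzler_stab} for (b) $\Leftrightarrow$ (c), Lemma \ref{lem:nocycle} for (b) $\Leftrightarrow$ (d), topological ordering for (d) $\Leftrightarrow$ (e), and the back-edge characterization of acyclicity for (d) $\Leftrightarrow$ (f). The only difference is that you spell out the steps the paper declares obvious, namely the membership $\sgn(A)\in\mathcal{Q}(A)$ for (a) $\Rightarrow$ (b) and the invariance of the permuted triangular zero-pattern across all of $\mathcal{Q}(A)$ for (e) $\Rightarrow$ (a), together with the identification $D_{\sgn(A)}=D_A$; these are correct clarifications, not a different argument.
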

\blue{\begin{proof}
The proofs that \eqref{stat:struct:1} implies \eqref{stat:struct:3} and that \eqref{stat:struct:2} implies \eqref{stat:struct:1} are obvious. The proof that \eqref{stat:struct:3} is equivalent to \eqref{stat:struct:4} follows from Lemma \ref{lem:Metzler_stab}. The proof that \eqref{stat:struct:5} is equivalent to \eqref{stat:struct:3} follows from Lemma \ref{lem:nocycle}. The proof that \eqref{stat:struct:5} is equivalent to \eqref{stat:struct:2} follows from the definition of an acyclic graph and, finally, the proof that \eqref{stat:struct:5} is equivalent to \eqref{stat:struct:6} follows from the property that a graph is acyclic if and only if one of its associated spanning trees has no back edge; see  \cite{Knuth:97}.
\end{proof}}

\begin{remark}
  The condition of statement \eqref{stat:struct:3} in Theorem \ref{th:struct} states that it is enough to check the Hurwitz stability of the unique unit sign-matrix inside the qualitative class in order to establish the sign-stability of the sign-matrix and the Hurwitz stability of the entire qualitative class. It is important to stress that this condition is peculiar to Metzler matrices as demonstrated by the following example. Indeed, the matrix
\begin{equation}
  A=\begin{bmatrix}
    \ominus & \oplus & \oplus\\
    \ominus & \ominus & \ominus\\
    0 & \oplus & \ominus
  \end{bmatrix}
\end{equation}
is not sign-stable because of the presence of a cycle of order 3 in its directed graph. However, the associated unit sign-matrix given by
\begin{equation}
  \sgn(A)=\begin{bmatrix}
  -1 & 1 & 1\\
    -1 & -1 & -1\\
    0 & 1 & -1
  \end{bmatrix}
\end{equation}
is Hurwitz stable since $\sgn(A)+\sgn(A)^T$ is negative definite.
\end{remark}

\blue{\begin{remark}
The proof that \eqref{stat:struct:1} is equivalent to \eqref{stat:struct:5} in Theorem \ref{th:struct} could have been based on the general sign-stability result \cite[Theorem 2]{Jeffries:77} that involves both algebraic and graph theoretical conditions. By applying them to the particular case of Metzler sign-matrices, we obtain that a Metzler sign-matrix is sign-stable if and only if the diagonal elements are negative and its directed graph does not contain any cycle. \bluee{For general matrices, cycles of length 2 are still allowed provided that the products of the entries involved in the same cycle are negative. This is clearly not possible in the context of Metzler matrices since all the off-diagonal entries are nonnegative.}
\end{remark}}

\begin{remark}[Computational complexity]
  The statement \eqref{stat:struct:4} indicates that the Hurwitz stability of the real Metzler matrix $\sgn(A)$ can be easily checked since the problem of finding a vector $v>0$ such that $v^T\sgn(A)<0$ is a linear programming problem \cite{Boyd:04} that can be solved using modern optimization algorithms in an efficient way. By virtue of statement (f), an alternative way for checking the sign-stability of a Metzler sign-matrix is through the negativity of its diagonal elements and the acyclicity of its associated directed graph. Indeed, checking the negativity of the diagonal elements and the existence of a cycle in a given graph are both simple problems, the latter being solvable using algorithms such as the Depth-First-Search algorithm \cite{Knuth:97} which has a worst-case time-complexity of $O(n^2)$ and a worst-case space-complexity of $O(n)$. Complexity-wise, the general conditions of  \cite[Theorem 2]{Jeffries:77} can be checked using the algorithm proposed in \cite{Klee:77} which has worst-case time- and space-complexity of $O(n^2)$.
\end{remark}

For completeness, we provide several necessary conditions for the sign-stability of Metzler sign-matrices:
\begin{proposition}[Necessary conditions]
The Metzler sign-matrix $A\in\mathbb{MS}^n$ is not sign-stable if one of the following statements hold:
\begin{enumerate}[(a)]
  \item  $[A]_{ii}\ge0$ for some $i=1,\ldots,n$.
  \item $[A]_{ij}[A]_{ji}>0$ for some $i,j=1,\ldots,n$, $i\ne j$.
  \item There is a cycle in the directed graph $D_A$.
  \item The matrix $A$ is irreducible.
\end{enumerate}
\end{proposition}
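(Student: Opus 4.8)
The plan is to deduce every implication from the characterization of sign-stability established in Theorem~\ref{th:struct}, specifically the equivalence between sign-stability and statement \eqref{stat:struct:5}: a matrix $A\in\mathbb{MS}^n$ is sign-stable if and only if its directed graph $D_A$ is acyclic and all of its diagonal entries are negative. Consequently, $A$ fails to be sign-stable as soon as one diagonal entry is nonnegative or $D_A$ contains a cycle, and the whole proposition reduces to checking that each of (a)--(d) triggers one of these two obstructions. I would therefore treat (a) as the ``diagonal'' obstruction and (b), (c), (d) as three ways of forcing the ``cycle'' obstruction.

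For (a), if $[A]_{ii}\ge0$ for some $i$, the negative-diagonal requirement of \eqref{stat:struct:5} is immediately violated, so $A$ is not sign-stable; alternatively this follows directly from Lemma~\ref{lem:diag}, since every matrix in $\mathcal{Q}(A)$ then carries a nonnegative $(i,i)$ entry and hence cannot be Hurwitz stable. Statement (c) is equally direct: a cycle in $D_A$ contradicts the acyclicity half of \eqref{stat:struct:5}, so again $A$ is not sign-stable.

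Next I would reduce (b) to (c). Since $A$ is Metzler, its off-diagonal entries lie in $\{0,\oplus\}$, so the product $[A]_{ij}[A]_{ji}$ can be strictly positive only when both factors equal $\oplus$; this places the edges $(i,j)$ and $(j,i)$ simultaneously in $E_D$, producing a cycle of length two and hence the obstruction of (c). Finally I would reduce (d) to (c) as well: if $A$ is irreducible (with $n\ge2$), then $D_A$ is strongly connected, so any two distinct nodes are joined by directed paths in both directions, which necessarily yields a cycle.

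The argument is essentially bookkeeping layered on top of Theorem~\ref{th:struct}, so no step is genuinely hard. The two points deserving care are the sign-algebra observation in (b)---that positivity of a product of Metzler sign-entries forces both factors to be $\oplus$, the only mechanism by which a $2$-cycle can arise here---and, in (d), the standard equivalence between irreducibility and strong connectivity of $D_A$, which forces a cycle precisely when $n\ge2$; the degenerate case $n=1$, in which a single $\ominus$ entry is both trivially irreducible and sign-stable, should be excluded or subsumed under the convention that irreducibility presupposes $n\ge2$.
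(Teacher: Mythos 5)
Your proof is correct and follows essentially the same route as the paper's own (very terse) proof: statement (a) via Lemma \ref{lem:diag}, and (b)--(d) by exhibiting a cycle in $D_A$, which contradicts the acyclicity characterization of sign-stability in Theorem \ref{th:struct}. Your explicit sign-algebra argument for (b), the irreducibility/strong-connectivity reduction for (d), and the flagged degenerate case $n=1$ simply fill in details that the paper's one-line proof leaves implicit.
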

\begin{proof}
  Statement (a) follows from Lemma \ref{lem:diag} while the others follow from the existence of at least one cycle in the directed graph associated with the sign-matrix.
\end{proof}

We conclude this section with a result on the potential sign-stability of Metzler sign-matrices:
\begin{theorem}[Potential sign-stability]
  Let us consider a Metzler sign-matrix $A\in\mathbb{MS}^n$. Then, the following statements are equivalent:
  \begin{enumerate}[(a)]
    \item The diagonal entries of $A$ are negative.
    \item The matrix $A$ is potentially sign-stable.
  \end{enumerate}
\end{theorem}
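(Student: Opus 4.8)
The plan is to establish the equivalence by proving the two implications separately, the reverse direction being essentially immediate and the forward direction requiring the explicit construction of a Hurwitz-stable member of the qualitative class. Throughout I would exploit the fact that, since $A$ is Metzler, every matrix $M\in\mathcal{Q}(A)$ is itself a Metzler real matrix: its off-diagonal entries are either zero or strictly positive (there are no $\ominus$ entries off the diagonal), so Lemmas \ref{lem:diag} and \ref{lem:Metzler_stab} apply verbatim to any such $M$.

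For $(b)\Rightarrow(a)$ I would argue directly from Lemma \ref{lem:diag}. If $A$ is potentially sign-stable, then by definition there is some $M\in\mathcal{Q}(A)$ that is Hurwitz stable; being a Hurwitz-stable Metzler real matrix, Lemma \ref{lem:diag} forces $[M]_{ii}<0$ for every $i$. By the definition of $\mathcal{Q}(A)$, the sign of $[M]_{ii}$ matches $[A]_{ii}$, so $[M]_{ii}<0$ is compatible only with $[A]_{ii}=\ominus$. Hence all diagonal entries of $A$ are negative, which is statement (a).

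For $(a)\Rightarrow(b)$ the task is to produce a single Hurwitz-stable witness inside $\mathcal{Q}(A)$, and the natural device is diagonal dominance. Assuming $[A]_{ii}=\ominus$ for all $i$, I would build $M\in\mathcal{Q}(A)$ by fixing the off-diagonal entries at modest values, setting $[M]_{ij}=1$ whenever $[A]_{ij}=\oplus$ and $[M]_{ij}=0$ whenever $[A]_{ij}=0$, and choosing the (necessarily negative) diagonal entries as $[M]_{ii}=-d$ with a common, sufficiently large $d>0$. To verify Hurwitz stability I would invoke Lemma \ref{lem:Metzler_stab}(c) with the candidate vector $v=\mathds{1}_n>0$: the $j$-th entry of $v^TM$ is the column sum $-d+\sum_{i\ne j}[M]_{ij}$, and since each off-diagonal term is at most $1$ this sum is bounded above by $-d+(n-1)$. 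Choosing any $d>n-1$ then makes $v^TM<0$ componentwise, so by Lemma \ref{lem:Metzler_stab} the Metzler matrix $M$ is Hurwitz stable; as $M\in\mathcal{Q}(A)$ by construction, $A$ is potentially sign-stable.

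There is no genuinely hard step here: the only point requiring a little care is the construction in $(a)\Rightarrow(b)$, namely checking that the column sums can be driven strictly negative uniformly while respecting the prescribed sign pattern. The uniform choice of $d$ together with the test vector $\mathds{1}_n$ handles this cleanly, so the argument reduces to a one-line column-sum estimate rather than any spectral computation.
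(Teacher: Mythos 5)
Your proposal is correct, and the backward direction $(b)\Rightarrow(a)$ is exactly the paper's argument via Lemma \ref{lem:diag}. The forward direction, however, verifies stability by a genuinely different mechanism. The paper fixes the diagonal at $-1$, sets every $\oplus$ entry to a small $\eps>0$, observes that $M_0=-I_n$ has all eigenvalues equal to $-1$, and invokes continuity of eigenvalues in $\eps$ to conclude that $M_\eps\in\mathcal{Q}(A)$ is Hurwitz for $\eps$ small enough --- a soft perturbation argument with no explicit threshold. You instead fix the off-diagonal entries at $1$ and push the diagonal down to $-d$, certifying stability through Lemma \ref{lem:Metzler_stab}(c) with the explicit vector $v=\mathds{1}_n$ and the column-sum bound $-d+(n-1)<0$ for $d>n-1$. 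The two constructions are dual scalings of the same one-parameter idea (and both stay inside $\mathcal{Q}(A)$ since only the sign pattern is constrained), but your version buys something concrete: an explicit linear Lyapunov certificate and a quantitative threshold, with no appeal to spectral continuity, and it leans entirely on the Metzler-specific characterization already stated in the paper rather than on a general perturbation fact. One small point worth noting, which you handle correctly but implicitly: the Metzler structure guarantees all $\ominus$ entries of $A$ sit on the diagonal, so your assignment of off-diagonal values in $\{0,1\}$ exhausts all off-diagonal cases; the paper's construction likewise relies on this to get $M_0=-I_n$.
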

\begin{proof}
  \textbf{Proof of 2) $\Rightarrow$ 1)} Since the matrix $A$ is Metzler and is partially sign-stable, then there exists a matrix \blue{$A'\in\mathcal{Q}(A)$} that is Hurwitz stable. From Lemma \ref{lem:diag}, this matrix necessarily has negative diagonal elements and, hence, $A$ must have negative diagonal elements.

  \textbf{Proof of 1) $\Rightarrow$ 2)} Assume the matrix $A$ has negative diagonal entries. Let us then consider the matrix $M_{\eps}$, $\eps>0$, defined as
  \begin{equation}
    [M_\eps]_{ij}=\left\{\begin{array}{lcl}
      -1 && \textnormal{if }[A]_{ij}=\ominus,\\
      \eps && \textnormal{if }[A]_{ij}=\oplus,\\
      0 && \textnormal{if }[A]_{ij}=0
    \end{array}\right.
  \end{equation}
  and note that $M_\eps\in\mathcal{Q}(A)$. Clearly, when $\eps=0$, the matrix $M_0$ is Hurwitz stable and all the eigenvalues are equal to -1. Using the fact that the eigenvalues of $M_\eps$ are continuous with respect to $\eps$, then we can conclude that there exists an $\bar{\eps}>0$ such that for all $0\le\eps<\bar{\eps}$, the matrix $M_\eps$ is Hurwitz-stable, showing then that $A$ is potentially sign-stable. The proof is complete.
\end{proof}

It is finally interesting to note that Theorem \ref{th:struct} can also be used to obtain analogous sign-stability conditions for nonnegative sign-matrices. Note, however, that sign-stability needs to be defined here as the Schur stability of all the matrices in the qualitative class; i.e. all the matrices in the qualitative class have spectral radius less than unity. This result is stated below:
\begin{theorem}\label{th:struct2}
  Let $A\in\mathbb{S}_{\ge0}^{n\times n}$. Then, the following statements are equivalent:
  \begin{enumerate}[(a)]
    \item\label{stat:struct2:1} All the matrices in $\mathcal{Q}(A)$ have spectral radius less than unity.
    \bluee{\item\label{stat:struct2:1b} All the matrices in $\mathcal{Q}(A)$ have zero spectral radius.}
    \item\label{stat:struct2:3} The matrix $\sgn(A)-I_n$ is Hurwitz stable.
    \item\label{stat:struct2:4} There exists $v\in\mathbb{R}_{>0}^n$ such that $v^T(\sgn(A)-I_n)<0$.
    \item\label{stat:struct2:5} The directed graph $D_A$ is acyclic and the diagonal elements of $A$ are all equal to zero.
    \item\label{stat:struct2:2} There exists a permutation matrix $P$ such that $P^T AP$ is upper-triangular and has zero elements on the diagonal.
    \item\label{stat:struct2:6} There is no back edge in $D_A$ and the diagonal elements of $A$ are all equal to zero.
  \end{enumerate}
\end{theorem}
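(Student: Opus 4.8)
The plan is to split the seven equivalences into two groups. The conditions \eqref{stat:struct2:3}--\eqref{stat:struct2:6}, which are purely algebraic or graph-theoretical, can be reduced almost verbatim to the machinery already developed for Theorem \ref{th:struct} by working with the single real Metzler matrix $B:=\sgn(A)-I_n$, while the two spectral-radius statements \eqref{stat:struct2:1}--\eqref{stat:struct2:1b} require a separate Perron--Frobenius argument exploiting the scaling freedom inside $\mathcal{Q}(A)$. I would treat this second group as the genuinely new ingredient, since the collapse of ``spectral radius $<1$'' onto ``spectral radius $=0$'' has no counterpart in the continuous-time statement of Theorem \ref{th:struct}.

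For the first group, observe that $B=\sgn(A)-I_n$ lies in $\mathbb{MR}^n\cap\{-1,0,1\}^{n\times n}$: its off-diagonal part is the $0/1$ off-diagonal part of $\sgn(A)$ and its diagonal entries equal $-1$ or $0$ according to whether $[A]_{ii}$ is $0$ or $\oplus$. Moreover $D_B=D_A$ because Definition \ref{def:graphD} ignores diagonal entries. Applying Lemma \ref{lem:nocycle} to $B$ then yields that \eqref{stat:struct2:3} (the Hurwitz stability of $B$) is equivalent to $B$ having negative diagonal and $D_B$ acyclic, i.e. to the diagonal of $A$ being identically zero and $D_A$ acyclic, which is exactly \eqref{stat:struct2:5}. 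The equivalence of \eqref{stat:struct2:3} and \eqref{stat:struct2:4} is immediate from Lemma \ref{lem:Metzler_stab} applied to the Metzler real matrix $B$. Finally, \eqref{stat:struct2:5} is equivalent to \eqref{stat:struct2:2} by the topological-ordering characterization of acyclic digraphs (the zero diagonal guaranteeing that the reordered matrix is strictly upper-triangular), and equivalent to \eqref{stat:struct2:6} by the same back-edge/DFS argument invoked in Theorem \ref{th:struct}, following \cite{Knuth:97}.

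It remains to splice the spectral-radius statements into this chain, and here I would prove the cycle $\eqref{stat:struct2:2}\Rightarrow\eqref{stat:struct2:1b}\Rightarrow\eqref{stat:struct2:1}\Rightarrow\eqref{stat:struct2:5}$. The first implication is easy: if $P^TAP$ is strictly upper-triangular, then every $M\in\mathcal{Q}(A)$ shares this sparsity pattern, so $P^TMP$ is strictly upper-triangular, hence nilpotent, and $M$ has zero spectral radius; this gives \eqref{stat:struct2:1b}, and $\eqref{stat:struct2:1b}\Rightarrow\eqref{stat:struct2:1}$ is trivial. The decisive step is $\eqref{stat:struct2:1}\Rightarrow\eqref{stat:struct2:5}$, which I would establish by contraposition. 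If some $[A]_{ii}=\oplus$, one can pick $M\in\mathcal{Q}(A)$ with $[M]_{ii}$ arbitrarily large, and since $\lambda_{PF}(M)\ge\max_i[M]_{ii}$ for nonnegative $M$, such an $M$ violates \eqref{stat:struct2:1}. If instead $D_A$ contains a cycle on distinct nodes $S=\{i_1,\dots,i_k\}$, I would choose $M\in\mathcal{Q}(A)$ whose entries along the cycle equal a common value $t\ge1$; the principal submatrix $M_S$ then dominates entrywise a scaled cyclic permutation matrix, so by monotonicity of the Perron root and the bound $\lambda_{PF}(M)\ge\lambda_{PF}(M_S)\ge t\ge1$ for principal submatrices of nonnegative matrices, \eqref{stat:struct2:1} again fails. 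Both facts used here are standard consequences of the Perron--Frobenius theory of nonnegative matrices \cite{Berman:94}. Closing the loop $\eqref{stat:struct2:2}\Rightarrow\eqref{stat:struct2:1b}\Rightarrow\eqref{stat:struct2:1}\Rightarrow\eqref{stat:struct2:5}$ together with the first group then yields the equivalence of all seven statements. The main obstacle I anticipate is purely bookkeeping: ensuring the cycle-scaling construction stays inside $\mathcal{Q}(A)$ while forcing $\lambda_{PF}\ge1$, and correctly tracking the edge-orientation convention of Definition \ref{def:graphD} when identifying the cyclic permutation pattern.
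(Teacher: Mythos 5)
Your proposal is correct, but it reaches the two spectral-radius statements by a genuinely different route than the paper. The paper's proof has exactly one new ingredient: it proves \eqref{stat:struct2:1} $\Leftrightarrow$ \eqref{stat:struct2:1b} by exploiting that $\mathcal{Q}(A)$ is invariant under positive scaling --- if some $M\in\mathcal{Q}(A)$ has $\rho(M)=\epsilon\in(0,1)$, then $\alpha M\in\mathcal{Q}(A)$ with $\rho(\alpha M)>1$ for $\alpha$ large --- and then disposes of \eqref{stat:struct2:3}--\eqref{stat:struct2:6} wholesale by citing Theorem \ref{th:struct} together with the Schur--Hurwitz shift $M\mapsto M-I_n$. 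You never use the scaling trick: instead you prove the cycle $\eqref{stat:struct2:2}\Rightarrow\eqref{stat:struct2:1b}\Rightarrow\eqref{stat:struct2:1}\Rightarrow\eqref{stat:struct2:5}$ directly, via nilpotency of strictly triangular patterns and a Perron--Frobenius contraposition (a large diagonal entry when $[A]_{ii}=\oplus$, or a $t$-scaled cyclic permutation pattern dominated by a principal submatrix when $D_A$ has a cycle, with $\rho(M)\ge\rho(M_S)\ge t\ge1$ by monotonicity of the Perron root). Both arguments are sound, and each buys something: the paper's cone argument is a two-line collapse of ``$\rho<1$'' onto ``$\rho=0$'' requiring no graph theory, whereas your explicit witness constructions are more self-contained and, in fact, slightly tighter on one point --- the set $\{M-I_n:M\in\mathcal{Q}(A)\}$ is \emph{not} itself a qualitative class of a Metzler sign-matrix when $A$ has $\oplus$ diagonal entries (shifted diagonal entries equal exactly $[M]_{ii}-1$, and can be positive or negative), so the paper's appeal to Theorem \ref{th:struct} needs a small gloss, while your application of Lemma \ref{lem:nocycle} and Lemma \ref{lem:Metzler_stab} to the single real matrix $B=\sgn(A)-I_n\in\mathbb{MR}^n\cap\{-1,0,1\}^{n\times n}$ (noting $D_B=D_A$ and that $B$ has negative diagonal iff $A$ has zero diagonal) sidesteps this entirely.
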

\bluee{\begin{proof}
We need to prove here that \eqref{stat:struct2:1} is equivalent to \eqref{stat:struct2:1b}. The equivalence with the other statements follows from Theorem \ref{th:struct} and the fact that a nonnegative matrix $A\in\mathbb{R}^{n\times n}_{\ge0}$ is Schur stable if and only if the Metzler matrix $A-I_n$ is Hurwitz stable. Clearly, \eqref{stat:struct2:1b} implies \eqref{stat:struct2:1}, so let us focus on the reverse implication and assume that \eqref{stat:struct2:1b} does not hold. Hence, there exists a matrix $M\in\mathcal{Q}(A)$ such that $\rho(M)=\epsilon>0$. If $\epsilon\ge1$, then \eqref{stat:struct2:1} does not hold and, on the other hand, if $\epsilon\in(0,1)$, then using the fact that $\alpha M\in\mathcal{Q}(A)$ for any $\alpha>0$,  we can conclude that, by choosing a large enough $\alpha>0$, we will have $\rho(\alpha M)>1$ and the statement \eqref{stat:struct2:1} does not hold again. This proves that \eqref{stat:struct2:1} implies \eqref{stat:struct2:1b}.
\end{proof}}

\section{Sign-stability of block matrices}\label{sec:structinter}

The objective of this section is to extend the results of the previous one to the case of Metzler block sign-matrices. At first sight, it may seem irrelevant to consider Metzler block sign-matrices as they can be analyzed using the tools derived in the previous section. The main rationale, however, is that these matrices are commonplace in fields such as control theory where they can be used to represent, for instance, interconnections of linear (positive) dynamical systems. The objective is then to find conditions that characterize the sign-stability of a given Metzler block sign-matrix  based on the sign-stability of its elementary diagonal blocks and some additional conditions capturing the interactions between the different diagonal blocks (i.e. the off-diagonal blocks). This paradigm has led to many important results such as the small-gain theorem \cite{Desoer:75a,Zhou:96} and its linear positive systems variants \cite{Briat:11h,Colombino:15}. 

First of all, several existing results about the linear $S$-procedure \cite{Yakubovich:77,Jonsson:01} are recalled and some novel ones are also obtained. The $S$-procedure is an essential tool of control theory which allows for checking the negativity of some conditions under some constraints in a computationally tractable way. 
These results are then used to obtain exact linear programming conditions characterizing the sign-stability of Metzler block real matrices.  Finally, the obtained conditions are then adapted to the analysis of the sign-stability of Metzler block-matrices. It is emphasized that such an approach enables the use of distributed algorithms for solving this problem for very large systems in a very efficient way.

\subsection{Preliminary results}

\bluee{The main theoretical tool considered here is the so-called $S$-procedure \cite{Yakubovich:77} and, more specifically, its linear version which is known to provide an exact characterization of  the nonnegativity of a linear form on a set defined by linear inequalities (e.g. a compact convex polytope).} The linear version of the $S$-procedure is recalled below for completeness.
\begin{lemma}[Linear $S$-procedure, \cite{Yakubovich:77,Jonsson:01}]\label{lem:Sproc}
  Let $\sigma_i\in\mathbb{R}^n$, $i=0,\ldots,N$, and assume that the set
  \begin{equation}
    \mathcal{S}_{\ge0}:=\left\{y\in\mathbb{R}^n:\sigma_i^Ty\ge0,\ i=1,\ldots,N\right\}
  \end{equation}
  is nonempty. Then, the following statements are equivalent:
  \begin{enumerate}[(a)]
    \item We have that $\sigma_0^Ty\ge0$ for all $y\in\mathcal{S}_{\ge0}$.
    \item There exist some multipliers $\tau_i\ge0$, $i=1,\ldots,N$, such that the inequality
\begin{equation}
        \sigma_0^Ty-\sum_{i=1}^N\tau_i\sigma_i^Ty\ge0
\end{equation}
holds for all $y\in\mathbb{R}^n$.
    \item There exists a multiplier vector $\tau\in\mathbb{R}_{\ge0}^N$ such that the inequality
\begin{equation}
        \sigma_0^Ty-\tau^T\bar{\sigma}y\ge0
\end{equation}
holds for all $y\in\mathbb{R}^n$ where $\textstyle\bar \sigma:=\col_{i=1}^N(\sigma_i^T)$.
  \end{enumerate}
\end{lemma}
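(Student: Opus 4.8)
The plan is to treat statements (b) and (c) as two notations for the same thing and to reduce the genuine content of the lemma, namely the equivalence of (a) and (b), to Farkas' lemma applied to the membership of $\sigma_0$ in the convex cone generated by $\sigma_1,\ldots,\sigma_N$.

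First I would dispatch the equivalence of (b) and (c), which is purely a matter of bookkeeping. Stacking the row vectors $\sigma_i^T$, $i=1,\ldots,N$, into the matrix $\bar\sigma=\col_{i=1}^N(\sigma_i^T)$ and collecting the multipliers into $\tau=(\tau_1,\ldots,\tau_N)^T\in\mathbb{R}_{\ge0}^N$, one has $\sum_{i=1}^N\tau_i\sigma_i^Ty=\tau^T\bar\sigma y$ for every $y\in\mathbb{R}^n$, so the two displayed inequalities coincide verbatim.

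Next I would handle the easy direction, (b) $\Rightarrow$ (a). Fixing multipliers $\tau_i\ge0$ as in (b) and taking any $y\in\mathcal{S}_{\ge0}$, each term $\sigma_i^Ty$ is nonnegative by definition of $\mathcal{S}_{\ge0}$, and since $\tau_i\ge0$ the sum $\sum_{i=1}^N\tau_i\sigma_i^Ty$ is nonnegative as well; combining this with the inequality in (b) evaluated at $y$ gives $\sigma_0^Ty\ge\sum_{i=1}^N\tau_i\sigma_i^Ty\ge0$, which is exactly (a).

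The substance lies in the converse (a) $\Rightarrow$ (b), and this is where I expect the only real work. The first reduction is to observe that the linear functional $y\mapsto\sigma_0^Ty-\sum_{i=1}^N\tau_i\sigma_i^Ty$ is nonnegative on all of $\mathbb{R}^n$ if and only if it vanishes identically (apply it at $y$ and at $-y$), so (b) is equivalent to the existence of $\tau_i\ge0$ with $\sigma_0=\sum_{i=1}^N\tau_i\sigma_i$; that is, (b) asserts precisely that $\sigma_0$ belongs to the cone $C:=\{\sum_{i=1}^N\tau_i\sigma_i:\tau_i\ge0\}$. Statement (a), on the other hand, asserts that every $y$ with $\sigma_i^Ty\ge0$ for all $i$ also satisfies $\sigma_0^Ty\ge0$. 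The equivalence of these two assertions is exactly Farkas' lemma, which I would invoke directly. For a self-contained argument one uses that the finitely generated cone $C$ is closed and convex, so if $\sigma_0\notin C$ the separating hyperplane theorem furnishes a vector $y$ with $\sigma_0^Ty<0\le\sigma_i^Ty$ for all $i$; such a $y$ lies in $\mathcal{S}_{\ge0}$ yet violates $\sigma_0^Ty\ge0$, contradicting (a). Hence (a) forces $\sigma_0\in C$, i.e. (b). The main obstacle is therefore the standard ingredient behind Farkas' lemma, the closedness of a finitely generated cone together with the separation step; once this is granted the argument is immediate. I would finally remark that the stated nonemptiness hypothesis on $\mathcal{S}_{\ge0}$ is automatically met in this homogeneous setting, since $y=0$ always belongs to $\mathcal{S}_{\ge0}$, so it plays no essential role here.
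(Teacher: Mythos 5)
Your proof is correct, but it takes a different route from the paper in the sense that the paper does not actually prove the core equivalence at all: its proof of this lemma consists of deferring the equivalence of (a) and (b) to the cited references \cite{Yakubovich:77,Jonsson:01} and observing, as you do, that (c) is merely a vectorized restatement of (b). You instead supply a self-contained argument, and the key move is a good one: since a linear functional that is nonnegative on all of $\mathbb{R}^n$ must vanish identically (evaluate at $y$ and $-y$), statement (b) collapses to the cone-membership assertion $\sigma_0\in\mathrm{cone}(\sigma_1,\ldots,\sigma_N)$, at which point (a) $\Leftrightarrow$ (b) is exactly Farkas' lemma, provable via closedness of finitely generated cones plus hyperplane separation. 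This buys two things the paper's citation does not make visible. First, it exposes that the homogeneous linear $S$-procedure is lossless with no constraint qualification whatsoever, in contrast to the quadratic or affine versions treated in the references, where a Slater-type condition is genuinely needed. Second, and relatedly, your closing remark that the nonemptiness hypothesis on $\mathcal{S}_{\ge0}$ is vacuous (since $0\in\mathcal{S}_{\ge0}$ always) is accurate and worth noting: that hypothesis in the lemma's statement is an artifact carried over from the general $S$-procedure and plays no role in the homogeneous setting, something one could not detect from the paper's proof-by-reference. All three steps of your argument --- the bookkeeping for (b) $\Leftrightarrow$ (c), the easy direction (b) $\Rightarrow$ (a), and the separation argument for (a) $\Rightarrow$ (b) --- are sound as written.
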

\begin{proof}
  The equivalence between the first statements follows from the results in \cite{Yakubovich:77,Jonsson:01}. The last one is simply is an equivalent vector reformulation of the statement (b).
\end{proof}

\blue{With this result in mind, we can now derive an analogous result on the nonnegative orthant that involves equality instead of inequality constraints.}
\blue{Based on the linear $S$-procedure, the following result can be proved:
\begin{proposition}\label{cor:Sproc}\label{prop:Sproc}
  Let $\sigma_i\in\mathbb{R}^n$, $i=0,\ldots,N$, and assume that the set
  \begin{equation}
    \mathcal{S}_{0}^+:=\left\{y\in\mathbb{R}^n_{\ge0}:\sigma_i^Ty=0,\ i=1,\ldots,N\right\}
  \end{equation}
  is nonempty. Then, the following statements are equivalent:
  \begin{enumerate}[(a)]
    \item We have that $\sigma_0^Ty\ge0$ for all $y\in\mathcal{S}_{0}^+$.
    \item There exist some multipliers $\eta_i\in\mathbb{R}$, $i=1,\ldots,N$, such that the inequality
    \begin{equation}
      \sigma_0^Ty+\sum_{i=1}^N\eta_i\sigma_i^Ty\ge0
    \end{equation}
    holds for all $y\in\mathbb{R}^n_{\ge0}$.
    \item There exists a multiplier vector $\eta\in\mathbb{R}^N$ such that $\left(\sigma_0^T-\eta^T\bar \sigma\right)y\ge0$ for all $y\in\mathbb{R}^n_{\ge0}$ where $\textstyle\bar \sigma:=\col_{i=1}^N(\sigma_i^T)$.
    \item There exists a multiplier vector $\eta\in\mathbb{R}^N$ such that $\sigma_0^T-\eta^T\bar \sigma\ge0$ where $\textstyle\bar \sigma:=\col_{i=1}^N(\sigma_i^T)$.
  \end{enumerate}
\end{proposition}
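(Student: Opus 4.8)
The plan is to prove the four equivalences by isolating the single substantive implication and disposing of the rest by elementary manipulations. First I would record that statements (b), (c) and (d) are equivalent among themselves almost by inspection. The form appearing in (c), namely $(\sigma_0^T-\eta^T\bar\sigma)y = \sigma_0^T y-\sum_i\eta_i\sigma_i^T y$, coincides with the form in (b) after the substitution $\eta\mapsto-\eta$, so (b) and (c) are literally the same statement up to the sign of the multipliers. Moreover, a linear functional $c^Ty$ is nonnegative for every $y\in\mathbb{R}^n_{\ge0}$ if and only if $c\ge0$ entrywise: necessity follows by testing $y=e_j$, the $j$-th canonical basis vector, and sufficiency by expanding $c^Ty=\sum_j c_j y_j$. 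This is exactly the equivalence of (c) and (d). Finally, (b) $\Rightarrow$ (a) is immediate, since on $\mathcal{S}_0^+$ every term $\sigma_i^Ty$ vanishes, so the inequality in (b) collapses to $\sigma_0^Ty\ge0$.

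The one nontrivial implication is therefore (a) $\Rightarrow$ (b), and the main idea is to realize $\mathcal{S}_0^+$ as a polyhedral cone of the form $\mathcal{S}_{\ge0}$ appearing in Lemma \ref{lem:Sproc}, so that the equality-constrained problem on the orthant can be fed directly into the inequality version of the linear $S$-procedure. To this end I would replace each equality $\sigma_i^Ty=0$ by the two inequalities $\sigma_i^Ty\ge0$ and $(-\sigma_i)^Ty\ge0$, and encode the orthant constraint $y\in\mathbb{R}^n_{\ge0}$ by the $n$ inequalities $e_j^Ty\ge0$. The set $\mathcal{S}_0^+$ is then exactly the set $\mathcal{S}_{\ge0}$ generated by the $2N+n$ vectors $\{\sigma_i,\,-\sigma_i,\,e_j\}$, and it is nonempty by hypothesis, so Lemma \ref{lem:Sproc} applies verbatim.

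Applying the linear $S$-procedure, statement (a) becomes equivalent to the existence of nonnegative multipliers $\alpha_i,\beta_i,\gamma_j\ge0$ such that
\[
\sigma_0^Ty-\sum_{i=1}^N\alpha_i\sigma_i^Ty-\sum_{i=1}^N\beta_i(-\sigma_i)^Ty-\sum_{j=1}^n\gamma_j e_j^Ty\ge0
\]
for all $y\in\mathbb{R}^n$. Setting $\eta_i:=\beta_i-\alpha_i$, which sweeps all of $\mathbb{R}$ as $\alpha_i,\beta_i$ range over the nonnegative reals, and $\gamma:=\col_{j=1}^n(\gamma_j)\ge0$, the inequality reads $\left(\sigma_0^T+\sum_i\eta_i\sigma_i^T-\gamma^T\right)y\ge0$ for all $y\in\mathbb{R}^n$. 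The key point I would stress is that this now holds on the whole space $\mathbb{R}^n$, not merely on the orthant, and a linear functional nonnegative everywhere must vanish identically; this forces $\sigma_0^T+\sum_i\eta_i\sigma_i^T=\gamma^T\ge0$. This entrywise nonnegativity is statement (d) after the harmless renaming $\eta\mapsto-\eta$, and it trivially implies (b); the cycle then closes via the already-established (b) $\Rightarrow$ (a).

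I expect the only delicate point to be the bookkeeping of signs when splitting each equality into an inequality pair, in particular verifying that $\eta_i=\beta_i-\alpha_i$ indeed ranges over all of $\mathbb{R}$, together with the observation that nonnegativity of the combined form on all of $\mathbb{R}^n$ (rather than only on the orthant) is precisely what converts the orthant multipliers $\gamma$ into the entrywise nonnegativity conclusion. Everything else is routine.
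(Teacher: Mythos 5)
Your proof is correct and follows essentially the same route as the paper: both replace each equality $\sigma_i^Ty=0$ by the inequality pair $\sigma_i^Ty\ge0$, $-\sigma_i^Ty\ge0$, invoke Lemma \ref{lem:Sproc}, and recombine the two nonnegative multipliers into a free real multiplier $\eta_i=\tau_i-\tau_{i+N}$, with (c)$\Leftrightarrow$(d) handled by the standard fact that $c^Ty\ge0$ on $\mathbb{R}^n_{\ge0}$ iff $c\ge0$. Your one departure — encoding the orthant explicitly via the constraints $e_j^Ty\ge0$ with multipliers $\gamma_j$ and then using the fact that a linear form nonnegative on all of $\mathbb{R}^n$ must vanish — is in fact slightly more careful than the paper's version, which applies Lemma \ref{lem:Sproc} while keeping the restriction $y\in\mathbb{R}^n_{\ge0}$ in the conclusion even though the lemma is stated for all $y\in\mathbb{R}^n$; your bookkeeping is the rigorous way to justify that step.
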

\begin{proof}
  Defining
  \begin{equation}
    \mathcal{S}_{\le0}:=\left\{y\in\mathbb{R}^n:-\sigma_i^Ty\ge0,\ i=1,\ldots,N\right\}
  \end{equation}
  then we can see that $\mathcal{S}_0^+=\mathcal{S}_{\ge0}\cap\mathcal{S}_{\le0}\cap\mathbb{R}^n_{\ge0}$. Applying then the linear $S$-procedure, i.e. Lemma \ref{lem:Sproc}, we get that the statement (a) is equivalent to the existence of some scalars $\tau_i\ge0$, $i=1,\ldots,2N$, such that
  \begin{equation*}
    \sigma_0^Ty+\sum_{i=1}^N\left[\tau_i-\tau_{i+N}\right]\sigma_i^Ty\ge0
  \end{equation*}
  for all $y\in\mathbb{R}_{\ge0}^n$. Letting then $\eta_i:=\tau_i-\tau_{i+N}\in\mathbb{R}$ yields the equivalence between the two first statements. The equivalence between the three last statements follows from the facts that
  \begin{equation*}
    \sigma_0^Ty-\sum_{i=1}^N\eta_i\sigma_i^Ty=\left(\sigma_0^T-\eta^T\bar \sigma\right)y
  \end{equation*}
  with $\textstyle\eta=\col_{i=1}^N(\eta_i)$ and that for a given vector $c\in\mathbb{R}^n$, we have that $c^Ty\ge0$ for all $y\in\mathbb{R}^n_{\ge0}$ if and only if $c^T\ge0$.
\end{proof}}
We can now state the following important result:
\begin{theorem}\label{th:inter}
Let $A_i\in\mathbb{MR}^{n_i}$ and $B_{ij}\in\mathbb{R}_{\ge0}^{n_i\times n_{ij}}, C_{ij}\in\mathbb{R}^{n_{ij}\times n_j}_{\ge0}$, $i,j=1,\ldots,N$, $i\ne j$. Then, the following statements are equivalent:
  \begin{enumerate}[(a)]
    \item The Metzler matrix
    \begin{equation}\label{eq:jdqsjdlskdsjklsfldkj}
    \begin{bmatrix}
      A_1  & B_{12}C_{12} & \ldots & B_{1N}C_{1N}\\
      \vdots & & & \vdots\\
      B_{N1}C_{N1} & B_{N2}C_{N2} & \ldots & A_N
    \end{bmatrix}
    \end{equation}
    is Hurwitz stable.
    \item There exist vectors $v_i\in\mathbb{R}_{>0}^{n_i}$, $\ell_{ij}\in\mathbb{R}^{n_{ij}}$, $i,j=1,\ldots,N$, $i\ne j$, such that the inequalities
        \begin{equation}\label{eq:kdslkdlsqkdlsml}
\begin{array}{rcl}
      v_i^T A_i+\sum_{j\ne i}^N\ell_{ji}^T C_{ji}&<&0\\
      v_i^TB_{ij}-\ell_{ij}^T&\le&0
\end{array}
\end{equation}
hold for all $i,j=1,\ldots,N$, $j\ne i$.
\end{enumerate}
    \end{theorem}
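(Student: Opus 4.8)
The plan is to reduce the Hurwitz stability of the block matrix in \eqref{eq:jdqsjdlskdsjklsfldkj} to the scalar Lyapunov inequality of Lemma~\ref{lem:Metzler_stab}(c) and then to exploit the nonnegativity of the blocks $C_{ji}$ in order to decouple the resulting conditions across the diagonal blocks. Writing $n:=\sum_{i=1}^N n_i$, letting $M$ denote the matrix in \eqref{eq:jdqsjdlskdsjklsfldkj}, and partitioning a candidate positive vector as $v:=\col_{i=1}^N(v_i)$ with $v_i\in\mathbb{R}_{>0}^{n_i}$, Lemma~\ref{lem:Metzler_stab} states that $M$ is Hurwitz stable if and only if there is such a $v$ with $v^T M<0$. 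Reading this inequality block-column by block-column, it is equivalent to the family of \emph{coupled} inequalities
\[
v_i^T A_i+\sum_{j\ne i}^N v_j^T B_{ji}C_{ji}<0,\qquad i=1,\ldots,N,
\]
and the role of statement (b) is to break the coupling between $v_j$ and block $i$ by inserting the slack row-vectors $\ell_{ji}^T$ in place of the products $v_j^T B_{ji}$.

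To prove that (b) implies (a), I would start from the second inequality in \eqref{eq:kdslkdlsqkdlsml}, namely $v_i^T B_{ij}\le\ell_{ij}^T$ read componentwise, together with the hypothesis $C_{ji}\ge0$: right-multiplying a componentwise vector inequality by a nonnegative matrix preserves it, so $v_j^T B_{ji}C_{ji}\le\ell_{ji}^T C_{ji}$ for every $j\ne i$. Summing over $j$ and invoking the first inequality of \eqref{eq:kdslkdlsqkdlsml} then yields
\[
v_i^T A_i+\sum_{j\ne i}^N v_j^T B_{ji}C_{ji}\le v_i^T A_i+\sum_{j\ne i}^N\ell_{ji}^T C_{ji}<0
\]
for each $i$, i.e. $v^T M<0$ with $v>0$, whence Hurwitz stability of $M$ follows from Lemma~\ref{lem:Metzler_stab}.

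For the converse, Lemma~\ref{lem:Metzler_stab} again supplies vectors $v_i\in\mathbb{R}_{>0}^{n_i}$ with $v^T M<0$, that is, the coupled inequalities displayed above. I would then make the tightest admissible choice of multipliers, $\ell_{ij}^T:=v_i^T B_{ij}$, for which the second inequality of \eqref{eq:kdslkdlsqkdlsml} holds with equality while the first inequality reduces \emph{exactly} to the coupled condition already in hand; this closes the equivalence. A more systematic (though here unnecessary) justification of the decoupling step is available through the nonnegative-orthant $S$-procedure of Proposition~\ref{prop:Sproc}, the relevant elementary ingredient being its last one: $c^T y\ge0$ for all $y\in\mathbb{R}^n_{\ge0}$ if and only if $c^T\ge0$, applied to $c^T=\ell_{ji}^T-v_j^T B_{ji}$.

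The only genuinely substantive point, and the step I would be most careful about, is the direction and componentwise reading of the monotonicity $v_j^T B_{ji}C_{ji}\le\ell_{ji}^T C_{ji}$, which is precisely where the standing hypotheses $B_{ij},C_{ij}\ge0$ and $v_i>0$ are consumed; everything else is bookkeeping of the block indices. In particular one must verify that no sign constraint on $\ell_{ij}$ is required (indeed $\ell_{ij}$ ranges over all of $\mathbb{R}^{n_{ij}}$ in statement (b)), which is confirmed a posteriori by the fact that the tight choice $\ell_{ij}^T=v_i^T B_{ij}$ is automatically nonnegative.
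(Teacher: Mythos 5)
Your proof is correct, and it takes a genuinely different and more elementary route than the paper. The paper derives the equivalence by rewriting $v^TM\mathbf{x}\le-\eps v^T\mathbf{x}$ over the nonnegative orthant, introducing slack variables $\mathbf{w}_{ij}=C_{ij}\mathbf{x}_j$ as equality constraints, and invoking the lossless linear $S$-procedure of Proposition~\ref{prop:Sproc} to extract the multipliers $\ell_{ij}$; the exactness of that relaxation is what delivers necessity. You bypass this machinery entirely: for (b)$\Rightarrow$(a) the componentwise monotonicity $v_j^TB_{ji}C_{ji}\le\ell_{ji}^TC_{ji}$ (valid precisely because $C_{ji}\ge0$, as you flag) collapses \eqref{eq:kdslkdlsqkdlsml} into $v^TM<0$, and for (a)$\Rightarrow$(b) the explicit tight choice $\ell_{ij}^T:=v_i^TB_{ij}$ turns the coupled block-column inequalities into \eqref{eq:kdslkdlsqkdlsml} verbatim, so losslessness of the multiplier relaxation is exhibited by inspection rather than cited from $S$-procedure theory. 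Your argument also quietly avoids the $\eps$-bookkeeping the paper needs to run the $S$-procedure on an unbounded cone, since you work with the strict inequality $v^TM<0$ directly. What the paper's heavier route buys is mainly methodological: it frames the decoupling as an instance of the exact equality-constrained $S$-procedure on $\mathbb{R}^n_{\ge0}$, a template that is then reused for Theorems~\ref{th:inter2} and~\ref{th:inter3} and that would still apply in settings where no closed-form tight multiplier is available; your proof, by contrast, is shorter, self-contained modulo Lemma~\ref{lem:Metzler_stab}, and makes transparent that statement (b) is simply an equivalent reparametrization of the Lyapunov certificate for the full block matrix. Both proofs are complete; yours is arguably the cleaner one for this specific statement.
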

\begin{proof}
  \blue{The equivalence between the two statements can be shown using Proposition \ref{cor:Sproc}. From Lemma \ref{lem:Metzler_stab}, a Metzler matrix $A\in\mathbb{MR}^n$ is Hurwitz stable if and only if there exists a vector $v\in\mathbb{R}_{>0}^n$ such that $v^TA<0$, which is equivalent to the existence of a vector $v\in\mathbb{R}_{>0}^n$ and a small enough scalar $\eps>0$ such that $v^TA\mathbf{x}\le-\eps v^T\mathbf{x}$ for all $\mathbf{x}\in\mathbb{R}_{\ge0}^n$. By applying this result to the matrix \eqref{eq:jdqsjdlskdsjklsfldkj}  and decomposing $\textstyle v=:\col_{i=1}^N(v_i)$, $v_i\in\mathbb{R}^{n_i}_{>0}$, and $\textstyle\mathbf{x}=:\col_{i=1}^N(\mathbf{x}_i)$, $\mathbf{x}_i\in\mathbb{R}^{n_i}_{>0}$, following the same partitioning, we get that the matrix \eqref{eq:jdqsjdlskdsjklsfldkj}  is Hurwitz stable if and only if the following feasibility problem has a solution:}
  \begin{equation*}
    \textnormal{(P)}\quad \left|\begin{array}{rcl}
      \textnormal{Find}&&v_i\in\mathbb{R}_{>0}^{n_i},\ \eps>0\\
      \textnormal{s.t.}&& \sum_{i=1}^Nv_i^T\left(A_i{\bf x}_i+\sum_{\substack{j=1\\j\ne i}}^NB_{ij}C_{ij}{\bf x}_{j}\right)\le-\eps \sum_{i=1}^Nv_i^T{\bf x}_i,\\
      \textnormal{for all }&& {\bf x}_i\in\mathbb{R}_{\ge0}^{n_i},\ i=1,\ldots,N
      \end{array}\right.
  \end{equation*}
 \blue{Letting now ${\bf w}_{ij}:=C_{ij}{\bf x}_j$, $i,j=1,\ldots,N$, $i\ne j$, we get that the inequality constraint in Problem (P) is equivalent to the fact that the inequality
  \begin{equation}\label{eq:xw}
  \begin{array}{rcl}
        \sum_{i=1}^Nv_i^T\left((A_i+\eps I){\bf x}_i+\sum_{\substack{j=1\\j\ne i}}^NB_{ij}{\bf w}_{ij}\right)&\le&0.
  \end{array}
  \end{equation}
  %
Hence, we get a problem analogous to that of Proposition \ref{cor:Sproc} with the difference that we have vector equality constraints instead of scalar ones. However, by virtue of the statement (c) or (d) of Proposition \ref{cor:Sproc}, vector of constraints can be dealt using vector multipliers (this can also be retrieved by scalarizing the constraints, considering scalar multipliers and vectorizing the result).
  %
%
  Invoking then Proposition \ref{cor:Sproc}, (b), adapted to vector equality constraints, we get that the above inequality condition holds under the vector equality constraints if and only if there exist some vector multipliers $\ell_{ij}\in\mathbb{R}^{n_{ij}}$, $i,j=1,\ldots,N$, $i\ne j$, such that
\begin{equation}
             \sum_{i=1}^Nv_i^T\left((A_i+\eps I){\bf x}_i+\sum_{\substack{j=1\\j\ne i}}^NB_{ij}{\bf w}_{ij}\right)+\sum_{\substack{i,j=1\\j\ne i}}^N\ell_{ij}^T\left(C_{ij}{\bf x}_j-{\bf w}_{ij}\right)\le0
  \end{equation}
  holds for all ${\bf x}_i\in\mathbb{R}_{\ge0}^{n_i}$ and all ${\bf w}_{ij}\in\mathbb{R}_{\ge0}^{n_{ij}}$, $i,j=1,\ldots,N$, $i\ne j$. Reorganizing the terms in the above inequality yields
  \begin{equation}\label{eq:dksodksmd}
               \sum_{i=1}^N\left(v_i^T(A_i+\eps I)+\sum_{\substack{j=1\\j\ne i}}^N\ell_{ji}^T C_{ji}\right){\bf x}_i+\sum_{\substack{i,j=1\\j\ne i}}^N\left(v_i^TB_{ij}-\ell_{ij}^T\right){\bf w}_{ij}\le0
  \end{equation}
  where the swapping of the indices in the $\ell_{ij}$ stems from the fact $\ell_{ij}$ acts on terms depending on ${\bf x}_j$ while $\ell_{ji}$ acts on terms depending on ${\bf x}_i$. Using finally the equivalence between the statements (c) and (d) of Proposition \ref{cor:Sproc}, we get that the feasibility of \eqref{eq:dksodksmd} for all ${\bf x}_i\in\mathbb{R}_{\ge0}^{n_i}$ and all ${\bf w}_{ij}\in\mathbb{R}_{\ge0}^{n_{ij}}$ is equivalent to the feasibility of
  \begin{equation}
    v_i^T(A_i+\eps I)+\sum_{\substack{j=1\\j\ne i}}^N\ell_{ji}^T C_{ji}\le0\quad\textnormal{and}\quad   v_i^TB_{ij}-\ell_{ij}^T\le0
  \end{equation}
  for all  $i,j=1,\ldots,N$, $i\ne j$, which is equivalent to the conditions \eqref{eq:kdslkdlsqkdlsml}. The proof is complete.}
\end{proof}

We can see from this result that the stability of the overall block-matrix can be broken down to the stability analysis of $N$ subproblems (one for each of the $v_i$'s) involving coupled multipliers that explicitly capture the topology of the interconnection. An important advantage of this formulation lies in its convenient form allowing for the derivation of efficient parallel algorithms for establishing the Hurwitz stability of Metzler block-matrices.

\subsection{Main results}

\bluee{Before stating the main result of the section, we define the Minkowski product between two qualitative classes associated with nonnegative sign-matrices as the set
\begin{equation}
  \mathcal{Q}(A_1)\mathcal{Q}(A_2):=\left\{M_1M_2:\ M_1\in\mathcal{Q}(A_1),M_2\in\mathcal{Q}(A_2)\right\}
\end{equation}
where $A_1\in\mathbb{S}_{\ge0}^{n_1\times m}$ and $A_2\in\mathbb{S}_{\ge0}^{m\times n_2}$.} It is also important to introduce the three following product rules for dealing with products of nonnegative sign-matrices: 1) $\oplus+\oplus=\oplus$; 2) $\oplus\cdot\oplus=\oplus$; 3) $0\cdot\oplus=0$. It seems that product of sign-matrices has never been introduced before and is defined for the first time. In addition, the Minkowski products of qualitative classes also seems to be new. A possible reason comes from the fact that the set of sign-matrices is not closed under addition or multiplication. However, the set of nonnegative sign-matrices is closed under these operations.

\bluee{\begin{remark}\label{remark:comm}
Note that because of these product rules, we have $\mathcal{Q}(A_1)\mathcal{Q}(A_2)\subset\mathcal{Q}(A_1A_2)$ and, in general, $\mathcal{Q}(A_1)\mathcal{Q}(A_2)\ne\mathcal{Q}(A_1A_2)$. Similarly, $\sgn(A_1A_2)\ne\sgn(A_1)\sgn(A_2)$ in general.
\end{remark}}

\begin{remark}\label{remark:comm}
Note that because of these product rules, we have that $\mathcal{Q}(A_1)\mathcal{Q}(A_2)\subset\mathcal{Q}(A_1A_2)$ and that, in general, $\mathcal{Q}(A_1)\mathcal{Q}(A_2)\ne\mathcal{Q}(A_1A_2)$. For instance, if
  \begin{equation*}
    A_1=\begin{bmatrix}
  \oplus \\ \oplus
\end{bmatrix}\ \textnormal{and}\ A_2=\begin{bmatrix}
  \oplus & \oplus
\end{bmatrix},
  \end{equation*}
  then $\mathcal{Q}(A_1)\mathcal{Q}(A_2)$ is the set of all $2\times 2$ rank-1 positive matrices. However, since
  \begin{equation*}
  A_1A_2=\begin{bmatrix}
  \oplus & \oplus\\
  \oplus & \oplus
\end{bmatrix},
  \end{equation*}
  then $\mathcal{Q}(A_1A_2)$ is the set of all  $2\times 2$ positive matrices. On the other hand, if
    \begin{equation*}
    A_1=\begin{bmatrix}
  \oplus \\ \oplus
\end{bmatrix}\ \textnormal{and}\ A_2=\begin{bmatrix}
  \oplus & 0
\end{bmatrix},
  \end{equation*}
  then we have that $\mathcal{Q}(A_1)\mathcal{Q}(A_2)=\mathcal{Q}(A_1A_2)$. A similar statement can be made for unit sign-matrices. We indeed have, in general, that $\sgn(A_1A_2)\ne\sgn(A_1)\sgn(A_2)$. For instance, this is the case for the matrices
  \begin{equation*}
    A_1=\begin{bmatrix}
  \oplus & \oplus\\ 0 & \oplus
\end{bmatrix}\ \textnormal{and}\ A_2=\begin{bmatrix}
  \oplus \\ \oplus
\end{bmatrix},
  \end{equation*}
  for which we have that
  \begin{equation*}
  \sgn(A_1A_2)=\begin{bmatrix}
    1\\
    1
  \end{bmatrix}\ \textnormal{and}\    \sgn(A_1)\sgn(A_2)=\begin{bmatrix}
    2\\
    1
  \end{bmatrix}.
  \end{equation*}
\end{remark}

The above remark emphasizes the difficulty arising from the possible loss of independence between the entries of a matrix resulting from the multiplication of two sign-matrices. With this warning in mind, we can state the following result:
\begin{theorem}\label{th:inter2}
Let $\tilde A_i\in\mathbb{MS}^{n_i}$ and $\tilde B_{ij}\in\mathbb{S}_{\ge0}^{n_i\times n_{ij}}, \tilde C_{ij}\in\mathbb{S}^{n_{ij}\times n_j}_{\ge0}$. Assume, moreover, that $\mathcal{Q}(\tilde B_{ij}\tilde C_{ij})=\mathcal{Q}(\tilde B_{ij})\mathcal{Q}(\tilde C_{ij})$ for all $i,j=1,\ldots,N$, $i\ne j$. Then, the following statements are equivalent:
  \begin{enumerate}[(a)]
    \item The Metzler sign-matrix
    \begin{equation}\label{eq:jdqsjdlskdsjklsfldkj4546546546}
    \tilde{A}=\begin{bmatrix}
      \tilde A_1  & \tilde B_{12}\tilde C_{12} & \ldots & \tilde B_{1N}\tilde C_{1N}\\
      \vdots & & & \vdots\\
      \tilde B_{N1}\tilde C_{N1} & \tilde B_{N2}\tilde C_{N2} & \ldots & \tilde A_N
    \end{bmatrix}
    \end{equation}
    is sign-stable.
      \item The matrix $\sgn(\tilde{A})$ is Hurwitz stable.
    \item There exist vectors $v_i\in\mathbb{R}_{>0}^{n_i}$, $\ell_{ij}\in\mathbb{R}^{n_{j}}$, $i,j=1,\ldots,N$, $j\ne i$, such that the inequalities
        \begin{equation}\label{eq:kdkdQ}
    \begin{array}{rcl}
      v_i^T \sgn(\tilde A_i)+\sum_{j\ne i}^N\ell_{ji}^T&<&0\\
      v_i^T \sgn(\tilde B_{ij}\tilde C_{ij})-\ell_{ij}^T&<&0
\end{array}
\end{equation}
for all $i,j=1,\ldots,N$, $j\ne i$.
\item The diagonal elements of $\tilde{A}$ are negative and the directed graph $D_{\tilde{A}}$ is acyclic.
\end{enumerate}
\end{theorem}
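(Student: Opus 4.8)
The plan is to reduce everything to the two results already in hand: Theorem \ref{th:struct}, which characterises sign-stability of a single Metzler sign-matrix, applied to the assembled matrix $\tilde A$; and Theorem \ref{th:inter}, which gives the block linear-programming characterisation of Hurwitz stability, applied to the real Metzler matrix $\sgn(\tilde A)$.

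First I would treat $\tilde A$ as a single element of $\mathbb{MS}^n$ with $n=\sum_i n_i$. Its diagonal entries are exactly the diagonal entries of the blocks $\tilde A_i$, and its directed graph $D_{\tilde A}$ is fixed by the support of the off-diagonal blocks $\tilde B_{ij}\tilde C_{ij}$. Invoking Theorem \ref{th:struct}, the equivalences among statement (a) (sign-stability, i.e. \eqref{stat:struct:1}), statement (b) ($\sgn(\tilde A)$ Hurwitz, i.e. \eqref{stat:struct:3}) and statement (d) (negative diagonal and acyclic $D_{\tilde A}$, i.e. \eqref{stat:struct:5}) are then immediate. The role of the standing assumption $\mathcal{Q}(\tilde B_{ij}\tilde C_{ij})=\mathcal{Q}(\tilde B_{ij})\mathcal{Q}(\tilde C_{ij})$ is to guarantee that the qualitative class of the assembled matrix coincides with the set of matrices realisable from independent choices of the factors $\tilde B_{ij}$ and $\tilde C_{ij}$, so that sign-stability of $\tilde A$ is genuinely the sign-stability of the interconnection; see Remark \ref{remark:comm}. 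This is a modelling point rather than an ingredient of the equivalence itself.

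The substantive step is (b) $\Leftrightarrow$ (c), where I would apply Theorem \ref{th:inter} to the real Metzler matrix $\sgn(\tilde A)$, whose diagonal blocks $\sgn(\tilde A_i)\in\mathbb{MR}^{n_i}$ and whose off-diagonal blocks are the nonnegative matrices $\sgn(\tilde B_{ij}\tilde C_{ij})\in\mathbb{R}_{\ge0}^{n_i\times n_j}$. The key device is to use the \emph{trivial} factorisation of each off-diagonal block, namely to take $B_{ij}=\sgn(\tilde B_{ij}\tilde C_{ij})$ and $C_{ij}=I_{n_j}$ in Theorem \ref{th:inter}. With this choice the inner dimension becomes $n_{ij}=n_j$, so the multipliers are $\ell_{ij}\in\mathbb{R}^{n_j}$ exactly as in statement (c); the first family of inequalities of Theorem \ref{th:inter} collapses to $v_i^T\sgn(\tilde A_i)+\sum_{j\ne i}\ell_{ji}^T<0$ because $C_{ji}=I_{n_i}$, and the second family becomes $v_i^T\sgn(\tilde B_{ij}\tilde C_{ij})-\ell_{ij}^T\le0$, which is precisely \eqref{eq:kdkdQ} up to strictness. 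It is essential to factor through the identity rather than through $\sgn(\tilde B_{ij})$ and $\sgn(\tilde C_{ij})$: by Remark \ref{remark:comm} one has $\sgn(\tilde B_{ij})\sgn(\tilde C_{ij})\ne\sgn(\tilde B_{ij}\tilde C_{ij})$ in general, so the latter factorisation would produce a different matrix and break the identification with $\sgn(\tilde A)$.

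The remaining obstacle, and the one I expect to require the most care, is the discrepancy between the non-strict inequality $v_i^TB_{ij}-\ell_{ij}^T\le0$ delivered by Theorem \ref{th:inter} and the strict inequality demanded in \eqref{eq:kdkdQ}. One direction is free: any solution of the strict system (c) relaxes to a solution of the non-strict system, hence to (b). For the converse I would start from a feasible pair $(v_i,\ell_{ij})$ and perturb the multipliers by $\ell_{ij}\mapsto\ell_{ij}+\delta_{ij}\mathds{1}$ with $\delta_{ij}>0$. Each such shift makes the $(i,j)$ second inequality strict, while it worsens only the first inequality of index $j$ by the positive amount $\sum_{k\ne j}\delta_{kj}\mathds{1}^T$; since that first inequality already holds strictly, choosing every $\delta_{ij}$ small enough preserves it. This produces a fully strict solution and closes (b) $\Leftrightarrow$ (c), completing the chain (a) $\Leftrightarrow$ (b) $\Leftrightarrow$ (c) $\Leftrightarrow$ (d).
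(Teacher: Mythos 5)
Your proposal is correct and follows essentially the same route as the paper's proof: the equivalences (a) $\Leftrightarrow$ (b) $\Leftrightarrow$ (d) via Theorem \ref{th:struct} together with Remark \ref{remark:comm}, and (b) $\Leftrightarrow$ (c) by identifying $\sgn(\tilde{A})$ with the matrix \eqref{eq:jdqsjdlskdsjklsfldkj} of Theorem \ref{th:inter} under exactly the same trivial factorisation $B_{ij}=\sgn(\tilde B_{ij}\tilde C_{ij})$, $C_{ij}=I_{n_j}$. Your closing perturbation argument $\ell_{ij}\mapsto\ell_{ij}+\delta_{ij}\mathds{1}$, reconciling the non-strict inequality $v_i^TB_{ij}-\ell_{ij}^T\le0$ delivered by Theorem \ref{th:inter} with the strict one demanded in \eqref{eq:kdkdQ}, is a correct patch of a detail the paper's proof passes over silently.
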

\blue{\begin{proof}
The assumption that $\mathcal{Q}(\tilde B_{ij}\tilde C_{ij})=\mathcal{Q}(\tilde B_{ij})\mathcal{Q}(\tilde C_{ij})$ for all $i,j=1,\ldots,N$, $i\ne j$, indicates that multiplications between nonnegative sign-matrices do not create qualitative classes that strictly include the initial ones. This indicates, by virtue of Theorem \ref{th:struct} and Remark \ref{remark:comm}, that the sign-stability of the matrix $\tilde A$ is equivalent to the Hurwitz stability of $\sgn(\tilde A)$, which proves the equivalence between (a) and (b). The equivalence with (d) also follows from Theorem \ref{th:struct} and Remark \ref{remark:comm}.
\
We prove now the equivalence between (b) and (c). To this aim, let us consider the matrix
\begin{equation}
    \sgn(\tilde{A})=\begin{bmatrix}
      \sgn(\tilde A_1)  & \sgn(\tilde B_{12}\tilde C_{12}) & \ldots & \sgn(\tilde B_{1N}\tilde C_{1N})\\
      \vdots & & & \vdots\\
     \sgn( \tilde B_{N1}\tilde C_{N1}) & \sgn(\tilde B_{N2}\tilde C_{N2}) & \ldots & \sgn(\tilde A_N)
    \end{bmatrix}.
\end{equation}
Note that, by virtue of Remark \ref{remark:comm}, we have that $\sgn(\tilde B_{ij}\tilde C_{ij})\ne\sgn(\tilde B_{ij})\sgn(\tilde C_{ij})$ in general. We now observe that the above matrix is of the form of the matrix \eqref{eq:jdqsjdlskdsjklsfldkj} where $A_i=\tilde A_i$, $B_{ij}=\sgn(\tilde B_{ij}\tilde C_{ij})$ and $C_{ij}=I_{n_j}$. Applying then Theorem \ref{th:inter} yields the equivalence between the statements (b) and (c).
\end{proof}}
%

\bluee{The above result admits a potentially simpler formulation from a computational viewpoint (in the case where $n_{ij}< n_j$) whenever the assumption $\sgn(\tilde B_{ij}\tilde C_{ij})=\sgn(\tilde B_{ij})\sgn(\tilde C_{ij})$ holds for all $i,j=1,\ldots,N$, $i\ne j$. In this case, we have the following result:}
\begin{theorem}\label{th:inter3}
  Let $\tilde A_i\in\mathbb{MS}^{n_i}$ and $\tilde B_{ij}\in\mathbb{S}_{\ge0}^{n_i\times n_{ij}}, \tilde C_{ij}\in\mathbb{S}^{n_{ij}\times n_j}_{\ge0}$. Assume, moreover, that $\mathcal{Q}(\tilde B_{ij}\tilde C_{ij})=\mathcal{Q}(\tilde B_{ij})\mathcal{Q}(\tilde C_{ij})$ and $\sgn(\tilde B_{ij}\tilde C_{ij})=\sgn(\tilde B_{ij})\sgn(\tilde C_{ij})$ for all $i,j=1,\ldots,N$, $i\ne j$. Then, the following statements are equivalent:
\begin{enumerate}[(a)]
  \item The matrix $\tilde{A}$ in \eqref{eq:jdqsjdlskdsjklsfldkj4546546546} is sign-stable.
  \item  There exist vectors $v_i\in\mathbb{R}_{>0}^{n_i}$, $\ell_{ij}\in\mathbb{R}^{n_{ij}}$, $i,j=1,\ldots,N$, $j\ne i$, such that the inequalities
          \begin{equation}\label{eq:kdkdQ2}
\begin{array}{rcl}
      v_i^T \sgn(\tilde A_i)+\sum_{j\ne i}^N\ell_{ji}^T\sgn(\tilde C_{ji})&<&0\\
      v_i^T \sgn(\tilde B_{ij})-\ell_{ij}^T&<&0
\end{array}
\end{equation}
for all $i,j=1,\ldots,N$, $j\ne i$.
\end{enumerate}
\end{theorem}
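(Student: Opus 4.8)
The plan is to obtain this result as a direct corollary of Theorems~\ref{th:inter} and~\ref{th:inter2}, the new ingredient being the finer factorization of the off-diagonal blocks permitted by the extra hypothesis $\sgn(\tilde B_{ij}\tilde C_{ij})=\sgn(\tilde B_{ij})\sgn(\tilde C_{ij})$. The whole argument is structural and requires essentially no new computation, since the analytic work has already been carried out in the proofs of Theorems~\ref{th:inter} and~\ref{th:inter2}.

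First I would invoke Theorem~\ref{th:inter2}. Because the assumption $\mathcal{Q}(\tilde B_{ij}\tilde C_{ij})=\mathcal{Q}(\tilde B_{ij})\mathcal{Q}(\tilde C_{ij})$ is in force, statement (a), the sign-stability of $\tilde A$, is equivalent to the Hurwitz stability of the real Metzler matrix $\sgn(\tilde A)$, whose diagonal blocks are $\sgn(\tilde A_i)$ and whose off-diagonal blocks are $\sgn(\tilde B_{ij}\tilde C_{ij})$. This reduces the problem to characterizing the Hurwitz stability of $\sgn(\tilde A)$ by a linear program with multipliers.

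The key step is then to exploit the second hypothesis. Under $\sgn(\tilde B_{ij}\tilde C_{ij})=\sgn(\tilde B_{ij})\sgn(\tilde C_{ij})$, each off-diagonal block of $\sgn(\tilde A)$ factors as a product of two nonnegative real matrices of compatible dimensions, namely $\sgn(\tilde B_{ij})\in\mathbb{R}_{\ge0}^{n_i\times n_{ij}}$ and $\sgn(\tilde C_{ij})\in\mathbb{R}_{\ge0}^{n_{ij}\times n_j}$. Consequently $\sgn(\tilde A)$ is precisely of the form \eqref{eq:jdqsjdlskdsjklsfldkj} under the identifications $A_i=\sgn(\tilde A_i)$, $B_{ij}=\sgn(\tilde B_{ij})$ and $C_{ij}=\sgn(\tilde C_{ij})$. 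Applying Theorem~\ref{th:inter} to this factored form would then yield that $\sgn(\tilde A)$ is Hurwitz stable if and only if there exist $v_i\in\mathbb{R}_{>0}^{n_i}$ and multipliers $\ell_{ij}\in\mathbb{R}^{n_{ij}}$ satisfying \eqref{eq:kdkdQ2}. Note that the multipliers now live in $\mathbb{R}^{n_{ij}}$ rather than in $\mathbb{R}^{n_j}$, which is exactly the computational gain advertised when $n_{ij}<n_j$. Chaining this with the equivalence obtained from Theorem~\ref{th:inter2} then delivers the equivalence between (a) and (b).

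The one point requiring care, and the place I expect the only real friction, is the strict versus non-strict form of the second inequality: Theorem~\ref{th:inter} produces $v_i^T\sgn(\tilde B_{ij})-\ell_{ij}^T\le0$, whereas statement (b) demands the strict $v_i^T\sgn(\tilde B_{ij})-\ell_{ij}^T<0$. The hard part is thus to reconcile these, and I would do so with a standard perturbation argument. Starting from a feasible point of the non-strict system, the first inequality in \eqref{eq:kdkdQ2} is strict and hence admits a positive slack; I would therefore increase each $\ell_{ij}$ componentwise by a sufficiently small positive amount, which turns the corresponding second inequality strict while, by the nonnegativity of the matrices $\sgn(\tilde C_{ij})$ and the available slack, leaving every first inequality satisfied. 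The converse direction is immediate, since the strict inequalities of (b) a fortiori satisfy the non-strict conditions of Theorem~\ref{th:inter}. This closes the loop and completes the plan.
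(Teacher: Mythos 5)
Your proposal is correct and follows essentially the same route as the paper: the published proof likewise reduces statement (a) to the Hurwitz stability of $\sgn(\tilde A)$ via Theorem \ref{th:inter2} and then applies Theorem \ref{th:inter} under the identification $A_i=\sgn(\tilde A_i)$, $B_{ij}=\sgn(\tilde B_{ij})$, $C_{ij}=\sgn(\tilde C_{ij})$, made possible by the assumption $\sgn(\tilde B_{ij}\tilde C_{ij})=\sgn(\tilde B_{ij})\sgn(\tilde C_{ij})$. Your perturbation step reconciling the non-strict inequality $v_i^T\sgn(\tilde B_{ij})-\ell_{ij}^T\le0$ delivered by Theorem \ref{th:inter} with the strict one demanded in \eqref{eq:kdkdQ2} handles a detail the paper leaves implicit, and it is sound since the increment added to the first inequality, being a small nonnegative multiple of $\mathds{1}^T\sgn(\tilde C_{ji})$, is absorbed by its strict slack.
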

\blue{\begin{proof}
The proof follows the same lines as the proof of Theorem \ref{th:inter2} with the difference that we identify the matrix $\sgn(\tilde{A})$ with  the matrix \eqref{eq:jdqsjdlskdsjklsfldkj} together with $A_i=\tilde A_i$, $B_{ij}=\sgn(\tilde B_{ij})$ and $C_{ij}=\sgn(\tilde C_{ij})$, and where we have used the assumption that $\sgn(\tilde B_{ij}\tilde C_{ij})=\sgn(\tilde B_{ij})\sgn(\tilde C_{ij})$ for all $i,j=1,\ldots,N$, $i\ne j$. The rest of the proof follows from the application of Theorem \ref{th:inter}.
\end{proof}}

\blue{When the assumption that $\mathcal{Q}(\tilde B_{ij}\tilde C_{ij})=\mathcal{Q}(\tilde B_{ij})\mathcal{Q}(\tilde C_{ij})$ for all $i,j=1,\ldots,N$, $i\ne j$, is removed in the above results, then the conditions stated in the statements (b) and (c) are only sufficient for (a) to hold, but not necessary.}

\subsection{Example}

Let us consider the following matrices
\begin{equation}
  \begin{array}{lcllcllcl}
    A_1&=&\begin{bmatrix}
      \ominus & 0\\
      \oplus & \ominus
    \end{bmatrix},&B_{12}&=&\begin{bmatrix}
      \oplus\\ 0
    \end{bmatrix},&C_{21}&=&\begin{bmatrix}
    0\\  \oplus
    \end{bmatrix}^T,\\
    A_2&=&\begin{bmatrix}
      \ominus & 0\\
      0 & \ominus
    \end{bmatrix},&B_{21}&=&\begin{bmatrix}
      0\\ \oplus
    \end{bmatrix},&C_{12}&=&\begin{bmatrix}
    \oplus\\0
    \end{bmatrix}^T.
  \end{array}
\end{equation}
Clearly, the assumptions of Theorem \ref{th:inter2} are satisfied and the conditions \eqref{eq:kdkdQ2} write
\begin{equation}
  \begin{array}{rclrclrcl}
    -v_1^1+v_1^2&<&0,& -v_1^2+\ell_{21}&<&0,&v_1^1-\ell_{12}&\le&0,\\
    -v_2^1+\ell_{12}&<&0,&-v_2^2&<&0,&v_2^2-\ell_{21}&\le&0
  \end{array}
\end{equation}
where $v_i:=\col(v_i^1, v_i^2)$. These conditions are equivalent to $v_1^2<v_1^1$, $\ell_{12}\in[v_1^1,v_2^1)$ and $\ell_{21}\in[v_2^2,v_1^2)$. The conditions are readily seen to be feasible. For instance, we may pick $v_1=(5,3)$, $v_2=(7,1)$, $\ell_{12}=6$ and $\ell_{21}=2$, proving then that the block matrix $\bar{A}$ is sign-stable. If, however, the matrix $A_2$ is changed to $$A_2=\begin{bmatrix}
      \ominus & \oplus\\
      0 & \ominus
    \end{bmatrix},$$
    then $\bar{A}$ is not sign-stable anymore and the conditions \eqref{eq:kdkdQ2} are infeasible.

\section{Sign-stability of a convex-hull of Metzler matrices}\label{sec:hull}

\blue{We address in this section the problem of establishing whether, for some given family of Metzler sign-matrices $A_i\in\mathbb{MS}^n$, $i=1,\ldots,N$,  all the matrices in the convex-hull  $\mathbf{co}(A_1,\ldots,A_N)$ (when it is well-defined) are sign-stable. We first derive several intermediate results that will allow us to state under what conditions on the matrices $A_1,\ldots,A_N$, the convex-hull $\mathbf{co}(A_1,\ldots,A_N)$ exists;  i.e. $\mathbf{co}(A_1,\ldots,A_N)\subset\mathbb{MS}^n$. We will show that this problem is equivalent to the sign-summability problem of the matrices  $A_1,\ldots,A_N$. Then, we show that establishing whether  all the matrices in the convex hull of the matrices in $\mathbf{co}(A_1,\ldots,A_N)$ are sign-stable is equivalent to checking the sign-stability of $N+1$ Metzler sign-matrices, a problem having solutions that can be formulated in various ways including graph theoretical conditions or linear programs. The second problem is concerned with the existence of common quadratic/linear Lyapunov functions for a sign-summable family of Metzler sign-matrices $A_1,\ldots,A_N$. \bluee{It will be shown that a necessary and sufficient condition for the existence of such Lyapunov functions is that all the matrices in the convex-hull be sign-stable. This result can be opposed to a well-known result stating that for a family of Metzler matrices, the stability of the associated convex-hull does not necessarily imply the existence of common Lyapunov function; e.g. \cite{Fornasini:10}.}}

\begin{define}\label{def:QCS}
  Let $\mathcal{A}\subset\mathbb{MS}^{n}$ be a set of Metzler sign-matrices. The qualitative class of $\mathcal{A}$ is defined as
  \begin{equation}
    \mathcal{Q}(\mathcal{A})=\bigcup_{M\in\mathcal{A}}\mathcal{Q}(M).
  \end{equation}
\end{define}

\begin{define}
Let $A_i\in\mathbb{MS}^n$, $i=1,\ldots,N$, be given Metzler sign-matrices. We say that the convex-hull of  the matrices $A_1,\ldots,A_N$ defined as
  \begin{equation}\label{eq:coA}
  \mathbf{co}(A_1,\ldots,A_N):=\left\{\sum_{i=1}^N\alpha_i A_i:\ \alpha\ge0,\ \sum_{i=1}^N\alpha_i=1\right\}
\end{equation}
is well-defined if $\mathbf{co}(A_1,\ldots,A_N)\subset\mathbb{MS}^n$.
\end{define}

\begin{define}
  Let us consider a family of Metzler sign-matrices $A_i\in\mathbb{MS}^n$, $i=1,\ldots,N$. We say that the family is sign-summable or that the matrices are sign-summable if $\textstyle\sum_{i=1}^NA_i\in\mathbb{MS}^n$.
\end{define}

\bluee{Nonnegative matrices are trivially sign-summable as all the entries are nonnegative and, hence, the sign-summability problem did not occur in the previous section. However, the sign of the diagonal elements of Metzler matrices is indefinite and this may cause their sum to have indefinite sign. A simple necessary and sufficient condition for the sign-summability of Metzler sign-matrices is given in the result below together with a full characterization of well-defined convex-hulls of Metzler sign-matrices:
\begin{proposition}\label{prop:summable}\label{prop:hulldef}
  Let us consider a family of Metzler sign-matrices $A_i\in\mathbb{MS}^n$, $i=1,\ldots,N$. Then, the following statements are equivalent:
  \begin{enumerate}[(a)]
      \item The matrices $A_1,\ldots,A_N$ are sign-summable.
    \item For each fixed $j=1,\ldots,n$, the non-zero elements in the set $\{[A_i]_{jj}:i=1,\ldots,N\}$ have the same sign.
    \item The convex-hull $\mathbf{co}(A_1,\ldots,A_N)$ is well-defined and we have that
\begin{equation}\label{eq:basishull}
   \mathbf{co}(A_1,\ldots,A_N)=\left\{\sum_{i=1}^N\beta_i A_i:\ \beta\in\{0,1\}^N,\beta\ne0\right\}
\end{equation}
together with
\begin{equation}\label{eq:basishullQ}
   \mathcal{Q}(\mathbf{co}(A_1,\ldots,A_N))=\bigcup_{\substack{\beta\in\{0,1\}^N\\\beta\ne0}}\mathcal{Q}\left(\sum_{i=1}^N\beta_i A_i\right).
\end{equation}
  \end{enumerate}
\end{proposition}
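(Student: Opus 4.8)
The plan is to exploit the fact that the only obstruction to a sign-sum or a positive sign-scaling being well-defined lives on the diagonal. Since each $A_i$ is Metzler, every off-diagonal entry $[A_i]_{jk}$, $j\ne k$, lies in $\{0,\oplus\}$; hence any sign-sum $\bigoplus_i[A_i]_{jk}$ and any positive scaling of it stays in $\{0,\oplus\}$ and is automatically a well-defined nonnegative sign. Consequently, for every statement below it suffices to control the diagonal positions $j=k$, where the entries range over all of $\mathbb{S}$ and only there can the forbidden combination $\oplus+\ominus$ occur. I would also record at the outset the elementary but decisive \emph{positive-scalar invariance} of the sign-algebra: for a real scalar $\alpha>0$ and any sign $s\in\mathbb{S}$ one has $\alpha\cdot s=s$, while $0\cdot s=0$; that is, multiplying a sign-matrix by a positive real leaves it unchanged.

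With these two observations, proving (a)$\Leftrightarrow$(b) is a direct entrywise computation. The sum $\sum_{i=1}^NA_i$ is a well-defined element of $\mathbb{M}^n$ exactly when each entry $\bigoplus_i[A_i]_{jk}$ is a well-defined sign. By the first observation the off-diagonal entries are never problematic, so the sum is well-defined (and then necessarily Metzler, the off-diagonal part being nonnegative) if and only if, for each fixed $j$, the nonzero diagonal signs $\{[A_i]_{jj}:i=1,\ldots,N\}$ never contain both a $\oplus$ and a $\ominus$, which is precisely condition (b).

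For (b)$\Rightarrow$(c) I would first establish that the whole convex-hull is well-defined: for any $\alpha\ge0$ with $\sum_i\alpha_i=1$, the diagonal entry of $\sum_i\alpha_iA_i$ at position $j$ is the sign-sum of $\{\alpha_i[A_i]_{jj}\}$, and since the factors $\alpha_i>0$ do not alter signs, the sign-consistency over the full index set granted by (b) forces consistency over the active subset $\{i:\alpha_i>0\}$; hence every combination is a well-defined Metzler sign-matrix. The finite-set collapse \eqref{eq:basishull} is then the heart of the argument: writing $S=\{i:\alpha_i>0\}$ (nonempty since $\sum_i\alpha_i=1$) and using $\alpha_iA_i=A_i$ for $i\in S$, one obtains $\sum_i\alpha_iA_i=\sum_{i\in S}A_i=\sum_i\beta_iA_i$ with $\beta\in\{0,1\}^N\setminus\{0\}$ defined by $\beta_i=1$ iff $i\in S$; conversely any such $\beta$ is realized by the normalized weights $\alpha_i=\beta_i/\sum_k\beta_k$, so the two sets coincide. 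Equation \eqref{eq:basishullQ} follows immediately by applying $\mathcal{Q}(\cdot)$ to the set \eqref{eq:basishull} through Definition \ref{def:QCS}, since the qualitative class of a set is the union of the qualitative classes of its members.

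Finally, (c)$\Rightarrow$(a) comes for free: if the convex-hull is well-defined then in particular the balanced combination $\tfrac1N\sum_iA_i$ is a well-defined element of $\mathbb{MS}^n$, and by positive-scalar invariance this matrix equals $\sum_iA_i$, which is exactly sign-summability. I expect the main obstacle to be phrasing the positive-scalar invariance and the resulting support-only dependence cleanly enough that the collapse \eqref{eq:basishull} reads as a genuine equality of sets rather than a heuristic; once that invariance is isolated, each implication reduces to a short entrywise verification.
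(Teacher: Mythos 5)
Your proposal is correct and follows essentially the same route as the paper: both arguments rest on the observation that only the diagonal entries can produce the indefinite combination $\oplus+\ominus$, together with the positive-scalar invariance $\alpha A_i=A_i$ for $\alpha>0$ (and $0\cdot A_i=0$), which collapses the convex combinations to binary support vectors $\beta\in\{0,1\}^N$ and yields \eqref{eq:basishull}--\eqref{eq:basishullQ}, while (c)$\Rightarrow$(a) uses the same $\tfrac{1}{N}\sum_i A_i$ element of the hull. Your explicit remark that sign-consistency over the full index set passes to every active subset (so that all partial sums $\sum_i\beta_iA_i$ exist) is a point the paper treats more tersely, but it is the same argument.
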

\begin{proof}
\textbf{Proof that (a) is equivalent to (b).} First note that because of the Metzler structure of the matrices, the nonnegative off-diagonal elements can be summed freely and only the diagonal elements are problematic. In this regard, the fact that (a) implies (b) is immediate. To prove the converse, we use contraposition and assume that (a) does not hold. This implies there exists a $k\in\{1,\ldots,n\}$ for which the set $\{[A_i]_{kk}:i=1,\ldots,N\}$ contains at least one positive entry and one negative entry. Let $i_1,i_2\in\{1,\ldots,N\}$ be such that $[A_{i_1}]_{kk}=\oplus$ and $[A_{i_2}]_{kk}=\ominus$. This then implies that the sum $A_{i_1}+A_{i_2}$ does not exist since the entry $[A_{i_1}+A_{i_2}]_{kk}=\oplus+\ominus$ has indefinite sign.

\textbf{Proof that (a) is equivalent to (c).} Assume that the statement (c) holds and hence that $\mathbf{co}(A_1,\ldots,A_N)$ is well-defined or, equivalently, that all the matrices in the convex-hull are Metzler sign-matrices. Observing now that the matrix $\textstyle\sum_{i=1}^NA_i=\frac{1}{N}\sum_{i=1}^NA_i\in\mathbf{co}(A_1,\ldots,A_N)$ implies that $\textstyle\sum_{i=1}^NA_i$ is a Metzler sign-matrix, which implies in turn that the matrices $A_1,\ldots,A_N$ are sign-summable and that (a) holds. Assume now that (a) holds. To prove that the convex-hull is well-defined, first note that for any $\alpha_i>0$, we have that $\alpha_i A_i=A_i$ as $\mathcal{Q}(\alpha_i A_i)=\mathcal{Q}(A_i)$ and that for $\alpha_i=0$ we have that $\alpha_iA_i=0_{n\times n}$ since $\mathcal{Q}(\alpha_i A_i)=\{0_{n\times n}\}$. Hence, only the sign of the $\alpha_i$'s matters (i.e. whether they are  zero or positive) and, in this respect, we can substitute each $\alpha_i$ in the definition of the convex-hull by a binary parameter $\beta_i\in\{0,1\}$ where $\beta_i=1$ if $\alpha_i>0$ and 0 otherwise. This naturally leads to the alternative expression for the convex-hull given in \eqref{eq:basishull}. Using now the sign-summability property of the matrices implies that  the convex-hull is well-defined.  The expression \eqref{eq:basishullQ} for the qualitative-class of the convex-hull then immediately follows from Definition \ref{def:QCS}.
\end{proof}

Now that the sign-summability of Metzler sign-matrices has been fully characterized, we can move on to the main objective of the section, that is, the characterization of the sign-stability of all the matrices in a well-defined convex-hull of Metzler sign-matrices:}
\begin{theorem}\label{th:family}
   Let us consider a family of Metzler sign-matrices $A_i\in\mathbb{MS}^n$, $i=1,\ldots,N$. Then, the following statements are equivalent:
\begin{enumerate}[(a)]
  \item The convex-hull $\mathbf{co}(A_1,\ldots,A_N)$ is well-defined and only contains sign-stable matrices.
  \item For all $\beta\in\{0,1\}^N$, $\beta\ne0$, the matrix $A(\beta):=\textstyle\sum_{i=1}^N\beta_iA_i$ exists and is sign-stable.
  \item The matrices $A_1,\ldots,A_N$ have negative diagonal elements and the matrix $\textstyle\sum_{i=1}^NA_i$ is sign-stable.
  \item The diagonal elements of the matrices $A_1,\ldots,A_N$ are negative and the directed graph $D_{\bar{A}}$ is acyclic.
\end{enumerate}
\end{theorem}
\begin{proof}
The equivalence between the statements (a) and (b) follows from Proposition \ref{prop:hulldef} while the equivalence between the statements (c) and (d) follows from Theorem \ref{th:struct}. Let us prove then the equivalence between (b) and (c). Assume that (b) holds. Hence, for all $\beta\in\{0,1\}^N$, $\beta\ne0$, the matrix $\textstyle A(\beta)$ exists and is sign-stable. Let $\{e_i\}_{i=1}^N$ be the natural basis of $\mathbb{R}^N$, then we get that $A(e_i)=A_i$. Therefore, for any $i=1,\ldots,N$, the matrix $A_i$ is sign-stable and, hence, it has negative diagonal elements. Considering, finally, $\beta=\mathds{1}_N$, we get that $A(\mathds{1}_N)=\textstyle\sum_{i=1}^NA_i$ is sign-stable and the statement (c) holds.

\bluee{Assume now that (c) holds and let $\textstyle A(\beta):=\sum_{i=1}^K\beta_iA_i$. From the facts that the Metzler matrices $A_i$ have negative diagonal elements and $\beta\ne0$, then we can state that $\textstyle A(\beta)\le\sum_{i=1}^NA_i$ for all $\beta\in\{0,1\}^N$, $\beta\ne0$. Hence, we have that $\textstyle \sgn(A(\beta))\le\sgn(\sum_{i=1}^NA_i)$ and thus
\begin{equation}\label{eq:dskdkmlsqkdm}
  \lambda_{PF}(\sgn(A(\beta)))\le\lambda_{PF}\left(\sgn\left(\sum_{i=1}^NA_i\right)\right)\ \textnormal{for all }\beta\in\{0,1\}^N,\beta\ne0
\end{equation}
and where $\lambda_{PF}(\cdot)$ denotes the Perron-Frobenius eigenvalue. By virtue of Theorem \ref{th:struct}, we know that $\textstyle\sum_{i=1}^NA_i$ is sign stable if and only if $\textstyle\sgn\left(\sum_{i=1}^NA_i\right)$ is Hurwitz stable. As a consequence, the inequality \eqref{eq:dskdkmlsqkdm} implies that if  $\textstyle\sum_{i=1}^NA_i$ is sign stable, then $A(\beta)$ is sign stable for any $\beta\in\{0,1\}^N$, $\beta\ne0$. The proof is complete.}
\end{proof}

\subsection{Existence of common Lyapunov functions}

We first recall the following result from \cite{Liberzon:99} that we slightly adapt to our setup:
\begin{lemma}[\cite{Liberzon:99}]\label{lem:liberzon}
  Let us consider $N$ upper-triangular matrices $A_i\in\mathbb{R}^{n\times n}$, $i=1,\ldots,N$. Then, the following statements are equivalent:
  \begin{enumerate}[(a)]
    \item The matrices $A_i$, $i=1,\ldots,N$, are Hurwitz stable.
    \item There exists a symmetric positive definite matrix $Q\in\mathbb{R}^{n\times n}$ such that $A_i^TQ+QA_i$ is negative definite for all $i=1,\ldots,N$.
  \end{enumerate}
\end{lemma}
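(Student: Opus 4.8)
The plan is to establish the nontrivial implication (a)$\Rightarrow$(b) by induction on the dimension $n$, exploiting the upper-triangular structure shared by all the $A_i$; the converse (b)$\Rightarrow$(a) is immediate, since a common $Q\succ0$ with $A_i^TQ+QA_i$ negative definite furnishes the quadratic Lyapunov function $V(x)=x^TQx$ that certifies the Hurwitz stability of each $A_i$ separately. For the base case $n=1$, each $A_i$ is a scalar $a_i<0$, and $Q=1$ works because $A_i^TQ+QA_i=2a_i<0$.

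For the inductive step I would partition each upper-triangular matrix as
\[
A_i=\begin{bmatrix} B_i & b_i\\ 0 & c_i\end{bmatrix},
\]
where $B_i\in\mathbb{R}^{(n-1)\times(n-1)}$ is upper-triangular, $b_i\in\mathbb{R}^{n-1}$, and $c_i\in\mathbb{R}$. Since the eigenvalues of an upper-triangular matrix are exactly its diagonal entries, the Hurwitz stability of $A_i$ forces $c_i<0$ and makes every leading block $B_i$ an upper-triangular Hurwitz matrix, so the induction hypothesis applies and yields a single $P=P^T\succ0$ with $B_i^TP+PB_i=:-R_i$ negative definite (hence $R_i\succ0$) for all $i$. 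I would then look for $Q$ in the block-diagonal form $Q_\epsilon=\diag\{P,\epsilon\}$, with $\epsilon>0$ to be fixed, and compute
\[
A_i^TQ_\epsilon+Q_\epsilon A_i=\begin{bmatrix} -R_i & Pb_i\\ b_i^TP & 2\epsilon c_i\end{bmatrix}.
\]

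The decisive step is to select one $\epsilon$ rendering all these matrices negative definite at once. Because the leading block $-R_i$ is negative definite, the Schur complement criterion reduces the negative definiteness of the displayed matrix to the scalar inequality $2\epsilon c_i+b_i^TPR_i^{-1}Pb_i<0$, that is, using $c_i<0$, to $\epsilon>\dfrac{b_i^TPR_i^{-1}Pb_i}{2|c_i|}$; as there are only finitely many indices, taking $\epsilon$ above the maximum of these finitely many thresholds produces a common certificate $Q_\epsilon$ and closes the induction. The main obstacle I anticipate is precisely this uniform choice of $\epsilon$ together with the Schur-complement bookkeeping: one must check that the off-diagonal coupling $Pb_i$ is dominated by the large negative term $2\epsilon c_i$ on the last coordinate, which is what letting $\epsilon$ grow accomplishes. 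A secondary point meriting care is the verification, noted above, that the leading principal block inherits both upper-triangularity and Hurwitz stability, so that the induction hypothesis genuinely applies at each stage.
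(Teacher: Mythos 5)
Your proof is correct: the block computation $A_i^TQ_\epsilon+Q_\epsilon A_i=\begin{bmatrix}-R_i & Pb_i\\ b_i^TP & 2\epsilon c_i\end{bmatrix}$ is right, the Schur complement of the $(1,1)$ block gives exactly the scalar condition $2\epsilon c_i+b_i^TPR_i^{-1}Pb_i<0$, and since there are finitely many indices the uniform choice of $\epsilon$ is legitimate, while the verification that each leading block $B_i$ is upper-triangular Hurwitz (eigenvalues on the diagonal) makes the induction hypothesis applicable. Note that the paper itself offers no proof of this lemma --- it is recalled directly from Liberzon, Hespanha and Morse --- and your induction is essentially the classical argument of that reference, which constructs a common quadratic Lyapunov function for simultaneously triangularized families by precisely this kind of recursive weighting of the last coordinate; your write-up is thus a faithful self-contained reconstruction rather than a different route.
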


We then have the following result:
\begin{theorem}\label{th:common}
The following statements are equivalent:
  \begin{enumerate}[(a)]
    \item All the matrices in $\mathbf{co}(\mathcal{A})$ are sign-stable.
    \item For all $(A_1',\ldots,A_N')\in\mathcal{Q}(A_1)\times\ldots\times\mathcal{Q}(A_N)$, there exists a positive definite matrix $Q\in\mathbb{R}^{n\times n}$ such that $\tilde{A}^TQ+Q\tilde{A}$ is negative definite for all $\tilde{A}\in\mathbf{co}\{A_1',\ldots,A_N'\}$.
    \item For all $(A_1',\ldots,A_N')\in\mathcal{Q}(A_1)\times\ldots\times\mathcal{Q}(A_N)$, there exists a positive vector $v\in\mathbb{R}_{>0}^{n}$ such that  $v^T\tilde{A}<0$ for all $\tilde{A}\in\mathbf{co}(A')$.
  \end{enumerate}
\end{theorem}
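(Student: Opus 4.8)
The plan is to route every implication through the structural characterization of statement~(a) established in Theorem~\ref{th:family}: the convex-hull is well-defined and sign-stable if and only if every $A_i$ has negative diagonal entries and the directed graph $D_{\bar A}$ associated with $\bar A:=\sum_{i=1}^N A_i$ is acyclic. The crucial consequence I would extract first is \emph{simultaneous triangularization}: since each off-diagonal nonzero entry of $A_i$ also appears in $\bar A$, the graph $D_{A_i}$ is a subgraph of $D_{\bar A}$, so any topological ordering of $D_{\bar A}$ triangularizes all the $A_i$ at once. Hence there is a single permutation matrix $P$, depending only on the sign pattern, such that $P^T A_i P$ is upper-triangular with negative diagonal for every $i$, and the same holds for every representative $A_i'\in\mathcal{Q}(A_i)$, since representatives preserve the zero/nonzero pattern.

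With this in hand I would prove (a)$\Rightarrow$(b) and (a)$\Rightarrow$(c) together. Fix an arbitrary tuple $(A_1',\ldots,A_N')$. For (b), the matrices $P^T A_i' P$ are upper-triangular and Hurwitz, so Lemma~\ref{lem:liberzon} furnishes a common positive definite $Q'$ with $(P^T A_i' P)^T Q'+Q'(P^T A_i' P)$ negative definite; setting $Q:=PQ'P^T$ and using that $\tilde A\mapsto \tilde A^T Q+Q\tilde A$ is linear in $\tilde A$ propagates negative definiteness to the whole hull $\mathbf{co}\{A_1',\ldots,A_N'\}$. For (c), I would build a common linear Lyapunov function directly from the triangular structure: in the permuted basis, choose weights $w_1,w_2,\ldots$ greedily so that in each column the negative diagonal contribution dominates the finitely many nonnegative off-diagonal contributions coming from all $N$ matrices; this yields $w>0$ with $w^T(P^T A_i' P)<0$ for all $i$, whence $v:=Pw>0$ satisfies $v^T A_i'<0$, and convexity then gives $v^T\tilde A<0$ on the entire hull.

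For the converse directions (b)$\Rightarrow$(a) and (c)$\Rightarrow$(a) I would use a decomposition argument. Given $M\in\mathbf{co}(\mathcal{A})$ and an arbitrary $M'\in\mathcal{Q}(M)$, I claim $M'$ can be written as a convex combination $\sum_i\beta_i A_i'$ with $A_i'\in\mathcal{Q}(A_i)$: fixing uniform weights on the active indices, each positive off-diagonal entry and each negative diagonal entry of $M'$ can be matched by choosing the independent entries of the $A_i'$ appropriately, because the sign pattern of $M$ is the supremum of those of the active $A_i$. Then $M'\in\mathbf{co}\{A_1',\ldots,A_N'\}$, so (b) yields a positive definite $Q$ with $M'^T Q+QM'$ negative definite, and (c) yields $v>0$ with $v^T M'<0$; since $M'$ is a real Metzler matrix, either certificate forces $M'$ Hurwitz (directly, or via Lemma~\ref{lem:Metzler_stab}). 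As $M'$ and $M$ were arbitrary, (a) follows.

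The main obstacle I anticipate is twofold, and both difficulties are structural rather than computational. First, obtaining \emph{common} rather than matrix-dependent Lyapunov certificates is exactly where acyclicity does the heavy lifting; without simultaneous triangularization, stability of the hull would not suffice, as the contrast with general Metzler families noted before the theorem shows. Second, the decomposition $M'=\sum_i\beta_i A_i'$ must be checked carefully to ensure each constructed $A_i'$ genuinely lies in $\mathcal{Q}(A_i)$, with correct signs on every entry, while the same weights $\beta_i$ reproduce all entries of $M'$ at once; this is precisely what allows a hypothesis quantified over all tuples to be instantiated at the single matrix $M'$.
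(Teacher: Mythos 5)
Your proposal is correct and follows essentially the same route as the paper: simultaneous upper-triangularization of all the $A_i$ (and all representatives in their qualitative classes) from acyclicity of $D_{\bar A}$, then Lemma \ref{lem:liberzon} for the common quadratic certificate and a recursive (greedy) weight construction for the common linear certificate, with convexity extending both to the whole hull. The only difference is one of detail: the paper dismisses (b)$\Rightarrow$(a) and (c)$\Rightarrow$(a) as immediate, whereas you make them explicit by decomposing an arbitrary $M'\in\mathcal{Q}(M)$, $M\in\mathbf{co}(\mathcal{A})$, as a convex combination $\sum_i\beta_i A_i'$ with $A_i'\in\mathcal{Q}(A_i)$ and then invoking Lemma \ref{lem:Metzler_stab} --- a valid and welcome elaboration of the same argument, not a different one.
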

\bluee{\begin{proof}
The implication that (b) implies (a) and (c) implies (a) are immediate. Assume that (a) holds and note that, by virtue of Theorem \ref{th:struct}, the sign-stability of the $A_i$'s and their sums is equivalent to saying that they can be simultaneously expressed as Hurwitz stable upper-triangular matrices through appropriate simultaneous row/column permutations. Assuming now that they are in such a form then the conclusion directly follows from Lemma \ref{lem:liberzon}. The proof that (a) implies (c) follows from  fact that for all upper-triangular Hurwitz stable $A_i'\in\mathcal{Q}(A_i)$, $i=1,\ldots,N$, we can recursively construct a $v\in\mathbb{R}_{>0}^n$ such that $v^TA_i'<0$ for all $i=1,\ldots,N$.
\end{proof}}

\section{Ker$_+(B)$-sign-stability of  Metzler sign-matrices}\label{sec:relative}

\subsection{Preliminaries}

In \blue{this section}, we will work with a larger class of sign-matrices, referred to as \emph{indefinite sign-matrices}, taking entries in $\mathbb{S}_\odot:=\mathbb{S}\cup\{\odot\}$ where $\odot$ denotes a sign indefinite entry that can be obtained from the rule $\oplus+\ominus=\odot$. To avoid confusion, sign-matrices will sometimes be referred to as \emph{definite sign-matrices}.

\begin{define}[Sign-qualitative class]
  The \emph{sign-qualitative class} associated with a matrix $A\in\mathbb{S}_\odot^{n\times m}$ is defined as
 \begin{equation*}
  \mathcal{SQ}(A):=\begin{Bmatrix}
  M\in\mathbb{S}^{n\times m} \hspace{-2mm}& \vline & \hspace{-2mm}[M]_{ij}\left\{\begin{array}{lcl}
    \hspace{-2mm}\in\mathbb{S}& \hspace{-2mm}\textnormal{if} & \hspace{-2mm}[A]_{ij}=\odot\hspace{-2mm}\\
    \hspace{-2mm}=[A]_{ij} & \multicolumn{2}{l}{${\hspace{-2mm}}\textnormal{otherwise}$\hspace{-2mm}}
  \end{array}\right\}\hspace{-1mm}
  \end{Bmatrix}.
\end{equation*}
\end{define}
It is important to stress that, due to the presence of sign-indefinite entries $\odot$, all the matrices in $\mathcal{SQ}(A)$ do not have the same sign-pattern. When there is no sign-indefinite entry in $A$, then $\mathcal{SQ}(A)=A$. In accordance with Definition \ref{def:QCS}, we have that
\begin{equation}
  \mathcal{Q}(\mathcal{SQ}(A)):=\bigcup_{A'\in\mathcal{SQ}(A)}\mathcal{Q}(A')
\end{equation}
which defines the set of all real matrices sharing the same sign-pattern of at least one matrix in the sign-qualitative class of the matrix $A$.

Let us define now the concept of Ker$_+(B)$-sign-stability:
\begin{define}
Let $A\in\mathbb{MS}^{n\times n}$ and $B\in\mathbb{R}^{n\times\ell}$, $\ell<n$, full-rank. We say that \textbf{$A$ is Ker$_+$($B$)-sign-stable} if for all $A^\prime\in\mathcal{Q}(A)$, there exists a $v\in\mathbb{R}_{>0}^n$ such that $v^TA^\prime<0$ and $v^TB=0$.
\end{define}
\blue{The main reason for considering the above concept is that finding such a vector is a problem which arises, for instance, in the analysis of certain stochastic Markov jump processes that can represent certain biological processes \cite{Anderson:15,Briat:13i,Briat:15e}. When $B=0$, the usual notion of sign-stability is retrieved.}

We will also need the concept of an inverse of a sign-matrix. This existence problem of inverses of sign-matrices has been well-studied in the literature; see e.g. \cite{Johnson:83,Berger:87,Thomassen:89,Eschenbach:99}. We consider here the following definition:
\blue{\begin{define}
 We say that the matrix $A\in\mathbb{MS}^{n}$ is invertible if there exists a matrix $\tilde A\in\mathbb{S}^{n\times n}$ such that for all $M\in\mathcal{Q}(A)$, there exists a matrix $N\in\mathcal{Q}(\tilde A)$ such that $MN=I$. When this is the case, we denote the inverse of $A$, $\tilde A$, by $A^{-1}$.
\end{define}
We then have the following result:
\begin{theorem}\label{th:inverse}
  Assume that the sign-matrix $A\in\mathbb{MS}^n$ is sign-stable. Then,
  \begin{enumerate}[(a)]
        \item the inverse of the sign-matrix $A$ exists, is nonpositive, and is such that $[A^{-1}]_{ii}=\ominus$, $i=1,\ldots,n$, and
            \begin{equation*}
              [A^{-1}]_{ij}=\left\{\begin{array}{lcl}
              \ominus&&\textnormal{if there is a path from node }j\textnormal{ to node }i\textnormal{ in the directed graph }D_A. \\
              0&&\textnormal{otherwise}
            \end{array}\right.
            \end{equation*}
            for all $i,j=1,\ldots,n$, $i\ne j$.
       \item we have that $\mathcal{Q}(A)^{-1}:=\left\{M^{-1}:\ M\in\mathcal{Q}(A)\right\}\subset\mathcal{Q}(A^{-1})$.
  \end{enumerate}
\end{theorem}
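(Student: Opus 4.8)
The plan is to reduce the whole statement to a finite Neumann (power–series) expansion of the inverse whose support and signs are governed entirely by the digraph $D_A$. First I would invoke Theorem \ref{th:struct}: since $A$ is sign-stable, every $A'\in\mathcal{Q}(A)$ is Hurwitz stable, the digraph $D_A$ is acyclic, and the diagonal entries of $A$ are negative. Fixing an arbitrary $A'\in\mathcal{Q}(A)$, Lemma \ref{lem:Metzler_stab} gives at once that $A'$ is nonsingular with $(A')^{-1}\le 0$; this secures nonpositivity of the inverse and serves as a cross-check once the support has been identified combinatorially.

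Next I would write $-A'=\Delta-R$, where $\Delta=\diag(-[A']_{11},\ldots,-[A']_{nn})$ has strictly positive diagonal (the negative diagonal of $A'$) and $R$ collects the off-diagonal entries of $A'$, so that $R\ge 0$ by the Metzler property with $[R]_{ij}>0$ precisely when $[A]_{ij}=\oplus$. The matrix $\Delta^{-1}R$ is nonnegative, and its $(i,j)$ entry is positive exactly when $D_A$ contains the edge $(j,i)$; since $D_A$ is acyclic, $\Delta^{-1}R$ is nilpotent. Hence
\[
(A')^{-1}=-(\Delta-R)^{-1}=-\Bigl(\textstyle\sum_{k\ge 0}(\Delta^{-1}R)^k\Bigr)\Delta^{-1}
\]
is a finite sum. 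Reading off the pattern: for $i\ne j$ the $(i,j)$ entry of $\sum_k(\Delta^{-1}R)^k$ is a sum of products of strictly positive weights taken over all walks from $i$ to $j$ in the edge set of $\Delta^{-1}R$, which correspond exactly to directed paths from $j$ to $i$ in $D_A$; the $k=0$ term supplies the strictly positive diagonal $d_i^{-1}$, and acyclicity forbids closed walks so no extra diagonal contribution appears. Multiplying by the positive diagonal $\Delta^{-1}$ preserves these supports and signs. This yields $[(A')^{-1}]_{ii}=\ominus$ and, for $i\ne j$, $[(A')^{-1}]_{ij}=\ominus$ if $D_A$ has a path from $j$ to $i$ and $[(A')^{-1}]_{ij}=0$ otherwise, matching the claimed formula.

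The decisive observation is that this sign pattern depends only on the zero/nonzero structure of $A$ and not on the actual positive values, hence is \emph{identical} for every $A'\in\mathcal{Q}(A)$. This common sign-matrix is the desired $\tilde A=A^{-1}$, proving part (a), while the identity $A'(A')^{-1}=I$ together with $(A')^{-1}\in\mathcal{Q}(A^{-1})$ verifies the definition of invertibility of the sign-matrix and yields part (b), namely $\mathcal{Q}(A)^{-1}\subset\mathcal{Q}(A^{-1})$. I expect the main obstacle to be the \emph{strictness and uniformity} of the signs: one must argue that, for every admissible choice of positive weights, the entries flagged $\ominus$ are genuinely negative (never vanishing through cancellation) and those flagged $0$ are exactly zero. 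This is precisely where the Metzler structure is essential — nonnegativity of $R$ makes all Neumann terms nonnegative, so the support of the series is the combinatorial reachability set of $D_A$ with no sign cancellation possible, and the nonpositivity supplied by Lemma \ref{lem:Metzler_stab} confirms consistency.
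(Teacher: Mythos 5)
Your proof is correct, and it reaches the paper's conclusion by a genuinely different mechanism. The paper first reduces, without loss of generality, to the case where $A$ is upper-triangular with negative diagonal (via the permutation guaranteed by Theorem \ref{th:struct}), parametrizes the qualitative class by a matrix $M(m)$ with free positive weights, and computes the inverse by explicit back-substitution in the linear system $y=M(m)x$, reading off from the recursion that each off-diagonal entry of the inverse is a negative sum of products of positive weights taken over the directed paths of $D_A$. You avoid the permutation step entirely: the splitting $-A'=\Delta-R$ with $\Delta$ a positive diagonal and $R\ge0$, together with nilpotency of $\Delta^{-1}R$ (which is exactly acyclicity of $D_A$), turns the inverse into the finite Neumann sum $-\bigl(\sum_{k\ge0}(\Delta^{-1}R)^k\bigr)\Delta^{-1}$, whose support is the reachability relation of $D_A$ --- the same path-sum structure the paper extracts recursively, but obtained globally and coordinate-free, with the walk-equals-path identification in a DAG doing the bookkeeping. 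What each approach buys: the paper's back-substitution yields fully explicit closed-form entries (which it displays for the last few coordinates), whereas your expansion makes the two delicate points --- that entries flagged $\ominus$ can never vanish by cancellation, and entries flagged $0$ are exactly zero --- immediate, since every term in the sum is nonnegative; it also isolates cleanly where each hypothesis enters (Metzlerness gives $R\ge0$, sign-stability gives the negative diagonal and the acyclicity that forces nilpotency). Your invocation of Lemma \ref{lem:Metzler_stab} for nonpositivity is strictly redundant, since the expansion already delivers it, but it is harmless as a consistency check; and your deduction of part (b) --- the sign pattern of $(A')^{-1}$ depends only on reachability in $D_A$, hence is common to every $A'\in\mathcal{Q}(A)$, which verifies the paper's definition of sign-matrix invertibility via $A'(A')^{-1}=I$ --- coincides with the paper's own argument.
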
}
\blue{\begin{proof}
To prove statement (a), we first assume, without loss of generality, that the sign-matrix $A$ is upper-triangular with negative diagonal elements. The key idea behind the proof is to demonstrate the existence of an inverse for the matrix $A$ by showing that the inverses of all the matrices in the qualitative class $\mathcal{Q}(A)$ all share the same sign-pattern. To this aim, let us introduce the following matrix
\begin{equation}
  M(m):=-\sum_{i}^nm_{i,i}e_ie_i^T+\sum_{j>i}^nm_{i,j}e_ie_j^T
\end{equation}
where $m_{i,j}\ge0$ for all $i,j=1,\ldots,N$, $j\ge i$. Note, moreover,  that we have
\begin{equation}
  \mathcal{Q}(A)=\{M(m): m_{i,j}>0\ \textnormal{if\ }(i,j)\in\mathcal{I}(A),\ m_{i,j}=0\ \textnormal{otherwise}\}
\end{equation}
where
\begin{equation}
\mathcal{I}(A):=\left\{(i,j):[A]_{ij}\ne0,\ i,j=1,\ldots,N, j\ge i\right\}.
\end{equation}
Since, by assumption, the matrix $A$ is sign-stable, then its diagonal elements are negative (and hence  $m_{i,i}>0$ for all $i=1,\ldots,N$), the directed graph $D_A$ is acyclic and all the matrices in $\mathcal{Q}(A)$ are invertible. Considering further the system of linear equations given by $y=M(m)x$ and recursively solving for $x$ yields
\begin{equation}
  \begin{array}{rcl}
    x_n&=&\dfrac{-1}{m_{n,n}}y_n\\
    x_{n-1}&=&\dfrac{1}{m_{n-1,n-1}}\left(-y_{n-1}+m_{n-1,n}x_n\right)\\
                &=&\dfrac{-1}{m_{n-1,n-1}}\left(y_{n-1}+\dfrac{m_{n-1,n}}{m_{n,n}}y_n\right)\\
    x_{n-2}&=&\dfrac{-1}{m_{n-2,n-2}}\left(y_{n-2}+\dfrac{m_{n-2,n-1}}{m_{n-1,n-1}}y_{n-1}+\left(\dfrac{m_{n-2,n-1}m_{n-1,n}}{m_{n-1,n-1}m_{n,n}}+\dfrac{m_{n-2,n}}{m_{n,n}}\right)y_n\right)\\
     & \vdots & \\
    x_{n-k}&=&\dfrac{1}{m_{n-k,n-k}}\left(-y_{n-k}+\sum_{i=n-k+1}^nm_{n-k,i}x_i\right),\ k=3,\ldots,n-1.
  \end{array}
\end{equation}
As expected the diagonal entries of $M(m)^{-1}$ are just the inverse of the diagonal entries of $M(m)$. The off-diagonal entries of $M(m)^{-1}$ are defined recursively over all possible paths in the graph $D_A$ from one node to another. For instance, the entry $[M(m)^{-1}]_{n-2,n}$ is nonzero if there is a path of length one from the node $n$ to the node $n-2$ in the directed graph $D_A$ (i.e. $m_{n-2,n}\ne0$) or a path of length two passing through the node $n-1$ (i.e. $m_{n-2,n-1}m_{n-1,n}\ne0$). This argument easily generalizes to all the entries of the inverse of the matrix  $M(m)^{-1}$ by exploiting the acyclicity of the graph $D_A$ and the nonnegativity of the off-diagonal elements of $M(m)$. As a consequence, all the nonzero entries are necessarily negative and, hence, the matrix $M(m)^{-1}$ is nonpositive. Uniqueness follows from the fact the sign-pattern of the inverse only depends on the paths in the graph $D_A$ which are, in turn, uniquely defined by the sign-matrix $A$. Consequently, an entry of $M(m)^{-1}$ is 0 if and only if there is no path from $j$ to $i$ in $D_A$, otherwise it is negative. This proves statement (a). To prove statement (b), first remark that since $A$ is a sign-stable Metzler sign-matrix, then from statement (a) we have that the matrix $A^{-1}$ is well-defined and is nonpositive with negative diagonal elements. Hence, statement (b) holds since all the inverses of the matrices in $\mathcal{Q}(A)$ have the same sign-pattern as $A^{-1}$. The proof is complete.
\end{proof}}

Two important facts are stated in the above result. First of all, the sign-pattern of the inverse sign-matrix $A^{-1}$ is uniquely defined by the sign-pattern of $A$ provided that $A$ is sign-stable. Secondly, that for any sign-stable matrix $A\in\mathbb{S}^{n\times n}$, the nonzero entries of the inverse matrix $A^{-1}$ can simply be deduced from the examination of the graph $D_A$. For comparison, it requires at most $n^2$ operations, which is better than usual real matrix inversion algorithms; see e.g. \cite{LeGall:14}. A drawback, however, is that by considering the matrix $A^{-1}$, we lose some information since the class of matrices that is considered is larger.

\begin{remark}
  It is important to stress that, for any $A\in\mathbb{MS}^n$, $A$ sign-stable, we have that $\mathcal{Q}(A)^{-1}\ne\mathcal{Q}(A^{-1})$. Indeed, the inverse of a Hurwitz stable real nonpositive matrix with negative diagonal is not necessarily Metzler, even in the upper-triangular case. In other words, the matrix inversion operation on the set of sign-stable upper-triangular nonpositive sign-matrices is not well-defined in the sense that the sign-pattern is non-uniquely defined. For instance, we have that
\begin{equation}
  M=\begin{bmatrix}
 -1   & -1&    -2  &  -4\\
     0  &  -1 &   -1 &   -5\\
     0    & 0   & -1  &  -1\\
     0    & 0    & 0&    -1
  \end{bmatrix} \textnormal{but}\ M^{-1}=\begin{bmatrix}
 -1  &   1   &  1&    -2\\
     0&    -1 &    1&     4\\
     0  &   0 &   -1  &   1\\
     0 &    0&     0  &  -1
  \end{bmatrix}
\end{equation}
is not a Metzler matrix.
\end{remark}

\blue{The following concept will be crucial for stating the main result of this section:
\begin{define}[$L^+$-matrices, \cite{Lee:98}]
  Let $A\in\mathbb{S}^{m\times n}$. We say that $A$ is an $L^+$-matrix if for all $A'\in\mathcal{Q}(A)$, the dual cone of $\{A'x:\ x\ge0\}$ defined as $\{y:y^TA'\ge0\}$ is equal to $\{0\}$.
\end{define}

Let us also recall the following result:
\begin{theorem}[\cite{Lee:98}, Theorem 2.4]\label{th:L+}
  Let $R\in\mathbb{S}^{m\times n}$. Then, the following statements are equivalent:
  \begin{enumerate}[(a)]
    \item $R$ is an $L^+$-matrix.
    \item $R$ has no zero row and for all $R'\in\mathcal{Q}(R)$, there exists a  $v\in\mathbb{R}_{>0}^n$ such that $R'v=0$.
    \item For all nonzero diagonal matrices $D\in\{-1,0,1\}^{m\times m}$, some column of $DR$ is nonzero and nonnegative.
     \item For all nonzero diagonal matrices $D\in\{-1,0,1\}^{m\times m}$, some column of $DR$ is nonzero and nonpositive.
  \item For all $R'\in\mathcal{Q}(R)$, we have that  $\{R'x:\ x\in\mathbb{R}_{\ge0}^n\}=\mathbb{R}^m$; i.e. the cone generated by $R'$ is all of $\mathbb{R}^m$.
  \end{enumerate}
\end{theorem}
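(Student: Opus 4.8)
The plan is to prove the five statements pairwise equivalent through the cycle (a)$\,\Leftrightarrow\,$(e), (a)$\,\Leftrightarrow\,$(d)$\,\Leftrightarrow\,$(c), and (d)$\,\Leftrightarrow\,$(b), using only two elementary tools: the bipolar theorem for finitely generated convex cones, and a \emph{magnitude-realization} argument converting sign patterns into genuine real witnesses. Throughout, for $R'\in\mathcal{Q}(R)$ I write $K(R'):=\{R'x:\ x\in\mathbb{R}^n_{\ge0}\}$ for the cone generated by the columns of $R'$; its dual cone is exactly $\{y:y^TR'\ge0\}$, so statement (a) reads $K(R')^\ast=\{0\}$ for every $R'$. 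Since $K(R')$ is finitely generated it is a closed convex cone, so $K(R')^{\ast\ast}=K(R')$, whence $K(R')^\ast=\{0\}$ iff $K(R')=\{0\}^\ast=\mathbb{R}^m$; this is precisely (e) read for each $R'$, giving (a)$\,\Leftrightarrow\,$(e). The equivalence (c)$\,\Leftrightarrow\,$(d) is then immediate from the involution $D\mapsto-D$ on nonzero diagonal sign-matrices, which interchanges "nonnegative column of $DR$" and "nonpositive column of $DR$".

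The heart of the argument is (a)$\,\Leftrightarrow\,$(d), a sign-level theorem of the alternative. For (d)$\,\Rightarrow\,$(a), fix any $R'$ and any $y\ne0$, set $D:=\diag(\sgn(y_i))$ (nonzero since $y\ne0$), and invoke (d) to obtain a column $j$ of $DR$ that is nonzero and nonpositive; every term $y_iR'_{ij}$ of $(y^TR')_j$ then carries the sign $(DR)_{ij}\le0$ with at least one term strictly negative, forcing $(y^TR')_j<0$, so $y^TR'\not\ge0$. Hence only $y=0$ satisfies $y^TR'\ge0$, which is (a). For the converse I argue by contraposition: if (d) fails there is a nonzero $D$ for which every column of $DR$ is either zero or contains a $\oplus$; taking $y$ with sign pattern $D$ and unit magnitudes on its support, I choose, column by column, the magnitudes of the free entries of some $R'\in\mathcal{Q}(R)$ so that each mixed or positive column gives $(y^TR')_j\ge0$ (letting a $\oplus$ term dominate any $\ominus$ terms) and each zero column gives $0$. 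This produces a witness $y\ne0$ with $y^TR'\ge0$, i.e. the failure of (a).

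It remains to attach statement (b). For (e)$\,\Rightarrow\,$(b): a zero row $i$ of $R$ would pin the $i$-th coordinate of every element of $K(R')$ to zero, contradicting $K(R')=\mathbb{R}^m$, so $R$ has no zero row; and since $-\sum_j R'_{\,\cdot\, j}\in K(R')=\mathbb{R}^m$, there are scalars $\alpha_j\ge0$ with $\sum_j(1+\alpha_j)R'_{\,\cdot\, j}=0$, so $v:=(1+\alpha_j)_j\in\mathbb{R}^n_{>0}$ satisfies $R'v=0$, giving (b). For (b)$\,\Rightarrow\,$(d) I again use contraposition: if (d) fails then either all columns of $DR$ vanish, which forces a zero row of $R$ and hence the failure of (b), or the construction of the previous paragraph can be refined to make some coordinate of $y^TR'$ strictly positive, yielding $y^TR'\ge0$ with $y^TR'\ne0$; then for any $v\in\mathbb{R}^n_{>0}$ we would have $y^TR'v>0=y^T(R'v)$, so this $R'$ admits no positive null vector and (b) fails. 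Assembling these implications closes all five equivalences without appealing to any theorem of the alternative beyond the cone duality already used.

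I expect the main obstacle to lie in the realization step inside (a)$\,\Leftrightarrow\,$(d): one must verify that a favourable sign pattern can always be lifted to an actual matrix of $\mathcal{Q}(R)$ paired with a real vector $y$, handling the mixed columns, where a single shared $y$ must be made compatible with dominating $\oplus$ contributions simultaneously across all columns, and cleanly separating the degenerate all-zero-column case, which is exactly the case feeding the no-zero-row clause of statement (b). The cone-duality arguments for (a)$\,\Leftrightarrow\,$(e) and (e)$\,\Rightarrow\,$(b) are routine by comparison; the care is entirely in the combinatorial bookkeeping that matches the nonnegativity/nonpositivity of columns of $DR$ to the sign of $y^TR'$.
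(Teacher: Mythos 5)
The paper contains no proof of this theorem to compare against: it is recalled verbatim from Lee and Shader (Theorem 2.4 of the cited reference) precisely so that it can be invoked as a black box in Proposition \ref{prop:KerB}. So your proposal must stand on its own, and as far as I can verify it does: the argument is correct, self-contained, and more elementary than the original development, which embeds the result in the theory of $L$-matrices and sign-central matrices. The individual steps all check out. In (d) $\Rightarrow$ (a), with $y\ne0$ and $D=\diag(\sgn(y_i))$, the nonzero entries of the nonzero nonpositive column $j$ of $DR$ necessarily lie in rows where $y_i\ne0$ (rows with $D_{ii}=0$ are annihilated in $DR$), so $(y^TR')_j$ is a sum of nonpositive terms, at least one strictly negative, giving the strict inequality $(y^TR')_j<0$ you need. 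The obstacle you flag in the contrapositive direction --- that a single shared $y$ must be compatible with the domination argument simultaneously across all columns --- is in fact vacuous: distinct columns of $R'$ involve disjoint sets of free entries, and $(y^TR')_j$ depends only on column $j$ of $R'$, so with $y$ fixed at unit magnitudes on the support of $D$ you may choose the magnitudes column by column, fully independently, letting the $\oplus$ entry dominate in each non-vanishing column. Your treatment of the degenerate case is also right: $DR=0$ with $D\ne0$ forces every row of $R$ in the support of $D$ to be zero, which is exactly the failure of the no-zero-row clause of (b); while if some column of $DR$ retains a $\oplus$, making that coordinate of $y^TR'$ strictly positive rules out every positive null vector $v$ via $y^TR'v>0=y^T(R'v)$. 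The duality steps are legitimate as well: $K(R')$ is finitely generated, hence closed, so the bipolar identity $K(R')^{\ast\ast}=K(R')$ yields (a) $\Leftrightarrow$ (e), and the shift $v=\mathds{1}_n+\alpha$ in (e) $\Rightarrow$ (b) cleanly produces a strictly positive null vector. Together with (c) $\Leftrightarrow$ (d) via $D\mapsto-D$, the cycle (b) $\Rightarrow$ (d) $\Rightarrow$ (a) $\Leftrightarrow$ (e) $\Rightarrow$ (b) closes all five equivalences. No gaps.
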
}

We are now ready to state the main result of this section which provides sufficient conditions for the Ker$_+$($B$)-sign-stability of a given matrix $A\in\mathbb{MS}^n$:

\begin{proposition}\label{prop:KerB}
Let $A\in\mathbb{MS}^{n\times n}$ be sign-stable and $B\in\mathbb{R}^{n\times \ell}$, $\ell<n$, be full-rank. 
Assume that one of the following equivalent statements holds:
\begin{enumerate}[(a)]
  \item All the matrices in $\mathcal{SQ}(B^TA^{-T})$ are $L^+$-matrices.
  \item All the matrices in $\mathcal{SQ}(B^TA^{-T})$ have no zero row and for all $M\in\mathcal{Q}(\mathcal{SQ}(B^TA^{-T})$, there exists a $w\in\mathbb{R}_{>0}^n$ such that $Mw=0$.
  \item For all nonzero diagonal matrices $D\in\{-1,0,1\}^{\ell\times \ell}$ and all $M\in\mathcal{SQ}(B^TA^{-T})$, some column of $DM$ is nonzero and nonnegative.
  \item For all nonzero diagonal matrices $D\in\{-1,0,1\}^{\ell\times \ell}$ and  all $M\in\mathcal{SQ}(B^TA^{-T})$, some column of $DM$ is nonzero and nonpositive.
 \item For all $M\in\mathcal{Q}(\mathcal{SQ}(B^TA^{-T})$, we have $\{Mx:\ x\in\mathbb{R}_{\ge0}^n\}=\mathbb{R}^\ell$.
\end{enumerate}
Then, the matrix $A$ is Ker$_+(B)$-sign-stable.
\end{proposition}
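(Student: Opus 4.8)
The plan is to reduce the existence of a constrained Lyapunov vector $v$ to a positive-kernel condition on the matrix $B^T(A')^{-T}$, and then to invoke the $L^+$-matrix characterization recalled in Theorem \ref{th:L+}. Fix an arbitrary $A'\in\mathcal{Q}(A)$; since $A$ is sign-stable, $A'$ is a Hurwitz-stable Metzler matrix, so by Lemma \ref{lem:Metzler_stab} and Theorem \ref{th:inverse} the inverse $(A')^{-1}$ is nonpositive with strictly negative diagonal entries. This sign information is the engine of the whole argument.

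First I would perform a change of variables that decouples the two requirements $v^TA'<0$ and $v^TB=0$. Given any candidate vector $w\in\mathbb{R}_{>0}^n$, set $v^T:=-w^T(A')^{-1}$. Because $(A')^{-1}\le0$ has a strictly negative diagonal, the $j$-th entry of $v$ equals $\sum_i w_i\big(-[(A')^{-1}]_{ij}\big)$, whose diagonal term is strictly positive and whose remaining terms are nonnegative; hence $v\in\mathbb{R}_{>0}^n$ automatically. Moreover $v^TA'=-w^T(A')^{-1}A'=-w^T<0$, so the Hurwitz-type inequality holds for free. Consequently the only remaining constraint is $v^TB=-w^T(A')^{-1}B=0$, which, after transposition, is exactly $B^T(A')^{-T}w=0$. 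The problem is thereby reduced to finding, for each $A'\in\mathcal{Q}(A)$, a vector $w\in\mathbb{R}_{>0}^n$ in the kernel of $B^T(A')^{-T}$.

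The crux of the proof is to show that $B^T(A')^{-T}$ always belongs to $\mathcal{Q}(\mathcal{SQ}(B^TA^{-T}))$. By Theorem \ref{th:inverse}(b) we have $(A')^{-1}\in\mathcal{Q}(A^{-1})$, hence $(A')^{-T}\in\mathcal{Q}(A^{-T})$, so every entry of $(A')^{-T}$ carries the fixed, nonpositive sign prescribed by $A^{-1}$. Each entry of the product $B^T(A')^{-T}$ is a sum of terms of the form $B_{ik}\,[(A')^{-1}]_{ji}$; since $B$ is real, these terms need not share a common sign, so the sum may be sign-indefinite, which is precisely what the entries $\odot$ of $B^TA^{-T}$ record. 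At every position where $B^TA^{-T}$ carries a definite sign the realized value of $B^T(A')^{-T}$ necessarily has that same sign (or is zero), while at the $\odot$-positions it may take either sign; this is exactly the statement that the sign-pattern of $B^T(A')^{-T}$ lies in $\mathcal{SQ}(B^TA^{-T})$, i.e. $B^T(A')^{-T}\in\mathcal{Q}(\mathcal{SQ}(B^TA^{-T}))$. Verifying this containment carefully is the main obstacle, since it requires tracking how the fixed real signs of $B$ interact with the signed but magnitude-free entries of the sign-inverse $A^{-1}$.

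Finally I would conclude by applying the $L^+$-machinery. Under hypothesis (a), every matrix in $\mathcal{SQ}(B^TA^{-T})$ is an $L^+$-matrix, so by the equivalence (a)$\Leftrightarrow$(b) of Theorem \ref{th:L+} (reproduced as statement (b) of the present proposition), for every $M\in\mathcal{Q}(\mathcal{SQ}(B^TA^{-T}))$ there exists $w\in\mathbb{R}_{>0}^n$ with $Mw=0$. Taking $M=B^T(A')^{-T}$ yields the required positive kernel vector $w$, and substituting it into $v^T=-w^T(A')^{-1}$ produces a vector $v\in\mathbb{R}_{>0}^n$ with $v^TA'<0$ and $v^TB=0$. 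Since $A'\in\mathcal{Q}(A)$ was arbitrary, $A$ is Ker$_+(B)$-sign-stable. The stated equivalence of (a)--(e) is not needed beyond this single implication, as those items are merely Theorem \ref{th:L+} transcribed for the matrices of $\mathcal{SQ}(B^TA^{-T})$.
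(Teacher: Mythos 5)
Your proposal is correct and follows essentially the same route as the paper's proof: the same substitution $v^T=-w^T(A')^{-1}$, the same chain of inclusions $\mathcal{Q}(A)^{-1}B\subseteq\mathcal{Q}(A^{-1})B\subseteq\mathcal{Q}(\mathcal{SQ}(A^{-1}B))$ justified via Theorem \ref{th:inverse}(b), and the same appeal to the equivalence (a)$\Leftrightarrow$(b) of Theorem \ref{th:L+} to produce the positive kernel vector $w$. If anything, your entry-wise argument that $v$ is \emph{strictly} positive (using the strictly negative diagonal of $(A')^{-1}$, not merely $(A')^{-1}\le0$) is slightly more careful than the paper's one-line justification of the same fact.
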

\blue{\begin{proof}
The equivalence between the statements of the result directly follows from Theorem \ref{th:L+}. Hence, we simply need to prove that (b), for instance, implies that the matrix $A$ is Ker$_+(B)$-sign-stable. From statement (b), we have that for all $M\in\mathcal{Q}(\mathcal{SQ}(B^TA^{-T})$, there exists a $w\in\mathbb{R}_{>0}^n$ such that $Mw=0$ and observe, moreover, that the following inclusions hold
\begin{equation}
  \mathcal{Q}(A)^{-1}B\subseteq \mathcal{Q}(A^{-1})B\subseteq \mathcal{Q}(\mathcal{SQ}(A^{-1}B)).
\end{equation}
The first inclusion follows from the statement (b) of Theorem \ref{th:inverse} while the second one is immediate from the fact that any matrix in $\mathcal{Q}(A^{-1})B$ necessarily belongs to $\mathcal{Q}(\mathcal{SQ}(A^{-1}B))$ since $\mathcal{SQ}(A^{-1}B)$ contains all the possible sign patterns that $\mathcal{Q}(A^{-1})B$ could have. Hence, if all the matrices in $\mathcal{SQ}(B^TA^{-T})$ are $L^+$-matrices, then for all $M\in B^T\mathcal{Q}(A)^{-T}$, there exist a vector $w\in\mathbb{R}_{>0}^n$ such that $Mw=0$. Decomposing $M$ as $M=B^T\tilde{A}^{-T}$, where $\tilde{A}\in\mathcal{Q}(A)$  is Hurwitz stable, yields $w^T\tilde{A}^{-1}B=0$. Letting $v^T:=-w^T\tilde{A}^{-1}$, we get that $v>0$ since $w>0$ and $\tilde{A}^{-1}\le0$. Hence, we have that $-v^TB=0$ or, equivalently, $v^TB=0$ together with $v^T\tilde A=-w^T<0$, which proves that the matrix $A$ is Ker$_+(B)$-sign-stable.
\end{proof}}

\blue{From the statements (c) and (d) of Theorem \ref{th:L+}, we can see that checking whether a matrix is an $L^+$-matrix is a combinatorial problem which turns out to be NP-complete (see \cite{Klee:84,Lee:98}). However, the approach remains applicable for matrices $R$ having a small number of rows. Applied to our problem, the number of sign-matrices on which to apply the column/row condition (see the statements  (c) and (d) of Proposition \ref{prop:KerB}) is equal to  $(3^\ell-1)p$ sign-matrices where $p$ is the cardinal of $\mathcal{SQ}(B^TA^{-T})$. Interestingly, the complexity does not directly depend on $n$, but indirectly depends on $n$ through the cardinality $p$ since a larger $n$ is likely to lead to a higher cardinality for the set $\mathcal{SQ}(B^TA^{-T})$. Finally, it is important to stress than the problem remains tractable as long as the values of $\ell$ and $p$ are sufficiently small.}

\section{Sign-stability of mixed matrices}\label{sec:partial}

As we have seen in the previous sections, sign-stability requires strong structural properties for the considered sign-matrices. The idea here is to extend the scope of sign-stability to mixed matrices which contain both sign and real entries. The idea is to enlarge the structures of the matrices (i.e. not necessarily triangular) that can be considered using such an approach. In this case, we will shown that cycles can be allowed for the nodes associated with real entries. We notably prove here that the sign-stability of a class of mixed matrices can be exactly characterized in terms of Hurwitz stability conditions, sign-stability conditions and the non-existence of cycles in a particular bipartite graph obtained from the mixed matrix.

\subsection{Preliminaries}

The following definition states an immediate extension of the sign-stability property of sign-matrices to the case of mixed matrices:
\begin{define}
  A mixed matrix $M\in\left(\mathbb{R}\cup\mathbb{S}\right)^{n\times n}$ is sign-stable if all the matrices $M^\prime\in\mathcal{Q}(M)$ are Hurwitz stable.
\end{define}

\blue{The following lemma states a result regarding the stability of Metzler block matrices which is analogous to the Schur complement formula:
\begin{lemma}\label{lem:SchurComplement}
  Let us consider a matrix $M\in\mathbb{MR}^{n_1+n_2}$ that we decompose as
  \begin{equation}
    M=:\begin{bmatrix}
      M_{11} & M_{12}\\
      M_{21} & M_{22}
    \end{bmatrix}
  \end{equation}
  where $M_{11}\in\mathbb{MR}^{n_1}$, $M_{22}\in\mathbb{MR}^{n_2}$, $M_{12}\in\mathbb{R}_{\ge0}^{n_1\times n_2}$ and $M_{21}\in\mathbb{R}_{\ge0}^{n_2\times n_1}$. Then, the following statements are equivalent:
  \begin{enumerate}[(a)]
    \item The matrix $M$ is Hurwitz stable.
    \item The matrices $M_{11}$ and $M_{22}-M_{21}M_{11}^{-1}M_{12}$ are Hurwitz stable.
    \item The matrices $M_{22}$ and $M_{11}-M_{12}M_{22}^{-1}M_{21}$ are Hurwitz stable.
  \end{enumerate}
\end{lemma}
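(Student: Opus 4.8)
The plan is to establish the equivalence (a) $\Leftrightarrow$ (b) and then deduce (a) $\Leftrightarrow$ (c) by the symmetric argument obtained by interchanging the two block indices $1$ and $2$. The only tool I expect to need is Lemma~\ref{lem:Metzler_stab}, in the form: a Metzler matrix $A$ is Hurwitz stable if and only if there exists $v\in\mathbb{R}^n_{>0}$ with $v^TA<0$, in which case $A$ is invertible with $A^{-1}\le0$. Before proving anything I would check that statement (b) is well-posed: under (b) the matrix $M_{11}$ is Hurwitz, so $M_{11}^{-1}\le0$, and since $M_{21}\ge0$ and $M_{12}\ge0$, a sign chase gives $-M_{21}M_{11}^{-1}M_{12}\ge0$; adding this nonnegative matrix to the Metzler matrix $M_{22}$ preserves nonnegativity of the off-diagonal entries, so the Schur complement $S:=M_{22}-M_{21}M_{11}^{-1}M_{12}$ is again Metzler and its Hurwitz stability is meaningful.

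For (a) $\Rightarrow$ (b), I would pick $v=\col(v_1,v_2)>0$ with $v^TM<0$, which splits into $v_1^TM_{11}+v_2^TM_{21}<0$ and $v_1^TM_{12}+v_2^TM_{22}<0$. The first inequality together with $v_2^TM_{21}\ge0$ forces $v_1^TM_{11}<0$, so $M_{11}$ is Hurwitz by Lemma~\ref{lem:Metzler_stab}. Introducing the strictly positive slack $r^T:=-(v_1^TM_{11}+v_2^TM_{21})>0$, I would solve $v_1^T=-(v_2^TM_{21}+r^T)M_{11}^{-1}$ and substitute into the second inequality to reach $v_2^TS-r^TM_{11}^{-1}M_{12}<0$. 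Since $r^T>0$, $M_{11}^{-1}\le0$ and $M_{12}\ge0$, the term $r^TM_{11}^{-1}M_{12}$ is $\le0$, so $v_2^TS<0$ with $v_2>0$, and $S$ is Hurwitz.

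For (b) $\Rightarrow$ (a), I would take $v_2>0$ with $v_2^TS<0$ and $w_1>0$ with $w_1^TM_{11}<0$, and set $v_1^T:=-v_2^TM_{21}M_{11}^{-1}+\eps w_1^T$. The sign pattern makes the first summand nonnegative, so $v_1>0$ for every $\eps>0$. A direct computation collapses the first block of $v^TM$ to $\eps w_1^TM_{11}<0$ and the second block to $v_2^TS+\eps w_1^TM_{12}$, which is negative for $\eps$ small enough because $v_2^TS<0$ strictly while $w_1^TM_{12}\ge0$. Hence $v=\col(v_1,v_2)>0$ satisfies $v^TM<0$, so $M$ is Hurwitz.

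The hard part will not be any single computation but rather the careful bookkeeping of strictness. In each direction the block structure forces part of the Lyapunov vector ($v_1$, respectively the dependent component of $v$) to be merely nonnegative, so a strictly positive correction — the slack $r$ or the perturbation $\eps w_1$ — must be injected to recover a strict inequality. Verifying that this correction leaves the companion block inequality intact is precisely the step that relies on $M_{11}^{-1}\le0$ together with $M_{12},M_{21}\ge0$, and it is there that the Metzler sign structure is essential.
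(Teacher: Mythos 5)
Your proof is correct and rests on the same two facts as the paper's own proof: the positive-vector characterization of Hurwitz stability for Metzler matrices (Lemma~\ref{lem:Metzler_stab}) and the nonpositivity of the inverse of a Hurwitz stable Metzler matrix. The execution differs in ways worth recording. The paper works with right (column) inequalities: it takes $\lambda=\col(\lambda_1,\lambda_2)>0$ with $M\lambda<0$, reads off the stability of both diagonal blocks, replaces the second block inequality by $M_{21}\lambda_1+M_{22}\lambda_2\le-\epsilon\lambda_2$, solves it as $\lambda_2\ge-(M_{22}+\epsilon I_{n_2})^{-1}M_{21}\lambda_1$, and then needs the resolvent comparison $-M_{22}^{-1}\le-(M_{22}+\epsilon I_{n_2})^{-1}$ to conclude; your slack vector $r^T:=-(v_1^TM_{11}+v_2^TM_{21})>0$ instead converts the first block inequality into an exact equality for $v_1^T$, so the substitution is direct and the $\epsilon$-shift and monotonicity step are avoided altogether. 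For the converse, the paper only says that it ``simply consists of reversing the arguments,'' whereas you give the explicit construction $v_1^T=-v_2^TM_{21}M_{11}^{-1}+\epsilon w_1^T$ with the small-$\epsilon$ bookkeeping spelled out; this genuinely completes a step the paper leaves to the reader, and your preliminary check that the Schur complement $M_{22}-M_{21}M_{11}^{-1}M_{12}$ is itself Metzler (so that Lemma~\ref{lem:Metzler_stab} applies to it) likewise makes explicit something the paper uses tacitly. A last cosmetic point: the paper's displayed computation, though announced as proving the equivalence of (a) and (b), in fact derives the stability of $M_{22}$ and of $M_{11}-M_{12}M_{22}^{-1}M_{21}$, i.e.\ statement (c); your argument is aligned with (b) and recovers (c) by the same block-swapping symmetry the paper invokes for the analogous case, so the two proofs together cover both labelings consistently.
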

\begin{proof}
  We only prove the equivalence between the two first statement. The proof of the equivalence between the first and third statements is analogous. To this aim, assume that $M$ is Hurwitz stable. Hence, there exist two positive vectors $\lambda_1,\lambda_2$ of appropriate dimensions such that we have
    \begin{equation}\label{kdlskdksdmskdmqkdm}
    \begin{bmatrix}
      M_{11} & M_{12}\\
      M_{21} & M_{22}
    \end{bmatrix}\begin{bmatrix}
      \lambda_1\\
      \lambda_2
    \end{bmatrix}<0.
  \end{equation}
  Since the matrices $M_{12}$ and $M_{21}$ are nonnegative, the above inequality implies that $M_{11}\lambda_1<0$ and $M_{22}\lambda_2<0$, i.e. the matrices $M_{11}$ and $M_{22}$ are both Hurwitz stable. Moreover, the inequality \eqref{kdlskdksdmskdmqkdm} is equivalent to the existence of a sufficiently small $\epsilon>0$ such that
  \begin{equation}
      M_{11}\lambda_1+M_{12}\lambda_2<0\ \textnormal{and}\ M_{21}\lambda_1+M_{22}\lambda_2\le-\epsilon\lambda_2.
  \end{equation}
  Solving for $\lambda_2$ in the second inequality yields
  \begin{equation}
    \lambda_2\ge-(M_{22}+\epsilon I_{n_2})^{-1} M_{21}\lambda_1\ge0
  \end{equation}
  where the second inequality comes from the fact that $M_{22}+\epsilon I_{n_2}$ is Metzler and Hurwitz, which implies that $(M_{22}+\epsilon I_{n_2})^{-1}\le0$. Using now the fact that $M_{12}\ge0$, we get that
    \begin{equation}
     \left(M_{11}-M_{12}(M_{22}+\epsilon I_{n_2})^{-1} M_{21}\right)\lambda_1<0.
  \end{equation}
and finally observing now that
\begin{equation}
  -M_{22}^{-1}\le-(M_{22}+\epsilon I_{n_2})^{-1}
\end{equation}
implies
    \begin{equation}
     \left(M_{11}-M_{12}M_{22}^{-1} M_{21}\right)\lambda_1<0.
  \end{equation}
  This proves the result. The proof of the converse simply consists of reversing the arguments.
\end{proof}}

The following result, which is essential in proving the main result of this section, provides a graph interpretation for the spectral radius of the product of two nonnegative matrices to be zero:
\begin{lemma}\label{lem:cyclebipartite}
  Let us consider two square nonnegative matrices $M_1\in\mathbb{R}_{\ge0}^{n\times n}$ and $M_2\in\mathbb{R}_{\ge0}^{n\times n}$. Then, the following statements are equivalent:
  \begin{enumerate}[(a)]
    \item\label{item:cyclebi:2} We have that $\rho(M_1M_2)=0$.
  \item\label{item:cyclebi:3} There is no cycle in the directed bipartite graph $B=(V_1,V_2,E)$ where ${V_i=\{v_1^i,\ldots,v_{n}^i\}}$, $i=1,2$, $E:=E_1\cup E_2$,
        \begin{equation}
          E_1:=\{(v_i^2,v_j^1)\in V_1\times V_2:\ [M_1]_{ji}\ne0\}\ \textnormal{and}\ E_2:=\{(v_i^1,v_j^2)\in V_1\times V_2:\ [M_2]_{ji}\ne0\}.
        \end{equation}
  \end{enumerate}
\end{lemma}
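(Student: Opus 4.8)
The plan is to reduce both conditions to an acyclicity statement and to connect them through the combinatorial meaning of a nonzero entry of a product of nonnegative matrices. First I would recall the classical characterization of nilpotency for nonnegative matrices: since $M_1M_2\in\mathbb{R}_{\ge0}^{n\times n}$, we have $\rho(M_1M_2)=0$ if and only if $M_1M_2$ is nilpotent, which in turn holds if and only if the associated directed graph $D_{M_1M_2}$ contains no directed cycle (see e.g. \cite{Berman:94}). This is the same nonnegative/Perron--Frobenius machinery already exploited in Lemma \ref{lem:maxcycle} and Lemma \ref{lem:nocycle}. Thus statement \eqref{item:cyclebi:2} is equivalent to the acyclicity of $D_{M_1M_2}$, and it remains to match the directed cycles of $D_{M_1M_2}$ with those of the bipartite graph $B$.

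The core step is the observation that, because both matrices are nonnegative, no cancellation can occur, so $[M_1M_2]_{ab}\ne0$ holds if and only if there is an index $c$ with $[M_1]_{ac}\ne0$ and $[M_2]_{cb}\ne0$. Translating through the definitions of $E_1$ and $E_2$, the condition $[M_1]_{ac}\ne0$ is exactly the edge $v_c^2\to v_a^1$ of $E_1$, while $[M_2]_{cb}\ne0$ is exactly the edge $v_b^1\to v_c^2$ of $E_2$. Hence every edge $a\to b$ of $D_{M_1M_2}$ lifts to a length-two path $v_b^1\to v_c^2\to v_a^1$ in $B$, and conversely each length-two path in $B$ joining two $V_1$-nodes projects back to an edge of $D_{M_1M_2}$. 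Concatenating these lifts, a closed walk $a_1\to a_2\to\cdots\to a_k\to a_1$ in $D_{M_1M_2}$ yields a closed walk in $B$ alternating between $V_1$ and $V_2$, and vice versa (a cycle of $B$, being bipartite, necessarily alternates sides and so reads off as a closed walk in $D_{M_1M_2}$ once the interpolating $V_2$-nodes are deleted). Since in a finite digraph the existence of a closed walk is equivalent to the existence of a simple directed cycle, $D_{M_1M_2}$ is acyclic if and only if $B$ is acyclic, which together with the first step establishes the equivalence of \eqref{item:cyclebi:2} and \eqref{item:cyclebi:3}.

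The step I expect to require the most care is precisely this lift: keeping straight the role of the summation index $c$ and the opposite orientations of $E_1$ and $E_2$, and noting that ``cycle'' may be read either as a closed walk or as a simple cycle, these being interchangeable for the acyclicity question. As a rigorous cross-check, I would also present the purely algebraic route. Form the weighted nonnegative matrix
\[
\mathcal{A}:=\begin{bmatrix} 0 & M_2^T \\ M_1^T & 0\end{bmatrix},
\]
whose support digraph is exactly $B$, so that $B$ is acyclic if and only if $\rho(\mathcal{A})=0$. A direct block computation gives $\mathcal{A}^2=\diag\{(M_1M_2)^T,(M_2M_1)^T\}$, and using $\rho(\mathcal{A})^2=\rho(\mathcal{A}^2)$ together with the general facts $\rho(P^T)=\rho(P)$ and $\rho(M_1M_2)=\rho(M_2M_1)$, one obtains $\rho(\mathcal{A})^2=\rho(M_1M_2)$. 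Consequently $\rho(M_1M_2)=0$ if and only if $\rho(\mathcal{A})=0$ if and only if $B$ is acyclic, recovering the equivalence and confirming the graph-theoretic argument.
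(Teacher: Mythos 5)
Your proposal is correct, and its primary argument takes a genuinely different route from the paper's. The paper never touches $D_{M_1M_2}$ directly: it forms the $2n\times 2n$ anti-diagonal block matrix $\left[\begin{smallmatrix}0 & S_1\\ S_2 & 0\end{smallmatrix}\right]$ (with $\sgn(S_i)=\sgn(M_i)$), identifies its directed graph with $B$, invokes Theorem \ref{th:struct2} to equate acyclicity of $B$ with zero spectral radius throughout the qualitative class, and then passes between the spectral radius of the block matrix and $\rho(M_1M_2)$. Your main argument instead works on $D_{M_1M_2}$ itself: the classical nilpotency--acyclicity characterization for nonnegative matrices plus the no-cancellation property of nonnegative products, matching cycles of $D_{M_1M_2}$ with cycles of $B$ via the length-two lifts. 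This is more elementary and self-contained, bypassing the sign-matrix machinery entirely, at the cost of the orientation bookkeeping you flag --- which you handle correctly: each edge lifts to a reversed two-step path, and reversal is immaterial for acyclicity. Your algebraic cross-check is, modulo transposition, essentially the paper's proof, and is in fact tighter on one point: where the paper simply asserts $\rho(M)=0\Leftrightarrow\rho(M_1M_2)=0$ for $M=\left[\begin{smallmatrix}0 & M_1\\ M_2 & 0\end{smallmatrix}\right]$, you make it explicit through $\mathcal{A}^2=\diag\{(M_1M_2)^T,(M_2M_1)^T\}$ and $\rho(\mathcal{A})^2=\rho(M_1M_2)$. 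One caution worth making explicit: the equivalence ``nilpotent iff $D_{M_1M_2}$ acyclic'' holds only if self-loops count as cycles --- a positive diagonal entry of $M_1M_2$ corresponds to a $2$-cycle of $B$ --- i.e., with the standard digraph convention of \cite{Berman:94} rather than with the paper's Definition \ref{def:graphD}, which excludes loops; your closed-walk phrasing (allowing closed walks of length one) covers this case, but a careful reader will want that convention stated.
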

\bluee{\begin{proof}
Assume that \eqref{item:cyclebi:3} holds and let us define the matrix
\begin{equation}
  S:=\begin{bmatrix}
      0 & S_1\\
      S_2 & 0
  \end{bmatrix}
\end{equation}
where $S_1,S_2\in\mathbb{S}_{\ge0}^{n\times n}$ and $\sgn(S_i)=\sgn(M_i)$, $i=1,2$. Note, moreover,  that the directed graph $D_S=(V,E_S)$ (see Definition \ref{def:graphD}) coincides with the graph $B=(V_1,V_2,E)$ with $V=V_1\cup V_2$ and $E_S=E$. Therefore, the graph $B$ is acyclic if and only if $D_S$ is. From Theorem \ref{th:struct2}, this is equivalent to saying that all the matrices in $\mathcal{Q}(S)$ have zero spectral radius and,  equivalently, that they can all be expressed in upper-triangular form with zero diagonal elements modulo some permutation. Noting now that for any $X\in\mathcal{Q}(S)$ with $X_i\in\mathcal{Q}(S_i)$, we have that $\rho(X)=0$ if and only if $\rho(X_1X_2)=0$ implies that $\rho(M_1M_2)=0$ since $M_i\in\mathcal{Q}(S_i)$, $i=1,2$. This proves the implication \eqref{item:cyclebi:3} $\Rightarrow$ \eqref{item:cyclebi:2}. Assume now that \eqref{item:cyclebi:2} holds and let
\begin{equation*}
  M=\begin{bmatrix}
  0 & M_1\\
  M_2 & 0
\end{bmatrix}
\end{equation*}
from which we have that $\rho\left(M\right)=0\ \Leftrightarrow \rho(M_1M_2)=0$. This then implies that $M$ can be put in upper-triangular form modulo some permutation, which implies that the graph $D_M$ is acyclic. Noting, finally, that the graph $D_M=(V,E_M)$ coincides with $B$ yields the result.
\end{proof}}

\subsection{Main result}

Let us consider in this section mixed matrices of the form
\begin{equation}\label{eq:partstab}
  A_{\sigma\varphi}=\begin{bmatrix}
    A_{\sigma} & C_\varphi\\
    B_\varphi & A_\varphi
  \end{bmatrix}
\end{equation}
where $A_{\sigma}\in\mathbb{MS}^{n_\sigma}$, $A_\varphi \in\mathbb{MR}^{n_\varphi }$, $B_\varphi \in\mathbb{R}^{n_\varphi \times n_\sigma}_{\ge0}$ and $C_\varphi \in\mathbb{R}^{n_\sigma \times n_\varphi }_{\ge0}$.  \bluee{The following result that can be seen as the ``mixed matrix analogue" of Theorem \ref{th:struct} as it contains both algebraic and graph theoretical conditions characterizing the sign-stability of mixed matrices. While graph theoretical conditions are easier to check, algebraic ones can be combined with linear optimization problems in the same spirit as in Theorem \ref{th:struct}:}
\begin{theorem}\label{th:mixedSS}
The following statements are equivalent:
  \begin{enumerate}[(a)]
    \item\label{item:mixed:1} The mixed matrix $A_{\sigma\varphi}$ defined in \eqref{eq:partstab} is sign-stable.
    \item\label{item:mixed:2} The following statements hold:
      \begin{enumerate}[(\ref{item:mixed:2}1)]
        \item the matrix $A_{\varphi}$ is Hurwitz stable, and
        \item the matrix $M-M_\varphi$ where $M_\varphi:=C_\varphi A_\varphi^{-1}B_\varphi$ is Hurwitz stable for all $M\in\mathcal{Q}(A_\sigma)$ or, equivalently, we have that $\rho(M^{-1}M_\varphi)<1$ for all $M\in\mathcal{Q}(A_\sigma)$.
        \end{enumerate}
        \item\label{item:mixed:2b} The following statements hold:
      \begin{enumerate}[(\ref{item:mixed:2b}1)]
        \item $A_{\sigma}$ is sign-stable,
        \item $A_\varphi $ is Hurwitz stable,
        \item we have that $\rho(M_\sigma M_\varphi)=0$ where $M_\sigma:=\sgn(A_\sigma^{-1})$.
        \end{enumerate}
        \item\label{item:mixed:4} The following statements hold:
    \begin{enumerate}[(\ref{item:mixed:4}1)]
        \item $A_{\sigma}$ is sign-stable,
        \item $A_\varphi $ is Hurwitz stable,
        \item There is no cycle in the directed bipartite graph $B=(V_\sigma,V_\varphi,E)$ where $E=E_\sigma\cup E_\varphi$ with
        \begin{equation}
          E_\sigma:=\{v_i^\varphi,v_j^\sigma)\in V_\varphi\times V_\sigma:\ [M_\sigma]_{ji}\ne0\}\ \textnormal{and}\ E_\varphi:=\{(v_i^\sigma,v_j^\varphi)\in V_\sigma\times V_\varphi:\ [M_\varphi]_{ji}\ne0\}.
        \end{equation}
    \end{enumerate}
\item\label{item:mixed:3} The following statements hold:
    \begin{enumerate}[(\ref{item:mixed:3}1)]
        \item $A_{\sigma}$ is sign-stable,
        \item $A_\varphi $ is Hurwitz stable,
        \item There is no cycle in the directed graph $D_{A_{\sigma\varphi}}$ containing nodes in both $V_\sigma=\{v_1^\sigma,\ldots,v_{n_\sigma}^\sigma\}$ and $V_\varphi=\{v_1^\varphi,\ldots,v_{n_\varphi}^\varphi\}$ where $V_\sigma$ contains the first $n_\sigma$ nodes associated with $D_{A_{\sigma\varphi}}$ and $V_\varphi$  the $n_\varphi$ last ones.
    \end{enumerate}
    \item\label{item:mixed:5} The following statements hold:
      \begin{enumerate}[(\ref{item:mixed:5}1)]
        \item $\sgn(A_{\sigma})$ is Hurwitz stable,
        \item $A_\varphi $ is Hurwitz stable,
        \item The matrix $\sgn(M_\sigma M_\varphi)-I$ is Hurwitz stable.
  \end{enumerate}
  \end{enumerate}
\end{theorem}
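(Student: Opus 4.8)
The plan is to take the equivalences in the order $(a)\Leftrightarrow(b)\Leftrightarrow(c)$, and then to peel off $(d)$, $(e)$ and $(f)$ as graph-theoretic and algebraic reformulations of $(c)$. The only genuinely analytic step is $(b)\Leftrightarrow(c)$; everything else reduces to results already in the paper (the Schur-type Lemma~\ref{lem:SchurComplement}, the bipartite-cycle Lemma~\ref{lem:cyclebipartite}, Theorem~\ref{th:struct2} and the inverse-of-a-sign-matrix Theorem~\ref{th:inverse}).

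For $(a)\Leftrightarrow(b)$ I would first note that, since $A_\varphi$, $B_\varphi$ and $C_\varphi$ are fixed real/nonnegative blocks, the qualitative class $\mathcal{Q}(A_{\sigma\varphi})$ consists exactly of the matrices obtained by replacing $A_\sigma$ by an arbitrary $M\in\mathcal{Q}(A_\sigma)$. Applying Lemma~\ref{lem:SchurComplement} with $M_{11}=M$, $M_{22}=A_\varphi$, $M_{12}=C_\varphi$, $M_{21}=B_\varphi$ shows that such a block matrix is Hurwitz stable iff $A_\varphi$ is Hurwitz stable and $M-C_\varphi A_\varphi^{-1}B_\varphi=M-M_\varphi$ is Hurwitz stable. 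Quantifying over $M\in\mathcal{Q}(A_\sigma)$ yields (b) directly. The reformulation $\rho(M^{-1}M_\varphi)<1$ follows from the standard positive-systems fact that, for a Metzler Hurwitz matrix $M$ and a nonnegative perturbation $E$, the matrix $M+E$ is Hurwitz iff $\rho\bigl((-M^{-1})E\bigr)<1$; here $M$ is Metzler and Hurwitz (as verified in the next step), $-M^{-1}\ge0$, and $E=-M_\varphi\ge0$ because $C_\varphi\ge0$, $A_\varphi^{-1}\le0$ and $B_\varphi\ge0$, so that $(-M^{-1})E=M^{-1}M_\varphi$.

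The core is $(b)\Leftrightarrow(c)$, and here I would argue both directions by exploiting that $M^{-1}$ has, by Theorem~\ref{th:inverse}, exactly the sign pattern $M_\sigma=\sgn(A_\sigma^{-1})$ whenever $A_\sigma$ is sign-stable. For $(c)\Rightarrow(b)$: given (c1) every $M\in\mathcal{Q}(A_\sigma)$ is Metzler Hurwitz with $M^{-1}\le0$ supported exactly on the support of $M_\sigma$; since $-M_\varphi\ge0$ is common, the nonnegative products $M^{-1}M_\varphi=(-M^{-1})(-M_\varphi)$ and $M_\sigma M_\varphi=(-M_\sigma)(-M_\varphi)$ have the \emph{same} zero/nonzero pattern, hence the same (a)cyclicity, so (c3) forces $\rho(M^{-1}M_\varphi)=0<1$ and $M-M_\varphi$ is Hurwitz. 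For $(b)\Rightarrow(c)$ I would first recover sign-stability of $A_\sigma$: from $-M_\varphi\ge0$ we have $M\le M-M_\varphi$ entrywise with both Metzler, so monotonicity of the Perron--Frobenius abscissa gives $\lambda_{PF}(M)\le\lambda_{PF}(M-M_\varphi)<0$, i.e. every $M\in\mathcal{Q}(A_\sigma)$ is Hurwitz, which is (c1). Then $\rho(M_\sigma M_\varphi)=0$ is obtained by a scaling argument: for fixed $M_0\in\mathcal{Q}(A_\sigma)$ and $t>0$ the matrix $tM_0$ still lies in $\mathcal{Q}(A_\sigma)$ and $(tM_0)^{-1}M_\varphi=t^{-1}M_0^{-1}M_\varphi$, so $\rho\bigl((tM_0)^{-1}M_\varphi\bigr)=t^{-1}\rho(M_0^{-1}M_\varphi)$; if this spectral radius were positive it would exceed $1$ for small $t$, contradicting (b), hence $\rho(M_0^{-1}M_\varphi)=0$ and, by the support identity, $\rho(M_\sigma M_\varphi)=0$.

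Finally I would dispatch the reformulations. $(c)\Leftrightarrow(d)$ is an immediate application of Lemma~\ref{lem:cyclebipartite} to the nonnegative matrices $-M_\sigma$ and $-M_\varphi$, whose nonzero patterns define the edge sets $E_\sigma,E_\varphi$; $(c)\Leftrightarrow(f)$ follows from Theorem~\ref{th:struct2} applied to the nonnegative sign-matrix $\sgn(M_\sigma M_\varphi)$, since $\rho(M_\sigma M_\varphi)=0$ is equivalent to acyclicity of its graph, which is equivalent to $\sgn(M_\sigma M_\varphi)-I$ being Hurwitz (and (f1) is just (c1) through Theorem~\ref{th:struct}). The step I expect to require the most care is $(d)\Leftrightarrow(e)$: using Theorem~\ref{th:inverse}, a nonzero entry of $M_\sigma$ encodes a directed path inside the $\sigma$-block of $D_{A_{\sigma\varphi}}$, while a nonzero entry of $M_\varphi=C_\varphi A_\varphi^{-1}B_\varphi$ encodes an excursion $\sigma\!\to\!\varphi\!\to\!\cdots\!\to\!\varphi\!\to\!\sigma$ through the real block; thus a cycle in the bipartite graph $B$ unfolds precisely into a cycle of $D_{A_{\sigma\varphi}}$ that visits both $V_\sigma$ and $V_\varphi$, and conversely every such mixed cycle contracts to one in $B$. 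Making this path/edge correspondence bijective --- and checking that purely-$\varphi$ cycles (allowed, since $A_\varphi$ is merely Hurwitz and its graph need not be acyclic) and purely-$\sigma$ cycles (excluded by (c1)) play no role --- is the main bookkeeping obstacle of the proof.
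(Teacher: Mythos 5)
Your proposal is correct and follows essentially the same route as the paper: the same chain $(a)\Leftrightarrow(b)\Leftrightarrow(c)$ via Lemma~\ref{lem:SchurComplement} and the exact sign pattern of $M^{-1}$ from Theorem~\ref{th:inverse}, then $(d)$, $(e)$, $(f)$ dispatched via Lemma~\ref{lem:cyclebipartite}, the path-unfolding correspondence between cycles of the bipartite graph and mixed cycles of $D_{A_{\sigma\varphi}}$, and the triangularity/acyclicity characterization of Theorem~\ref{th:struct2}. Your two local deviations --- invoking the regular-splitting fact that for Metzler Hurwitz $M$ and $E\ge0$ one has $M+E$ Hurwitz iff $\rho\bigl((-M^{-1})E\bigr)<1$, where the paper instead runs a determinant-nonvanishing/continuity argument, and deriving (c1) inside $(b)\Rightarrow(c)$ from the entrywise monotonicity $M\le M-M_\varphi$ together with the explicit $t\to0^{+}$ scaling of $tM_0\in\mathcal{Q}(A_\sigma)$, where the paper borrows sign-stability of $A_\sigma$ from statement $(a)$ and argues that the diagonal of $M^{-1}$ can be made arbitrarily large --- are both sound and, if anything, make those two steps tighter and more self-contained than the paper's own versions.
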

\blue{\begin{proof}
\textbf{Proof of \eqref{item:mixed:1} $\Leftrightarrow$ \eqref{item:mixed:2}.} The matrix $A_{\sigma\varphi}$ is sign-stable if and only if for all $M\in\mathcal{Q}(A_\sigma)$, we have that the matrix
\begin{equation}
 \begin{bmatrix}
    M & C_\varphi\\
    B_\varphi & A_\varphi
  \end{bmatrix}
\end{equation}
is Hurwitz stable. Since the above matrix is Metzler, then we know from Lemma \ref{lem:SchurComplement} that it is Hurwitz stable if and only if $M$ and $M-C_\varphi A_\varphi^{-1}B_\varphi$ are both Hurwitz stable. Note also that from the fact that $A_\sigma$ is necessarily sign-stable, then there exists an $M^*\in\mathcal{Q}(A_\sigma)$ such that $M^*-M_\varphi$ is Hurwitz stable. Hence, this means that the matrix $M-C_\varphi A_\varphi^{-1}B_\varphi$ is Hurwitz stable for all $M\in\mathcal{Q}(A_\sigma)$ if and only if $\det(M-M_\varphi)\ne0$ for all $M\in\mathcal{Q}(A_\sigma)$. From the determinant formula, we get that this condition is equivalent to saying that $\det(I-M^{-1}M_\varphi)\ne0$ for all $M\in\mathcal{Q}(A_\sigma)$ and hence $\rho(M^{-1}M_\varphi)<1$ for all $M\in\mathcal{Q}(A_\sigma)$.\\

\noindent\textbf{Proof of \eqref{item:mixed:2} $\Leftrightarrow$ \eqref{item:mixed:2b}.} We prove first that \eqref{item:mixed:2} implies \eqref{item:mixed:2b}. To this aim, assume that $\rho(M^{-1}M_\varphi)<1$ for all $M\in\mathcal{Q}(A_\sigma)$ and that $A_\sigma$ is in upper-triangular form with negative diagonal elements (this last statement comes from the fact that $A_\sigma$ is sign-stable and Theorem \ref{th:struct}). Hence, from Theorem \ref{th:inverse}, its inverse is well-defined and we have that $A_\sigma^{-1}$ is nonpositive with negative diagonal elements. Since the diagonal elements of $A_\sigma^{-1}$ (which are the inverse of the diagonal elements of $A_\sigma$) can be arbitrarily large, then the spectral radius condition of statement \eqref{item:mixed:2} can only be satisfied if $M_\varphi$ is upper-triangular with zero diagonal elements, which implies that $\rho(M^{-1}M_\varphi)=0$ for all $M\in\mathcal{Q}(A_\sigma)$ and, hence, that $\rho(M_\sigma M_\varphi)=0$.

To prove the converse, let us assume that $\rho(M_\sigma M_\varphi)=0$. Using the fact that the matrices $M_\sigma$ and $ M_\varphi$ are both nonpositive and that $M_\sigma$ has negative diagonal elements, then this implies that  $\rho(M^{-1}M_\varphi)=0$ for all $M\in\mathcal{Q}(A_\sigma)$ and, hence, that $\rho(M^{-1}M_\varphi)<1$ for all $M\in\mathcal{Q}(A_\sigma)$.\\

\noindent\textbf{Proof of \eqref{item:mixed:2b} $\Leftrightarrow$ \eqref{item:mixed:4}.}  The proof of this statement directly follows from Lemma \ref{lem:cyclebipartite}.\\

\noindent\textbf{Proof of \eqref{item:mixed:4} $\Leftrightarrow$ \eqref{item:mixed:3}.} The equivalence follows from the fact that the cycles in $D_{A_{\sigma\varphi}}$ that contain nodes in both $V_\sigma$ and $V_\varphi$ are exactly those in the graph $B$.\\

\noindent\textbf{Proof of \eqref{item:mixed:2b} $\Leftrightarrow$ \eqref{item:mixed:5}.} Clearly the statements (\ref{item:mixed:2b}1) and (\ref{item:mixed:2b}2) are equivalent to (\ref{item:mixed:5}1) and (\ref{item:mixed:5}2), respectively. The condition that $\rho(M_\sigma M_\varphi)=0$ has been shown to be equivalent to saying that $M_\sigma M_\varphi$ is upper-triangular modulo some cyclic permutation and that its diagonal elements are all equal to zero. Assuming then that $\rho(M_\sigma M_\varphi)=0$, this implies that the matrix $\sgn(M_\sigma M_\varphi)$ has diagonal elements equal to zero and is upper-triangular, which implies, in turn, that the Metzler matrix $\sgn(M_\sigma M_\varphi)-I$ is Hurwitz stable. To prove the converse, let us assume that the Metzler matrix $\sgn(M_\sigma M_\varphi)-I$ is Hurwitz stable. This, then implies, from Theorem \ref{th:struct} that it must be upper-triangular with negative elements on the diagonal, which implies then that $\sgn(M_\sigma M_\varphi)$ is triangular with diagonal elements equal to zero. This proves the result.
\end{proof}}

\blue{\begin{remark}[Linear program condition]
  Interestingly, the conditions of statement (4) of Theorem \ref{th:mixedSS} can be exactly reformulated as the following linear feasibility problem: Find $v,z\in\mathbb{R}_{>0}^{n_\sigma}$ and $w\in\mathbb{R}_{>0}^{n_\varphi}$ such that $v^T\sgn(A_\sigma)<0$,  $w^TA_\varphi<0$  and  $z^T(\sgn(M_\varphi M_\sigma)-I)<0$.
\end{remark}}

\subsection{Example}

Let us consider the matrix
\begin{equation}\label{eq:partiallyatsbale}
  A_{\sigma\varphi}=\begin{bmatrix}
    \ominus & \oplus & \vline & 1 & 0\\
    0 & \ominus & \vline & 0 & 0\\
    \hline
    \multicolumn{2}{c}{\multirow{2}{*}{$B_\varphi$}} & \vline & -1 & 2\\
     && \vline & 1 & -5
  \end{bmatrix}.
\end{equation}
Clearly, $A_\sigma$ is sign-stable and $A_\varphi$ is Hurwitz stable. Assume now that $B_\varphi$ is given by
\begin{equation}
  B_\varphi=\begin{bmatrix}
    0 & 0\\
    0 & 1
  \end{bmatrix}.
\end{equation}
In this case, the matrices $M_\sigma$ and $M_\varphi$ are given by
\begin{equation}
  M_\sigma=-\begin{bmatrix}
    1 & 1\\
    0 & 1
  \end{bmatrix}\ \textnormal{and}\ M_\varphi=-\dfrac{2}{3}\begin{bmatrix}0 & 1\\0 & 0
  \end{bmatrix}
\end{equation}
and we have that
\begin{equation}
  M_\sigma M_\varphi=\dfrac{2}{3}\begin{bmatrix}
    0 & 1\\
    0 & 0
  \end{bmatrix}
\end{equation}
which has clearly zero spectral radius. Therefore, the matrix $A_{\sigma\varphi}$ in \eqref{eq:partiallyatsbale} is sign-stable. The same conclusion can be drawn by checking the no-cycle condition in the graphs.\\

\noindent Now, if we let
\begin{equation*}
  B_\varphi=\begin{bmatrix}
    0 & 0\\
    1  & 0
  \end{bmatrix}
\end{equation*}
then we have that
\begin{equation*}
M_\varphi=-\dfrac{1}{3}\begin{bmatrix}0 & 0\\1 & 0
  \end{bmatrix}\ \textnormal{and}\  M_\sigma M_\varphi =\dfrac{1}{3}\begin{bmatrix}
    0 & 1\\
    0 & 1
  \end{bmatrix}.
\end{equation*}
Clearly, we have that $\rho( M_\sigma M_\varphi)=1/3$ and, hence, the matrix $A_{\sigma\varphi}$ in \eqref{eq:partiallyatsbale} is not sign-stable.

\section{Applications}\label{sec:app}

The goal of this section is to apply some of the obtained results to some problems arising in the structural analysis of linear positive time-delay systems (Section \ref{sec:delay} and Section \ref{sec:delay_DT}), linear positive switched systems (Section \ref{sec:switched}) and linear positive impulsive systems (Section \ref{sec:impulsive}). The problem of establishing the structural attractiveness and forward invariance of a compact set for a class of nonlinear positive dynamical systems is considered in Section \ref{sec:nlps}. Finally, the problem of establishing the structural ergodicity of a class of continuous-time Markov jump process arising in the analysis of biochemical reaction networks is addressed in Section \ref{sec:ergo}.

\subsection{Linear positive dynamical systems}

\subsubsection{Continuous-time systems with discrete delays}\label{sec:delay}

Linear positive systems with discrete-delays arise in problems such as in power-control in networks \cite{Forschini:02,Zappavigna:12} and have been theoretically studied in several papers; see e.g. \cite{Haddad:04,Briat:11h}. Let us consider the following linear system with constant discrete-delays \cite{Niculescu:01,GuKC:03,Briat:book1}:
\begin{equation}\label{eq:ltipd}
  \begin{array}{lcl}
    \dot{x}(t)&=&A_0x(t)+\sum_{i=1}^NA_ix(t-h_i)\\
       x(s)&=&\phi(s),\ s\in[-\bar{h},0]
  \end{array}
\end{equation}
where $A_0,A_i\in\mathbb{R}^{n\times n}$, $h_i>0$, $i=1,\ldots,N$, $x\in\mathbb{R}^n$, $\phi\in C([-\bar{h},0],\mathbb{R}^n)$ and $\textstyle\bar{h}:=\max_i\{h_i\}$. \bluee{It is known \cite{Haddad:04} that the above system is positive for all delays $h_i\ge0$ if and only if (a) the matrix $A_0$ is Metzler and (b) the matrices $A_i$, $i=1,\ldots,N$, are all nonnegative; and that it is asymptotically stable for any delays $h_i\ge0$, $i=1,\ldots,N$, if and only if the matrix $\textstyle\sum_{i=0}^NA_i$ is Hurwitz stable.}
\begin{remark}[Extension to time-varying delays]
By virtue of the results in \cite{AitRami:09,Briat:11h,Shen:15}, the above result remains valid when time-varying delays are considered instead of constant ones.
\end{remark}
Before stating the main result, it seems important to recall that the sign-summability of Metzler and nonnegative matrices has been fully characterized in Proposition \ref{prop:summable} (note that nonnegative matrices form a subset of Metzler matrices). With this in mind, we can state the following result:
\begin{proposition}\label{prop:CTD}
  Let $M_0\in\mathbb{MS}^{n}$ and $M_i\in\mathbb{S}_{\ge0}^{n}$, $i=1,\ldots,N$. Then, the following statements are equivalent:
  \begin{enumerate}[(a)]
  \item For all $A_0\in\mathcal{Q}(M_0)$ and all $A_i\in\mathcal{Q}(M_i)$, $i=1,\ldots,N$, the linear positive system with delays \eqref{eq:ltipd} is asymptotically stable; i.e. the linear positive system with discrete-delays \eqref{eq:ltipd} is structurally stable.
  \item For all $A_0\in\mathcal{Q}(M_0)$ and all $A_i\in\mathcal{Q}(M_i)$, $i=1,\ldots,N$, there exists a vector $v\in\mathbb{R}^n_{>0}$ such that $v^T(\textstyle\sum_{i=0}^NA_i)<0$.
  \item The matrices $M_0,\ldots,M_N$ are sign-summable and the matrix $\textstyle\sum_{i=0}^NM_i$ is sign-stable.
   \item  The matrices $M_0,\ldots,M_N$ are sign-summable and there exists a permutation matrix $P$ such that the matrix $P^T(\textstyle\sum_{i=0}^NM_i)P$ is upper-triangular.
   \item There exists a vector $v\in\mathbb{R}_{>0}^n$ such that $\textstyle v^T\sgn\left(\textstyle\sum_{i=0}^NM_i\right)<0$ holds.
\end{enumerate}
\end{proposition}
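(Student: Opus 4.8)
The plan is to ride on the recalled delay-stability criterion, which reduces statement (a) to a Hurwitz condition on the single Metzler matrix $\sum_{i=0}^N A_i$, and then to transfer everything from the real level to the sign level. First I would observe that for every $A_0\in\mathcal{Q}(M_0)$ and $A_i\in\mathcal{Q}(M_i)$ the matrix $A_0$ is Metzler and the $A_i$ are nonnegative, so the system \eqref{eq:ltipd} is positive and, by the recalled result, asymptotically stable for all delays if and only if $\sum_{i=0}^N A_i$ is Hurwitz stable. Since this sum is itself Metzler, Lemma \ref{lem:Metzler_stab} turns its Hurwitz stability into the existence of $v\in\mathbb{R}^n_{>0}$ with $v^T\left(\sum_{i=0}^N A_i\right)<0$. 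Quantifying over all admissible $A_0,A_i$ then gives the equivalence of (a) and (b) immediately.

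Next I would show that (a) forces sign-summability. If $M_0,\ldots,M_N$ are not sign-summable, then by Proposition \ref{prop:summable} there is a diagonal index $j$ at which the nonzero entries among $\{[M_i]_{jj}\}$ differ in sign; because $M_1,\ldots,M_N$ are nonnegative, this can only mean $[M_0]_{jj}=\ominus$ while $[M_{i_0}]_{jj}=\oplus$ for some $i_0\ge1$. Choosing the corresponding real magnitudes so that the positive contribution dominates makes $[\sum_{i=0}^N A_i]_{jj}>0$, whence Lemma \ref{lem:diag} shows the sum is not Hurwitz and (a) fails. The core step is then the set identity, valid under sign-summability, $\{\sum_{i=0}^N A_i:\ A_i\in\mathcal{Q}(M_i)\}=\mathcal{Q}\left(\sum_{i=0}^N M_i\right)$: off-diagonal entries are harmless since all signs involved are nonnegative, while on the diagonal sign-summability guarantees that the nonzero contributions at each position share a common sign, so each entry of $\sum_{i=0}^N A_i$ is a sum of like-signed reals and inherits the sign dictated by $\sum_{i=0}^N M_i$ (inclusion $\subseteq$); conversely any prescribed sign-pattern in $\mathcal{Q}(\sum_{i=0}^N M_i)$ is realized by splitting each entry across the summands (inclusion $\supseteq$). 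With this identity in hand, ``(b) holds for all admissible $A_i$'' is exactly ``every matrix in $\mathcal{Q}(\sum_{i=0}^N M_i)$ is Hurwitz,'' i.e. $\sum_{i=0}^N M_i$ is sign-stable, which together with the already-established sign-summability is precisely (c).

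Finally, the equivalences of (c), (d) and (e) are obtained by applying Theorem \ref{th:struct} to the Metzler sign-matrix $\sum_{i=0}^N M_i$: statement (e) here is its $\sgn$/linear-programming characterization, while (d) is its permutation to upper-triangular form, whose diagonal is negative precisely because of sign-stability. This closes the chain of equivalences.

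The main obstacle I expect is the set identity in the core step, and specifically the inclusion $\supseteq$, where a given real off-diagonal or diagonal entry of a target matrix in $\mathcal{Q}(\sum_{i=0}^N M_i)$ must be distributed as a like-signed sum over the summands; the clean handling of this, and the use of sign-summability to exclude diagonal cancellations, is exactly where the nonnegativity of $M_1,\ldots,M_N$ and the Metzler structure of $M_0$ are indispensable.
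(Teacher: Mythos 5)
Your proof is correct and takes essentially the same route as the paper, whose proof simply invokes the delay-stability criterion for the equivalence of (a) and (b) and Theorem \ref{th:struct} for the equivalence of (b), (c), (d) and (e). The details you make explicit --- the necessity of sign-summability via Lemma \ref{lem:diag} and the set identity $\left\{\sum_{i=0}^N A_i:\ A_i\in\mathcal{Q}(M_i)\right\}=\mathcal{Q}\left(\sum_{i=0}^N M_i\right)$ under sign-summability --- are precisely what the paper leaves implicit, and your handling of them (in particular the splitting argument for the reverse inclusion and the exclusion of diagonal cancellations) is sound.
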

\begin{proof}
The equivalence between the statements (b), (c), (d) and (e) follows from Theorem \ref{th:struct} while the proof that (a) and (b) are equivalent follows from the stability result for linear positive systems with delays.
\end{proof}

\blue{\subsubsection{Discrete-time systems with discrete delays}\label{sec:delay_DT}

Let us now consider the case of linear discrete positive systems \cite{Buslowicz:08}. To this aim, we consider the following system \cite{Verriest:95b}
\begin{equation}\label{eq:ltipd_DT}
  \begin{array}{rcl}
    x(k+1)&=&A_0x(k)+\sum_{i=1}^NA_ix(k-h_i)\\
       x(s)&=&\phi_s,\ s\in\{-\bar{h},\ldots,0\}
  \end{array}
\end{equation}
where $A_0,A_i\in\mathbb{R}^{n\times n}$, $h_i\in\mathbb{Z}_{\ge0}$, $i=1,\ldots,N$, $x\in\mathbb{R}^n$, $\phi_s\in\mathbb{R}^n$, $s=-\bar{h},\ldots,0$ and $\textstyle \bar{h}:=\max_i\{h_i\}$. \bluee{It is known \cite{Buslowicz:08} that the above system is positive for all for all delays $h_i\in\mathbb{Z}_{\ge0}$ if and only if the matrices $A_i$, $i=0,\ldots,N$, are all nonnegative; and that it is asymptotically stable for any delays $h_i\ge0$, $i=1,\ldots,N$ if and only if the matrix $\textstyle\sum_{i=0}^NA_i$ is Schur stable.} With these results in mind, we can state the following result:
\begin{proposition}
  Let $M_i\in\mathbb{S}_{\ge0}^{n}$, $i=0,\ldots,N$. Then, the following statements are equivalent:
  \begin{enumerate}[(a)]
  \item For all $A_i\in\mathcal{Q}(M_i)$, $i=0,\ldots,N$, the linear positive system with delays \eqref{eq:ltipd_DT} is asymptotically stable; i.e. the linear positive system with discrete-delays \eqref{eq:ltipd_DT} is structurally stable.
  \item For all $A_i\in\mathcal{Q}(M_i)$, $i=0,\ldots,N$, there exists a vector $v\in\mathbb{R}^n_{>0}$ such that $v^T(\textstyle\sum_{i=0}^NA_i-I)<0$.
  \item The matrix $\textstyle\sum_{i=0}^NM_i$ is sign-stable.
   \item  There exists a permutation matrix $P$ such that the matrix $P^T(\textstyle\sum_{i=0}^NM_i)P$ is upper-triangular.
   \item There exists a vector $v\in\mathbb{R}_{>0}^n$ such that $\textstyle v^T\left[\sgn\left(\textstyle\sum_{i=0}^NM_i\right)-I\right]<0$ holds.
\end{enumerate}
\end{proposition}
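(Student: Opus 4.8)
The plan is to imitate the proof of Proposition~\ref{prop:CTD}, collapsing every condition onto Theorem~\ref{th:struct2} applied to the single nonnegative sign-matrix $S:=\sum_{i=0}^N M_i$. The first remark is that since each $M_i\in\mathbb{S}_{\ge0}^n$, the family is trivially sign-summable, so $S$ is a well-defined nonnegative sign-matrix and ``sign-stable'' in statement (c) is understood in the Schur sense of Theorem~\ref{th:struct2}. All the work then reduces to two ingredients: the Schur stability criterion recalled for the positive system \eqref{eq:ltipd_DT}, and the observation that the real sums $\sum_i A_i$ sweep out exactly the qualitative class $\mathcal{Q}(S)$.

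First I would prove the identification $\{\sum_{i=0}^N A_i : A_i\in\mathcal{Q}(M_i)\}=\mathcal{Q}(S)$. For ``$\subseteq$'', note that $[\sum_i A_i]_{jk}=\sum_i[A_i]_{jk}$ is a sum of nonnegative reals, hence strictly positive exactly when some $[M_i]_{jk}=\oplus$ and zero otherwise; its sign pattern is therefore $S$, so $\sum_i A_i\in\mathcal{Q}(S)$. For ``$\supseteq$'', any $B\in\mathcal{Q}(S)$ is realized by assigning, for each entry with $[S]_{jk}=\oplus$, the value $B_{jk}>0$ to one index $i$ with $[M_i]_{jk}=\oplus$ (at least one exists) and zero to the remaining contributions, while entries with $[S]_{jk}=0$ are forced to zero in every $A_i$. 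The essential point is that nonnegativity precludes any cancellation; this is exactly where the Metzler-diagonal difficulty of the continuous-time setting is absent.

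With this in hand the chain (a)$\Leftrightarrow$(b)$\Leftrightarrow$(c) is short. By the recalled criterion, statement (a) holds iff $\sum_i A_i$ is Schur stable for every admissible family, that is, iff every $B\in\mathcal{Q}(S)$ has spectral radius less than unity; this is exactly the sign-stability of $S$, namely statement (c). To reach (b), I would use the fact (established in the proof of Theorem~\ref{th:struct2}) that a nonnegative matrix $B$ is Schur stable iff the Metzler matrix $B-I$ is Hurwitz stable, and then Lemma~\ref{lem:Metzler_stab} to rewrite ``$B-I$ Hurwitz'' as ``there exists $v\in\mathbb{R}_{>0}^n$ with $v^T(B-I)<0$''; reading this over all $B=\sum_i A_i\in\mathcal{Q}(S)$ yields precisely statement (b).

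Finally, (c)$\Leftrightarrow$(d)$\Leftrightarrow$(e) follows by a direct specialization of Theorem~\ref{th:struct2} to $A=S$: condition (e) is its linear-programming condition $v^T(\sgn(S)-I)<0$, the triangularization in (d) is its permutation-triangular condition, and (c) is its spectral-radius condition. The main obstacle is not analytic but a matter of careful reading: statement (d) must be interpreted as ``upper-triangular \emph{with zero diagonal}'' (as in Theorem~\ref{th:struct2}), since otherwise $S=\diag(\oplus,0)$ would satisfy a bare upper-triangularity requirement yet fail to be Schur-sign-stable; once this qualification is in force, the equivalence is immediate and the proof is complete.
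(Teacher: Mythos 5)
Your proof follows essentially the same route as the paper's: the paper disposes of this proposition in one line, stating that it ``follows the same lines as the proof of Proposition~\ref{prop:CTD} with the difference that Theorem~\ref{th:struct2} is used instead of Theorem~\ref{th:struct}'' --- i.e., everything is reduced to Theorem~\ref{th:struct2} applied to $S=\sum_{i=0}^N M_i$, combined with the recalled criterion that \eqref{eq:ltipd_DT} is asymptotically stable for all delays iff $\sum_{i=0}^N A_i$ is Schur stable. You make explicit what the paper leaves implicit, notably the identity $\left\{\sum_{i=0}^N A_i:\ A_i\in\mathcal{Q}(M_i)\right\}=\mathcal{Q}(S)$, which is indeed exactly the point at which the nonnegative (discrete-time) case is easier than the Metzler (continuous-time) case since no cancellation can occur. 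Your two interpretive remarks are also correct and worth keeping: sign-stability in statement (c) must be read in the Schur sense introduced before Theorem~\ref{th:struct2}, and statement (d) must be read as ``upper-triangular \emph{with zero diagonal}'' --- Theorem~\ref{th:struct2}, to which the proposition defers, does contain the zero-diagonal clause that the proposition's wording drops, and without it (d) would fail already for the $1\times1$ example $S=\oplus$.

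One step is misstated, though trivially repaired: in the inclusion $\mathcal{Q}(S)\subseteq\left\{\sum_{i=0}^N A_i\right\}$ you realize $B\in\mathcal{Q}(S)$ by assigning the whole value $B_{jk}$ to a single index $i$ with $[M_i]_{jk}=\oplus$ and ``zero to the remaining contributions.'' But membership $A_i\in\mathcal{Q}(M_i)$ forces $[A_i]_{jk}>0$ for \emph{every} $i$ with $[M_i]_{jk}=\oplus$, so those remaining contributions cannot be set to zero. The fix is one line: split the mass, e.g.\ set $[A_i]_{jk}=B_{jk}/m_{jk}$ for each of the $m_{jk}\ge1$ indices $i$ with $[M_i]_{jk}=\oplus$ (and $[A_i]_{jk}=0$ otherwise); these entries are positive and sum to $B_{jk}$, so each $A_i$ lies in $\mathcal{Q}(M_i)$ as required. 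With that correction the argument is complete and matches the paper's intended proof.
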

\begin{proof}
The proof follows the same lines as the proof of Proposition \ref{prop:CTD} with the difference that Theorem \ref{th:struct2} is used instead of Theorem \ref{th:struct}.
\end{proof}}

\subsubsection{Continuous-time switched systems}\label{sec:switched}

Linear positive switched systems have been widely studied in the literature \cite{Gurvits:07,Mason:07,Fornasini:10} as they can represent a wide variety of real-world processes; see e.g. \cite{Ogura:16a,Ogura:16b}. Let us consider, as a starting point, the following linear switched system \cite{Liberzon:03}:
\begin{equation}\label{eq:switched}
  \begin{array}{lcl}
    \dot{x}(t)&=&A_{\sigma(t)}x(t)\\
    x(0)&=&x_0
  \end{array}
\end{equation}
where $x\in\mathbb{R}^n$ and $\sigma:\mathbb{R}_{\ge0}\to\{1,\ldots,N\}$ is a piecewise constant switching signal and $A_i\in\mathbb{R}^{n\times n}$. It is known that \cite{Gurvits:07,Mason:07} the above linear switched system is positive if and only if $A_i\in\mathbb{MR}^n$ for all $i=1,\ldots,N$.

\begin{proposition}
Let $M_i\in\mathbb{MS}^n$, $i=1,\ldots,N$. Then, the following statements are equivalent:
\begin{enumerate}[(a)]
  \item\label{item:switched:1} For all $A_i\in\mathcal{Q}(M_i)$, $i=1,\ldots,N$, the linear switched system \eqref{eq:switched} is asymptotically stable under arbitrary switching i.e. the system \eqref{eq:switched} is structurally stable under arbitrary switching.
  \item\label{item:switched:2} For all $A_i\in\mathcal{Q}(M_i)$, $i=1,\ldots,N$, the linear time-varying positive system
  \begin{equation}
    \dot{x}(t)=A(t)x(t)
  \end{equation}
  where $A(t)\in\mathbf{co}(A_1,\ldots,A_N)$, is robustly asymptotically stable.
    \item \label{item:switched:3} The diagonal elements of the matrices $M_i$, $i=1,\ldots,N$, are negative and the matrix $\textstyle\sum_{i=1}^NM_i$ is sign-stable.
  \item\label{item:switched:4} For all $A_i\in\mathcal{Q}(M_i)$, $i=1,\ldots,N$, there exists a positive definite matrix $Q\in\mathbb{R}^{n\times n}$ such that $\tilde{A}^TQ+Q\tilde{A}$ is negative definite for all $\tilde{A}\in\mathbf{co}(A_1,\ldots,A_N)$.
      \item\label{item:switched:5} For all $A_i\in\mathcal{Q}(M_i)$, $i=1,\ldots,N$, there exists a vector $v\in\mathbb{R}_{>0}^n$ such that $v^T\tilde{A}<0$ for all $\tilde{A}\in\mathbf{co}(A_1,\ldots,A_N)$.
\end{enumerate}
\end{proposition}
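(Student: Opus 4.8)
The plan is to establish the equivalences by combining the stability theory for linear positive switched systems with the structural results already obtained, principally Theorem \ref{th:family} and Theorem \ref{th:common}. The natural backbone is to route all the statements through the sign-stability of the convex hull $\mathbf{co}(M_1,\ldots,M_N)$, for which Theorem \ref{th:family} supplies a clean characterization in terms of the negativity of the diagonal elements together with the sign-stability of $\sum_{i=1}^N M_i$. First I would observe that, since each $M_i\in\mathbb{MS}^n$, positivity of the switched system \eqref{eq:switched} is automatic for every $A_i\in\mathcal{Q}(M_i)$, so the whole discussion takes place within the class of positive systems.

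The central reduction is to connect structural stability under arbitrary switching, statement \eqref{item:switched:1}, to the sign-stability of the convex hull. For a \emph{fixed} tuple $(A_1,\ldots,A_N)$ with $A_i\in\mathcal{Q}(M_i)$, it is a standard fact for positive linear systems that asymptotic stability under arbitrary switching, robust stability of the time-varying system $\dot{x}=A(t)x$ with $A(t)\in\mathbf{co}(A_1,\ldots,A_N)$, the existence of a common quadratic Lyapunov function, and the existence of a common linear copositive Lyapunov function $v^T x$ are all equivalent; this is precisely the content one invokes to pass between \eqref{item:switched:1}, \eqref{item:switched:2}, \eqref{item:switched:4} and \eqref{item:switched:5} \emph{at the level of a single realization}. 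The nontrivial upgrade is that these must hold \emph{for all} realizations simultaneously, which is exactly what Theorem \ref{th:common} delivers: its three equivalent statements are the ``for all $(A_1',\ldots,A_N')$'' versions of the common-quadratic and common-linear Lyapunov conditions, and they are each equivalent to all matrices in $\mathbf{co}(\mathcal{A})$ being sign-stable. Thus \eqref{item:switched:4} $\Leftrightarrow$ \eqref{item:switched:5} $\Leftrightarrow$ (sign-stability of the convex hull) follows directly from Theorem \ref{th:common}, and the latter is equivalent to \eqref{item:switched:3} by Theorem \ref{th:family}, after noting that ``$M_i$ have negative diagonal and $\sum_i M_i$ sign-stable'' is statement (c) of that theorem.

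Concretely, I would argue the cycle of implications as follows. The equivalence \eqref{item:switched:3} $\Leftrightarrow$ \eqref{item:switched:4} $\Leftrightarrow$ \eqref{item:switched:5} is obtained by combining Theorem \ref{th:family} (linking statement \eqref{item:switched:3} to sign-stability of the entire convex hull) with Theorem \ref{th:common} (linking sign-stability of the convex hull to the uniform common-Lyapunov conditions \eqref{item:switched:4} and \eqref{item:switched:5}). For \eqref{item:switched:5} $\Rightarrow$ \eqref{item:switched:2} $\Rightarrow$ \eqref{item:switched:1}, the existence of a common linear copositive Lyapunov function $V(x)=v^T x$ satisfying $v^T\tilde{A}<0$ on the whole hull yields a strict decrease of $V$ along trajectories of both the time-varying system and any switching signal, giving robust and switched asymptotic stability respectively. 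The remaining implication \eqref{item:switched:1} $\Rightarrow$ \eqref{item:switched:3} is the contrapositive step: if statement \eqref{item:switched:3} fails, then by Theorem \ref{th:family} some matrix in $\mathbf{co}(M_1,\ldots,M_N)$ is not sign-stable, so there is a realization and a convex combination $\sum_i\alpha_i A_i$ that is not Hurwitz; freezing the switching signal at that combination (or alternating so as to realize it in the time-average sense) produces a non-decaying trajectory, contradicting \eqref{item:switched:1}.

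The main obstacle I expect is the careful handling of the ``for all realizations'' quantifier together with the well-definedness of the convex hull. One must verify at the outset that $\mathbf{co}(M_1,\ldots,M_N)$ is well-defined, i.e. that the $M_i$ are sign-summable; under statement \eqref{item:switched:3} this is guaranteed since negativity of all diagonal entries forces the diagonal signs to agree, so Proposition \ref{prop:summable} applies, but in the converse directions one should confirm that failure of sign-summability is itself incompatible with structural stability under switching. The other delicate point is the \eqref{item:switched:1} $\Rightarrow$ \eqref{item:switched:3} direction, where one needs that instability of a \emph{single} convex combination of a \emph{single} realization actually destroys stability under arbitrary switching; for positive systems this follows because a non-Hurwitz Metzler convex combination has a nonnegative dominant (Perron--Frobenius) eigenvalue with a nonnegative eigenvector, and holding the switching signal constant at that combination produces a trajectory in the positive orthant that does not converge to the origin.
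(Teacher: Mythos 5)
Your overall architecture matches the paper's: (c) $\Leftrightarrow$ (d) $\Leftrightarrow$ (e) via Theorem \ref{th:family} and Theorem \ref{th:common}, Lyapunov sufficiency for (e) $\Rightarrow$ (b) $\Rightarrow$ (a), and a contrapositive for the necessity direction. However, your opening ``standard fact'' is false as stated: for a \emph{fixed} realization $(A_1,\ldots,A_N)$, asymptotic stability under arbitrary switching and robust stability of the hull are \emph{not} equivalent to the existence of a common quadratic or common linear copositive Lyapunov function. This is precisely the Fornasini--Valcher phenomenon \cite{Fornasini:10} that the paper cites when motivating Theorem \ref{th:common}: Hurwitz stability of the convex hull of Metzler matrices does not imply a common Lyapunov function for a single realization. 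The entire point of Theorem \ref{th:common} is that these equivalences are recovered only under the ``for all realizations in the qualitative classes'' quantifier. Since your final chain of implications routes (d) and (e) through Theorem \ref{th:common} rather than through this per-realization claim, the error is not load-bearing, but it should be excised.

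The genuine gap is in your (a) $\Rightarrow$ (c) step. The switching signal $\sigma$ in \eqref{eq:switched} takes values in $\{1,\ldots,N\}$, so ``freezing the switching signal at that combination'' is not an admissible signal unless the non-Hurwitz matrix is a vertex. Vertex freezing does handle the case where some $M_i$ has a diagonal entry in $\{0,\oplus\}$ (then every $A_i\in\mathcal{Q}(M_i)$ is non-Hurwitz by Lemma \ref{lem:diag}), but not the critical case where every $M_i$ is sign-stable and only $\sum_{i=1}^N M_i$ fails to be: there the unstable matrix is a strict convex combination, and your Perron--Frobenius argument in the last paragraph applies to a matrix that is not a mode of the switched system. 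Your parenthetical ``time-average'' remark is the right idea, but it is exactly the nontrivial equivalence between stability under arbitrary switching over the vertices and robust stability of the polytopic time-varying system; the paper closes this hole by citing \cite[Theorem 3]{Lin:09} to get (a) $\Leftrightarrow$ (b), and then performs the contrapositive at the level of (b), where constant signals at interior points of the hull \emph{are} admissible. If you insist on arguing (a) $\Rightarrow$ (c) directly by chattering, you must supply the averaging estimate and also address a subtlety you currently gloss over: a merely non-Hurwitz (marginally stable) combination need not destabilize fast periodic switching, since the monodromy matrix only approximates $e^{\bar{A}T}$. In the sign-structural setting this is repairable---when the graph of $\sum_i M_i$ has a cycle, one can pick a realization with cycle entries large enough that the Perron--Frobenius eigenvalue of the combination is strictly positive, forcing the monodromy spectral radius above one for fast switching---but none of this is in your proposal, whereas the citation route makes it unnecessary.
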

\begin{proof}
  The proof of the equivalence between the statements \eqref{item:switched:1} and \eqref{item:switched:2}  follows from \cite[Theorem 3]{Lin:09} while the equivalence between the statements  \eqref{item:switched:3},  \eqref{item:switched:4}    and  \eqref{item:switched:5}   follows from Theorem \ref{th:family} and Theorem \ref{th:common}. Clearly, statement  \eqref{item:switched:4}   implies statement \eqref{item:switched:2}  using standard results on switched systems; see e.g. \cite{Liberzon:03}. Finally, statement \eqref{item:switched:2}  implies  \eqref{item:switched:3}   since if the conditions of statement  \eqref{item:switched:3}   does not hold, then there will exist some matrices $A_i\in\mathcal{Q}(M_i)$, $i=1,\ldots,N$,  for which the set $\mathbf{co}(A_1,\ldots,A_N)$ will contain unstable matrices. The proof is completed.
\end{proof}

\subsubsection{Continuous-time impulsive systems}\label{sec:impulsive}

Linear positive impulsive systems have been less studied than positive switched system but some results can be found in \cite{Briat:16c}. Let us consider, as a starting point, the following linear impulsive system:
\begin{equation}\label{eq:impulsive}
  \begin{array}{rcl}
    \dot{x}(t)&=&Ax(t),\ t\ne t_k\\
    x(t_k^+)&=&Jx(t_k),\ k=1,\ldots\\
    x(t_0)&=&x_0
  \end{array}
\end{equation}
where $x\in\mathbb{R}^n$ is the state of the system. The sequence of jumping instants $\{t_k\}_{k=0}^\infty$ is assumed to be increasing and to grow unbounded. It is known \cite{Briat:16c} that this system is positive if and only if $A\in\mathbb{MR}^n$ and $J\in\mathbb{R}^{n\times n}_{\ge0}$.

\begin{proposition}
  Let $M_A\in\mathbb{MS}^n$ and $M_J\in\mathbb{S}_{\ge0}^{n\times n}$. Then, the following statements are equivalent:
  \begin{enumerate}[(a)]
    \item For all $A\in\mathcal{Q}(M_A)$ and all $J\in\mathcal{Q}(J_A)$, there exists a vector $\lambda\in\mathbb{R}^n_{>0}$ such that
    \begin{equation}\label{eq:dksldkmsqdmksmdimpulsive}
      \lambda^TA<0\ \textnormal{and}\ \lambda^T(J-I)<0.
    \end{equation}
    \item $M_A$ has negative diagonal, $M_J$ has zero diagonal and the matrix $M_A+M_J$ is sign-stable.
    %
    %
  \end{enumerate}
  Moreover, when one of the above equivalent statements holds, then for all $A\in\mathcal{Q}(M_A)$ and all $J\in\mathcal{Q}(J_A)$, the linear positive impulsive system is asymptotically stable for any increasing sequence $\{t_k\}_{k=0}^\infty$ such that $t_k\to\infty$ as $k\to\infty$.
\end{proposition}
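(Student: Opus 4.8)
The plan is to read statement (a) as the requirement that, for every representative pair $(A,J)\in\mathcal{Q}(M_A)\times\mathcal{Q}(M_J)$, the two Metzler matrices $A$ and $J-I$ admit a \emph{common} linear copositive Lyapunov function $\lambda^Tx$. Indeed, by Lemma \ref{lem:Metzler_stab}, $\lambda^TA<0$ is precisely the Hurwitz-stability certificate for the Metzler matrix $A$, while $\lambda^T(J-I)<0$ is the same certificate for the Metzler matrix $J-I$ (note that $J\ge0$ makes $J-I$ Metzler). The proof then revolves around translating this simultaneous-stability statement into a sign-condition on $M_A+M_J$ through the convex-hull machinery of Theorem \ref{th:family} and Theorem \ref{th:common}, after first pinning down the diagonals.

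For the implication (a) $\Rightarrow$ (b) I would first extract the two diagonal conditions. Applying (a) to an arbitrary $A\in\mathcal{Q}(M_A)$ forces $A$ to be Hurwitz, hence to have a negative diagonal by Lemma \ref{lem:diag}, so $M_A$ has a negative diagonal. For $M_J$ I would argue by contradiction: if $[M_J]_{kk}=\oplus$ one may choose $J\in\mathcal{Q}(M_J)$ with $J_{kk}>1$, so that $J-I$ has a nonnegative diagonal entry and cannot be Hurwitz; then no $\lambda$ with $\lambda^T(J-I)<0$ exists, contradicting (a). Hence $M_J$ has a zero diagonal, which in particular makes $M_A$ and $M_J$ sign-summable (Proposition \ref{prop:summable}), so $M_A+M_J$ is a genuine Metzler sign-matrix with negative diagonal. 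It remains to show $M_A+M_J$ is sign-stable. From (a), for every $A,J$ the common $\lambda$ yields $\lambda^T(A+J-I)<0$, so every matrix of the form $A+J-I$ is Hurwitz; such matrices are exactly the elements of $\mathcal{Q}(M_A+M_J)$ whose diagonal entries lie strictly below $-1$, and since every element of $\mathcal{Q}(M_A+M_J)$ is a positive scalar multiple of one of these (scaling by $c>0$ preserves both the sign-pattern and Hurwitz-stability), the whole class $\mathcal{Q}(M_A+M_J)$ is Hurwitz, i.e. $M_A+M_J$ is sign-stable.

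The converse (b) $\Rightarrow$ (a) is cleanest through triangular structure. By Theorem \ref{th:struct}, sign-stability of $M_A+M_J$ provides a permutation putting $P^T(M_A+M_J)P$ in upper-triangular form with negative diagonal; because all off-diagonal entries of $M_A$ and $M_J$ are nonnegative, their sum can vanish in a lower-triangular slot only if both summands do, so in this basis $M_A$ and $M_J$ are \emph{simultaneously} upper-triangular, $M_A$ with negative diagonal and $M_J$ with zero diagonal. Consequently, for any $A\in\mathcal{Q}(M_A)$ and $J\in\mathcal{Q}(M_J)$ the matrices $A$ and $J-I$ are simultaneously upper-triangular and Hurwitz, and the recursive construction used in the proof of Theorem \ref{th:common} produces a single $\lambda\in\mathbb{R}^n_{>0}$ with $\lambda^TA<0$ and $\lambda^T(J-I)<0$, which is exactly (a). Equivalently, one may invoke Theorem \ref{th:family}/Theorem \ref{th:common} on the two-element family consisting of $M_A$ and the Metzler sign-matrix $\widehat{M}_J$ carrying a negative diagonal and the off-diagonal pattern of $M_J$, after observing that $M_A+M_J$ and $M_A+\widehat{M}_J$ coincide as sign-matrices under the present diagonal conditions.

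Finally, for the asymptotic-stability claim I would take the $\lambda$ furnished by (a) for the given $A,J$ and use $V(x)=\lambda^Tx$ as a common linear copositive Lyapunov function: along the flow $\dot V=\lambda^TAx<0$ for $x\in\mathbb{R}^n_{\ge0}\setminus\{0\}$ since $\lambda^TA<0$, and at each jump $V(x(t_k^+))=\lambda^TJx(t_k)<\lambda^Tx(t_k)=V(x(t_k))$ since $\lambda^T(J-I)<0$; positivity of the system keeps the trajectory in $\mathbb{R}^n_{\ge0}$ so these strict decreases apply, and the standard argument for linear positive impulsive systems \cite{Briat:16c}, together with $t_k\to\infty$, yields asymptotic stability for every admissible sequence. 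The main obstacle I anticipate is the diagonal bookkeeping of the earlier steps — establishing that $M_J$ must have a zero diagonal, that $M_A+M_J$ coincides as a sign-matrix with $M_A+\widehat{M}_J$, and that the scaling/realization argument genuinely covers all of $\mathcal{Q}(M_A+M_J)$ — rather than the Lyapunov estimate itself.
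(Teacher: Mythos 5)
Your proposal is correct and follows essentially the same route as the paper: diagonal conditions via Lemma \ref{lem:diag} and the Metzler structure of $J-I$, sign-stability of $M_A+M_J$ from the Hurwitz stability of all sums $A+J-I$, simultaneous triangularization via Theorem \ref{th:struct} feeding the common-Lyapunov construction of Theorem \ref{th:common}, and the copositive Lyapunov function $V(x)=\lambda^Tx$ from \cite{Briat:16c} for the final stability claim. If anything, your explicit scaling argument showing that every element of $\mathcal{Q}(M_A+M_J)$ is a positive multiple of some $A+J-I$ fills in a realization step the paper's proof leaves implicit.
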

\bluee{\begin{proof}
The proof that the first statement implies the stability under arbitrary jump sequence can be found in \cite{Briat:16c} and relies on the use of the linear copositive Lyapunov function $V(x)=\lambda^Tx$, $\lambda\in\mathbb{R}^n_{>0}$. Assume that (b) holds, then $M_A$ is sign-stable and $M_A+M_J$ is upper-triangular modulo some simultaneous row/column permutations which both imply that $M_J$ has zero diagonal entries or, equivalently, that for all  $J\in\mathcal{Q}(M_J)$, the matrix $J-I_n$ is Metzler and Hurwitz stable. Since these matrices are upper-triangular, we can finally use Theorem \ref{th:common} to prove that (a) holds. Assume now that (a) holds. Hence, we have that $M_A$ and  $M_J-I_n$ are sign-stable and, hence, $M_A$ and $M_J$ has negative and zero diagonal, respectively. Hence, $M_A$ and $M_J$ are sign-summable and we have that $\lambda^T(A+J-I_n)<0$ for all $A\in\mathcal{Q}(M_A),J\in\mathcal{Q}(M_J)$. This implies that $M_A+M_J-I_n$ is sign-stable or, equivalently, that it is upper-triangular modulo some simultaneous row/column permutations, which is equivalent to saying that $M_A+M_J$ is sign-stable.
\end{proof}}

\subsection{Nonlinear positive systems}\label{sec:nlps}

Let us consider the following class of nonlinear positive systems that arises, among others, in reaction network theory
\begin{equation}\label{eq:nlp}
\begin{array}{lcl}
    \dot{x}(t)&=&Ax(t)+Bf(x(t))+b\\
    x(0)&=&x_0
\end{array}
\end{equation}
where $x\in\mathbb{R}_{\ge0}^n$, $f:\mathbb{R}_{\ge0}^n\to\mathbb{R}_{\ge0}^\ell$, $b\in\mathbb{R}^n_{\ge0}$, $A\in\mathbb{R}^{n\times n}$ and $B\in\mathbb{Z}^{n\times\ell}$.
\begin{proposition}
  Let $A\in\mathbb{MR}^{n}$ and assume that there exist a $v\in\mathbb{R}_{>0}^n$ and an $\eps>0$ such that $v^TA\le-\eps v^T$ and $v^TB=0$. Then, for all $x_0\in\mathbb{R}_{\ge0}^n$, the trajectories of the system \eqref{eq:nlp} are bounded and converge to the compact set
  \begin{equation}
    \mathscr{S}:=\{x\in\mathbb{R}_{\ge0}^n:V(x)\le v^Tb/\eps\},
 \end{equation}
    i.e. $\mathscr{S}$ is forward-invariant and attractive.
\end{proposition}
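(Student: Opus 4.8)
The plan is to use the linear copositive Lyapunov function $V(x)=v^Tx$, whose positivity on $\mathbb{R}_{\ge0}^n$ is guaranteed by $v\in\mathbb{R}_{>0}^n$. Since \eqref{eq:nlp} belongs to the class of positive systems, every trajectory starting in $\mathbb{R}_{\ge0}^n$ remains in $\mathbb{R}_{\ge0}^n$, and this nonnegativity is precisely what allows the algebraic inequalities on $v$ to be turned into bounds on $V$. First I would compute the derivative of $V$ along the solutions of \eqref{eq:nlp}:
\[
\dot{V}(x(t))=v^T\dot{x}(t)=v^TAx(t)+v^TBf(x(t))+v^Tb.
\]
The assumption $v^TB=0$ annihilates the nonlinear term $v^TBf(x(t))$ regardless of the value of $f$, which is the crucial point: it removes the need for any quantitative information on $f$ beyond $f(\mathbb{R}_{\ge0}^n)\subseteq\mathbb{R}_{\ge0}^\ell$. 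Using then $v^TA\le-\eps v^T$ together with $x(t)\ge0$ gives $v^TAx(t)=\sum_j(v^TA)_j[x(t)]_j\le-\eps\sum_jv_j[x(t)]_j=-\eps V(x(t))$, so that
\[
\dot{V}(x(t))\le-\eps V(x(t))+v^Tb.
\]

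Next I would integrate this differential inequality by a comparison argument. Comparing $V(x(t))$ with the solution $w(t)$ of the scalar linear ODE $\dot{w}=-\eps w+v^Tb$ with $w(0)=V(x_0)$ (equivalently, multiplying by $e^{\eps t}$ and integrating) yields the explicit estimate
\[
V(x(t))\le e^{-\eps t}V(x_0)+\frac{v^Tb}{\eps}\left(1-e^{-\eps t}\right),\quad t\ge0.
\]
Three conclusions then follow. For boundedness, since $V(x)=v^Tx\ge(\min_i v_i)\,\mathds{1}_n^Tx$ on $\mathbb{R}_{\ge0}^n$ and the right-hand side above never exceeds $\max\{V(x_0),v^Tb/\eps\}$, the trajectory stays in a bounded subset of $\mathbb{R}_{\ge0}^n$. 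For forward invariance of $\mathscr{S}$, if $V(x_0)\le v^Tb/\eps$ then the right-hand side is itself $\le v^Tb/\eps$ for all $t$, hence $V(x(t))\le v^Tb/\eps$ and $x(t)\in\mathscr{S}$. For attractivity, letting $t\to\infty$ gives $\limsup_{t\to\infty}V(x(t))\le v^Tb/\eps$, so every trajectory converges to the sublevel set $\mathscr{S}$, which is compact because $V$ is coercive on $\mathbb{R}_{\ge0}^n$.

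The main obstacles are well-posedness and positivity, both structural rather than computational. One must ensure that solutions exist globally and do not blow up in finite time; this is handled by the a priori bound on $V$, which confines each trajectory to a compact subset of $\mathbb{R}_{\ge0}^n$ and hence, under the standing assumption that $f$ is (locally Lipschitz) continuous, rules out finite escape time. The more delicate conceptual point is that the decay estimate $v^TAx\le-\eps V(x)$ is valid \emph{only} because $x(t)\ge0$; the entire argument therefore rests on the invariance of $\mathbb{R}_{\ge0}^n$ under \eqref{eq:nlp}, which is the defining feature of this class of positive systems and which I would invoke at the outset before differentiating $V$. Once positivity and global existence are secured, the remaining Lyapunov and comparison computations above are routine.
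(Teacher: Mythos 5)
Your proposal is correct and follows essentially the same route as the paper: the linear copositive Lyapunov function $V(x)=v^Tx$, the cancellation $v^TBf(x)=0$, the bound $\dot{V}(x)\le-\eps V(x)+v^Tb$ (using $x(t)\ge0$), and integration to $V(x(t))\le e^{-\eps t}V(x_0)+(1-e^{-\eps t})v^Tb/\eps$. Your additional remarks on invariance of the nonnegative orthant, global existence, and compactness of the sublevel set make explicit points the paper leaves implicit, but they do not change the argument.
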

\begin{proof}
  Let us consider the linear copositive Lyapunov function $V(x)=v^Tx$, $v\in\mathbb{R}_{>0}^n$. The derivative of this function along the solutions of \eqref{eq:nlp} is given by $\dot{V}(x)=v^TAx+v^TBf(x)+v^Tb$. Assuming the conditions of the theorem hold, then we have that $\dot{V}(x)\le -\eps V(x)+v^Tb$ and, therefore, that $V(x(t))\le e^{-\eps t}V(x_0)+(1-e^{-\eps t})v^Tb/\eps$. Hence, for all $x_0\notin\mathscr{S}$, the trajectories of the system \eqref{eq:nlp} converge to the compact set which proves the result.
\end{proof}
We can now derive the following structural version of the above result:
\begin{proposition}
  Let $M\in\mathbb{MS}^{n}$ and suppose further that
  \begin{enumerate}[(a)]
    \item $M$ is sign-stable,
    \item $Z:=(M^{-1}B)^T\in\mathbb{S}^{\ell\times n}$ and
    \item $Z$ is an $L^+$-matrix.
  \end{enumerate}
   Then, for all $A\in\mathcal{Q}(M)$, there exist a $v\in\mathbb{R}_{>0}^n$ and an $\eps>0$ such that $v^TA\le-\eps v^T$ and $v^TB=0$. Therefore, the trajectories of the system \eqref{eq:nlp} are structurally bounded and structurally converge to a compact set inside the nonnegative orthant.
\end{proposition}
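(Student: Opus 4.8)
The plan is to recognize that hypotheses (a)–(c) are exactly the assumptions needed to invoke Proposition \ref{prop:KerB} with the sign-matrix $M$ playing the role of $A$ there, and then to feed the resulting vector into the preceding deterministic proposition.

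First I would use hypothesis (a) together with Theorem \ref{th:inverse} to note that the inverse sign-matrix $M^{-1}$ is well-defined, nonpositive, and has negative diagonal entries, so that $Z=(M^{-1}B)^T=B^TM^{-T}$ is a genuine object. I would then argue that its sign pattern is uniquely determined: hypothesis (b) states precisely that $Z\in\mathbb{S}^{\ell\times n}$, i.e. that $Z$ carries no sign-indefinite entry $\odot$, so the sign-qualitative class $\mathcal{SQ}(B^TM^{-T})=\mathcal{SQ}(Z)$ collapses to the singleton $\{Z\}$. With this singleton in hand, hypothesis (c)—that $Z$ is an $L^+$-matrix—is literally the statement ``all matrices in $\mathcal{SQ}(B^TM^{-T})$ are $L^+$-matrices,'' which is condition (a) of Proposition \ref{prop:KerB}.

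Applying that proposition then yields that $M$ is Ker$_+(B)$-sign-stable; that is, for every $A\in\mathcal{Q}(M)$ there exists a $v\in\mathbb{R}_{>0}^n$ with $v^TA<0$ and $v^TB=0$. Since $v>0$ and every component of $v^TA$ is strictly negative, I would set $\eps:=\min_i\{-(v^TA)_i/v_i\}>0$, which gives $v^TA\le-\eps v^T$ while retaining $v^TB=0$. Finally, for each fixed $A\in\mathcal{Q}(M)\subset\mathbb{MR}^n$ the pair $(v,\eps)$ just constructed satisfies exactly the hypotheses of the preceding proposition, whose conclusion—boundedness of all trajectories and convergence to the compact set $\mathscr{S}$—therefore holds for every realization $A$; this is the asserted structural boundedness and structural convergence.

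The only genuine subtlety, and the step I would be most careful about, is the reduction of the $L^+$ requirement to a single matrix. This rests on the chain of inclusions $\mathcal{Q}(M)^{-1}B\subseteq\mathcal{Q}(M^{-1})B\subseteq\mathcal{Q}(\mathcal{SQ}(M^{-1}B))$ already used inside Proposition \ref{prop:KerB}, together with hypothesis (b) forcing $\mathcal{SQ}(M^{-1}B)$ to be a single sign-pattern, so that verifying the $L^+$ property on all of $\mathcal{SQ}(B^TM^{-T})$ amounts to verifying it for $Z$ alone. Everything else is the routine $\eps$-tightening and a citation of the earlier deterministic result.
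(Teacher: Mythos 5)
Your proposal is correct and follows exactly the route the paper intends (the paper states this proposition without an explicit proof, but its evident argument is the same chain: hypothesis (b) collapses $\mathcal{SQ}(B^TM^{-T})$ to the singleton $\{Z\}$, hypothesis (c) then activates Proposition \ref{prop:KerB} to give Ker$_+(B)$-sign-stability, and the strict inequality $v^TA<0$ is tightened to $v^TA\le-\eps v^T$ before invoking the preceding deterministic proposition — precisely the pattern also used for Proposition \ref{pro:strctergo}). Your handling of the one subtle step, the reduction of the $L^+$ requirement to the single matrix $Z$ via the inclusions $\mathcal{Q}(M)^{-1}B\subseteq\mathcal{Q}(M^{-1})B\subseteq\mathcal{Q}(\mathcal{SQ}(M^{-1}B))$, is exactly right.
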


\subsection{Ergodicity of stochastic reaction networks}\label{sec:ergo}

\blue{Reaction networks \cite{Horn:72,Feinberg:72,Anderson:15} is a powerful modeling paradigm for representing processes arising, for instance, in biology \cite{Anderson:15,Briat:13i,Briat:15e}. We consider here a stochastic reaction network denoted by $(\Xz,\mathcal{R})$ involving $d$ molecular species $\X{1},\ldots,\X{d}$ that interact through $K$ reaction channels $\mathcal{R}_1,\ldots,\mathcal{R}_K$ defined as
\begin{equation}
 \mathcal{R}_k:\ \sum_{i=1}^d\zeta_{k,i}^l\X{i}\rarrow{\rho_k}  \sum_{i=1}^d\zeta_{k,i}^r\X{i},\ k=1,\ldots,K
\end{equation}
where $\rho_k\in\mathbb{R}_{>0}$ is the reaction rate parameter and $\zeta_{k,i}^l,\zeta_{k,i}^r\in\mathbb{Z}_{\ge0}$. Each reaction is additionally described by a stoichiometric vector and a propensity function. The stoichiometric vector of reaction  $\mathcal{R}_k$ is given by $\zeta_k:=\zeta_k^r-\zeta_k^l\in\mathbb{Z}^d$ where $\zeta_k^r=\col(\zeta_{k,1}^r,\ldots,\zeta_{k,d}^r)$ and $\zeta_k^l=\col(\zeta_{k,1}^l,\ldots,\zeta_{k,d}^l)$. In this regard, when the reaction $\mathcal{R}_k$ fires, the state jumps from $x$ to $x+\zeta_k$. We define the stoichiometry matrix $S\in\mathbb{Z}^{d\times K}$ as $S:=\begin{bmatrix}
  \zeta_1\ldots\zeta_K
\end{bmatrix}$. When  the kinetics is  mass-action, the propensity function of reaction $\mathcal{R}_k$ is given by  $\textstyle\lambda_k(x)=\rho_k\prod_{i=1}^d\frac{x_i!}{(x_i-\zeta_{n,i}^l)!}$ and is such that  $\lambda_k(x)=0$ if $x\in\mathbb{Z}_{\ge0}^d$ and $x+\zeta_k\notin\mathbb{Z}_{\ge0}^d$. We restrict ourselves here to reaction networks involving at most bimolecular reactions. That is, the propensities functions are at most polynomials of degree two and, therefore, we can write the propensity vector as $\lambda(x)=\col(w_0,Wx,Y(x))$ where $w_0\in\mathbb{R}^{n_0}_{\ge0}$, $Wx\in\mathbb{R}^{n_u}_{\ge0}$ and $Y(x)\in\mathbb{R}^{n_b}_{\ge0}$ are the propensity vectors associated the zeroth-, first- and second-order reactions, respectively. According to this structure, the stoichiometric matrix is decomposed as
 $S=:\begin{bmatrix}S_0 &
  S_u & S_b
\end{bmatrix}$. Under the well-mixed assumption, this network can be described by a continuous-time Markov process $(X_1(t),\ldots,X_d(t))_{t\ge0}$ with state-space $\mathbb{Z}_{\ge0}^d$; see e.g. \cite{Anderson:15}. A fundamental result from \cite{Meyn:93} states that the ergodicity of an irreducible\footnote{The state-space of the Markov process is said to be irreducible if any state can be reached from any other state with positive probability. When the state-space is finite, then this reduces to the irreducibility of the transition-rate matrix. A method for proving the irreducibility of the state-space in the infinite case has been proposed in \cite{Gupta:13}.} continuous-time Markov process can be checked using a Lyapunov-like condition, referred to as a Foster-Lyapunov condition. This has led to the following result:

\begin{theorem}[\cite{Briat:13i}]
  Let us consider an irreducible\footnote{Computationally tractable conditions for checking the irreducibility of reaction networks are provided in \cite{Gupta:13}.} bimolecular reaction network $(\Xz,\mathcal{R})$ and define $A:=S_uW$. Assume that there exists a vector $v\in\mathbb{R}^d_{>0}$ such that $v^TS_b=0$ and $v^TA<0$. Then, the  reaction network $(\Xz,\mathcal{R})$ is exponentially ergodic and all the moments are bounded and converging.
\end{theorem}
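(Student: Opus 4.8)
The plan is to exhibit a linear Foster--Lyapunov function and verify the geometric drift inequality that, through the Meyn--Tweedie theory cited as \cite{Meyn:93}, delivers exponential ergodicity. First I would take the linear copositive candidate $V(x)=v^Tx$, where $v\in\mathbb{R}^d_{>0}$ is the vector furnished by the hypothesis. Writing $\mathcal{L}$ for the (extended) generator of the continuous-time Markov process, whose jumps send $x$ to $x+\zeta_k$ at rate $\lambda_k(x)$, the linearity of $V$ collapses each increment $V(x+\zeta_k)-V(x)$ to the constant $v^T\zeta_k$, so that
\[
(\mathcal{L}V)(x)=\sum_{k=1}^K\lambda_k(x)\big[V(x+\zeta_k)-V(x)\big]=\sum_{k=1}^K\lambda_k(x)\,v^T\zeta_k=v^TS\lambda(x),
\]
where I have used that the columns of $S$ are the stoichiometric vectors $\zeta_k$.

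Next I would substitute the block decompositions $S=\begin{bmatrix}S_0 & S_u & S_b\end{bmatrix}$ and $\lambda(x)=\col(w_0,Wx,Y(x))$ and use $A=S_uW$ to obtain
\[
(\mathcal{L}V)(x)=v^TS_0w_0+v^TS_uWx+v^TS_bY(x)=v^TS_0w_0+v^TAx+v^TS_bY(x).
\]
This identity is the conceptual heart of the argument, and it is here that the two hypotheses intervene. The term $v^TS_bY(x)$ is the only one growing quadratically in $x$, since $Y(x)$ collects the second-order (bimolecular) propensities; the constraint $v^TS_b=0$ annihilates it exactly. This is precisely the Ker$_+(S_b)$-type condition of the previous section applied to $B=S_b$: it is what allows a \emph{linear} Lyapunov function to control an otherwise superlinear generator. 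The surviving linear term is tamed by $v^TA<0$: writing $(v^TA)_i=-a_i$ with each $a_i>0$ and setting $\eps:=\min_i a_i/v_i>0$, one has $v^TAx\le-\eps\, v^Tx$ for every $x\in\mathbb{Z}^d_{\ge0}$ because $v>0$. With $c_0:=v^TS_0w_0\ge0$ a finite constant (the zeroth-order stoichiometry being nonnegative), this yields the geometric drift inequality
\[
(\mathcal{L}V)(x)\le-\eps V(x)+c_0\qquad\textnormal{for all }x\in\mathbb{Z}^d_{\ge0}.
\]

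Finally I would close with the Foster--Lyapunov criterion. Because $v>0$, every sublevel set $\{x\in\mathbb{Z}^d_{\ge0}:V(x)\le M\}$ is finite and hence petite for the irreducible process, so the displayed drift condition is of the exact form required by \cite{Meyn:93} to conclude $V$-uniform, i.e.\ exponential, ergodicity; the same inequality propagates to the moments dominated by $V$, giving their boundedness and convergence. I expect the main obstacle to lie not in the algebra above but in the functional-analytic bookkeeping of the unbounded generator on the infinite state space $\mathbb{Z}^d_{\ge0}$: one must check that $V$ belongs to the domain of the extended generator and that the exceptional region where the drift is not strictly negative is contained in a finite (petite) set. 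Both points are resolved by the irreducibility hypothesis together with the finiteness of the sublevel sets of the linear function $V$, which is the technical reason the argument succeeds for this class of at-most-bimolecular networks.
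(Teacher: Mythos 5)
The paper itself offers no proof of this theorem: it is imported verbatim from \cite{Briat:13i}, so the benchmark is the Foster--Lyapunov argument of that reference, which is exactly what you reconstruct. Your computation is correct: with $V(x)=v^Tx$ the generator identity $(\mathcal{L}V)(x)=v^TS\lambda(x)=v^TS_0w_0+v^TAx+v^TS_bY(x)$ holds (the convention $\lambda_k(x)=0$ when $x+\zeta_k\notin\mathbb{Z}_{\ge0}^d$ keeps it valid on the whole state space), the hypothesis $v^TS_b=0$ annihilates the only superlinear term, and your bound $v^TAx\le-\eps\,v^Tx$ with $\eps:=\min_i a_i/v_i$ is sound on the nonnegative orthant. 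The concluding appeal to \cite{Meyn:93} is also handled properly: $v>0$ makes the sublevel sets of $V$ finite, hence petite for the irreducible chain, the drift condition rules out explosion, and exponential ($V$-uniform) ergodicity follows. Up to this point your proof coincides with the cited one.

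The genuine gap is the last claim of the theorem: \emph{all} moments bounded and converging. $V$-uniform ergodicity with the linear $V$ controls only moments dominated by $V$, i.e.\ first-order moments; your sentence that the inequality ``propagates to the moments dominated by $V$'' silently proves less than what is asserted. The fix, which is how the cited proof proceeds, exploits that $v^TS_b=0$ is a componentwise statement: $v^T\zeta_k=0$ for \emph{every} bimolecular reaction $\mathcal{R}_k$, so these reactions leave $v^TX(t)$ unchanged and contribute nothing to the generator applied to any power $V_n(x):=(v^Tx)^n$. Expanding binomially,
\begin{equation*}
(\mathcal{L}V_n)(x)=\sum_{k=1}^K\lambda_k(x)\sum_{j=1}^n\binom{n}{j}\left(v^T\zeta_k\right)^j\left(v^Tx\right)^{n-j},
\end{equation*}
only the zeroth- and first-order reactions survive; their propensities are at most linear, so the $j=1$ term gives $n(v^Tx)^{n-1}\left(v^TS_0w_0+v^TAx\right)\le -n\eps V_n(x)+nc_0V_{n-1}(x)$ while every $j\ge2$ term has total degree at most $n-1$. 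This yields the recursive drift $(\mathcal{L}V_n)(x)\le-c_nV_n(x)+d_n\left(1+V_{n-1}(x)\right)$ for suitable $c_n>0$, $d_n\ge0$, and induction on $n$ (again via \cite{Meyn:93}, with $V_n$ norm-like and its sublevel sets finite) delivers boundedness and convergence of all moments. Note that if you had only used $v^TS_bY(x)=0$ as a scalar cancellation rather than $v^T\zeta_k=0$ columnwise, the degree-$n$ terms $\binom{n}{2}(v^T\zeta_k)^2\lambda_k(x)(v^Tx)^{n-2}$ from bimolecular propensities would not cancel and the induction would fail; this is the one place where the stronger reading of the hypothesis is essential. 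Adding this induction would make your proof match the cited one in full.
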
}

\noindent\bluee{The above result can be made structural the following way:
\begin{proposition}\label{pro:strctergo}
Let $Z\in\mathbb{MS}^{n\times n}$ and suppose, further, that
  \begin{enumerate}[(a)]
    \item the matrix $Z$ is sign-stable,
    \item the matrix $Y:=(Z^{-1}S_b)^T$ is a sign-matrix, and
    \item the matrix $Y$ is an $L^+$-matrix.
  \end{enumerate}
  Then, for all $A\in\mathcal{Q}(Z)$, there exists a $v\in\mathbb{R}_{>0}^n$ such that $v^TA<0$ and $v^TS_b=0$, which implies, in turn, that for all $S_uW=A\in\mathcal{Q}(Z)$, the reaction network $(\Xz,\mathcal{R})$ is exponentially ergodic and all the moments are bounded and converging.
\end{proposition}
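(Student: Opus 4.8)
The plan is to recognize this statement as a direct application of Proposition \ref{prop:KerB}, followed by the Foster-Lyapunov ergodicity criterion of \cite{Briat:13i}. The natural identification is to set $A=Z$ and $B=S_b$ in the Ker$_+(B)$-sign-stability framework, so that hypotheses (a), (b) and (c) line up with the standing hypotheses of Proposition \ref{prop:KerB} (sign-stability of $A$) and its statement (a) (the $L^+$-property of $\mathcal{SQ}(B^TA^{-T})$). Since the proposition is essentially a translation of an earlier result into the reaction-network notation, I expect the argument to be short and free of computation.

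First I would make the key algebraic identification. Because $Z$ is sign-stable, Theorem \ref{th:inverse} guarantees that the sign-inverse $Z^{-1}$ is well-defined, nonpositive, and has negative diagonal. Consequently $B^TA^{-T}=S_b^TZ^{-T}=(Z^{-1}S_b)^T=Y$, so the matrix $Y$ featuring in hypotheses (b) and (c) is exactly the matrix $B^TA^{-T}$ on which Proposition \ref{prop:KerB} operates. One point to flag here is that $A^{-T}$ must be read as the sign-inverse supplied by Theorem \ref{th:inverse}, not as a numerical inverse, and that the full-rank requirement on $S_b$ with $n_b<n$ is part of the standing reaction-network data.

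Next I would dispose of the sign-qualitative class. Hypothesis (b), namely that $Y$ is a (definite) sign-matrix, means that $Y$ contains no sign-indefinite entry $\odot$; by the remark accompanying the definition of $\mathcal{SQ}$, this forces $\mathcal{SQ}(Y)=\{Y\}$. Therefore the clause ``all matrices in $\mathcal{SQ}(B^TA^{-T})$ are $L^+$-matrices'' in statement (a) of Proposition \ref{prop:KerB} collapses to the single requirement that $Y$ itself be an $L^+$-matrix, which is precisely hypothesis (c). With every hypothesis of Proposition \ref{prop:KerB} now verified, the proposition yields that $Z$ is Ker$_+(S_b)$-sign-stable; by definition this is exactly the assertion that for every $A\in\mathcal{Q}(Z)$ there exists a $v\in\mathbb{R}_{>0}^n$ with $v^TA<0$ and $v^TS_b=0$.

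Finally I would close by invoking the ergodicity theorem of \cite{Briat:13i}: writing $A=S_uW\in\mathcal{Q}(Z)$ and feeding in the vector $v$ just produced (which satisfies $v^TS_b=0$ and $v^TA<0$), that theorem delivers exponential ergodicity together with boundedness and convergence of all moments, uniformly over the admissible matrices $A$. The only genuinely delicate part is the bookkeeping around the sign algebra, that is, confirming the identification $B^TA^{-T}=Y$ through the sign-inverse of Theorem \ref{th:inverse} and checking that hypothesis (b) is exactly the condition trivializing $\mathcal{SQ}$ so that Proposition \ref{prop:KerB}(a) reduces to hypothesis (c); once these identifications are secured, the conclusion is immediate.
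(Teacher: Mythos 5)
Your proposal is correct and follows exactly the paper's intended route: the paper states Proposition \ref{pro:strctergo} without a separate proof, treating it as the immediate specialization of Proposition \ref{prop:KerB} with $A=Z$ and $B=S_b$, where hypothesis (b) collapses $\mathcal{SQ}(B^TA^{-T})=\mathcal{SQ}(Y)$ to the single matrix $Y$ so that the $L^+$-condition reduces to hypothesis (c), after which the Foster--Lyapunov theorem of \cite{Briat:13i} delivers exponential ergodicity. Your two flagged caveats --- reading $A^{-T}$ as the sign-inverse furnished by Theorem \ref{th:inverse} and the standing full-rank assumption on $S_b$ with $n_b<n$ --- are precisely the bookkeeping the paper leaves implicit.
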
}

\section*{Bibliography}

\bibliographystyle{unsrtnat}

\end{document}